\newcommand{\ud}{\mathrm{d}}
\newcommand{\ii}{\mathrm{i}}
\newcommand{\R}{\mathbb{R}}
\newcommand{\Rd}{\mathbb{R}^{2}}
\newcommand{\G}{\mathcal{G}}
\newcommand{\la}{\lambda}
\newcommand{\ep}{\varepsilon}
\newcommand{\D}{\mathcal{D}}
\newcommand{\F}{\mathcal{F}}
\newcommand{\C}{\mathbb C}
\newcommand{\Eps}{\mathcal{E}}
\newcommand{\na}{\nabla}
\newcommand{\lap}{\Delta}
\newcommand{\deb}{\rightharpoonup}
\newcommand{\f}[2]{\frac{#1}{#2}}
\newcommand{\scal}[2]{\langle #1,#2\rangle}
\theoremstyle{plain}
\newtheorem{theorem}{Theorem}[section]
\newtheorem{lemma}[theorem]{Lemma}
\newtheorem{corollary}[theorem]{Corollary}
\newtheorem{proposition}[theorem]{Proposition}
\theoremstyle{definition}
\newtheorem{definition}[theorem]{Definition}
\newtheorem{remark}[theorem]{Remark}
\newtheorem*{remark*}{Remark}
\begin{document}


\title[2D NLS ground states with Coulomb potential and point interaction]
{Two dimensional NLS ground states with attractive Coulomb potential and point interaction}

\author[F. Boni]{Filippo Boni}
\address[F. Boni]{Scuola Superiore Meridionale, Largo S. Marcellino, 10, 80138, Napoli, Italy.}
\email{f.boni@ssmeridionale.it}

\author[M.~Gallone]{Matteo Gallone}
\address[M.~Gallone]{Scuola Internazionale Superiore di Studi Avanzati (SISSA), via Bonomea, 265, 34136 Trieste Italy}
\email{matteo.gallone@sissa.it}

\begin{abstract}
	We study the existence and the properties of ground states at fixed mass for a focusing nonlinear Schr\"odinger equation in dimension two with a point interaction, an attractive Coulomb potential and a nonlinearity of power type. We prove that for any negative value of the Coulomb charge, for any positive value of the mass and for any $L^{2}$-subcritical power nonlinearity, such ground states exist and exhibit a logarithmic singularity where the interaction is placed. Moreover, up to multiplication by a phase factor, they are positive, radially symmetric and decreasing. An analogous result is obtained also for minimizers of the action restricted to the Nehari manifold, getting the existence also in the $L^{2}$-critical and supercritical cases.
\end{abstract}

\date{\today}

\subjclass[2000]{35B07, 35B09, 35B25, 47B93, 49J40}
\keywords{}

\thanks{We thank R. Adami, B. Langella and D. Noja for many interesting and fruitful discussions on the project. The work was partially supported by  the INdAM Gnampa 2022 project ``Modelli matematici con singolarità per fenomeni di interazione''. M.G. received financial support from the MIUR-PRIN 2017 project MaQuMA cod. 2017ASFLJR, the European Research Council (ERC) under the European Union’s Horizon 2020 research and innovation program ERC StG MaMBoQ, n.802901. We also thank GNFM, the Italian National Group of Mathematical Physics. M.G. acknowledges Dipartimento di
Matematica ``G. Lagrange'', Politecnico di Torino, where part of this work has been carried on.}

\maketitle



\section{Introduction}
We consider a nonlinear Schr\"odinger equation in dimension two in presence of a point interaction and an attractive Coulomb potential, that can be formally described by the equation
\begin{equation}\label{eq:symm_op}
\ii\partial_{t}\psi=(-\Delta + \nu|x|^{-1} + \alpha \delta_0(x))\psi-|\psi|^{p-2}\psi,\quad \nu<0 \, , \quad p>2 \, .
\end{equation}

There are several reasons for which equation \eqref{eq:symm_op} is relevant for the applications in various contexts of physics and also for the mathematical techniques that have been developed for its study.

From the physical point of view, nonlinear Schr\"odinger equations in dimension two describe, among other phenomena, the dynamics of Bose-Einstein condensates \cite{KayKirkpatrick2011,Jeblick2019,Li2021}. In this context the nonlinear term describes the self-interaction of the ``quantum field'' $\psi$; the Coulomb potential describes the interaction with an external electrostatic potential and the point interaction models the interaction between the field and the source of the potential.

Point interactions, corresponding to the $\delta$ term in \eqref{eq:symm_op}, arise naturally in quantum mechanics when considering the ``Darwin correction'' to the hydrogen atom. This is an effective correction, introduced with heuristic arguments by physicists \cite{Bransden2003-yf}, taking into account an effective smearing out of the wave function of the quantum system due to the ``zitterbewegung'', the phenomenon of fast oscillations of the electron close to the nucleus.

From the mathematical point of view, nonlinear models with point interactions have been studied first in dimension one, and only recently in dimension two and three. In dimension one there are several different types of point interactions, since the Laplacian as a densely defined and symmetric operator $C^\infty_0(\mathbb{R}\setminus\{0\})\subset L^2(\mathbb{R})$ admits a four-parameter family of boundary conditions. Some of them have been considered in the nonlinear setting: delta \cite{Cao-Malomed-95,Fuk-Ohta-Ozawa-08,Holm-Marz-Zwo-07}, delta prime \cite{Adami-Noja-Visc-13} and F\"{u}l\"{o}p Tsutsui conditions \cite{Adami-Rev,Flp-00}. In the two and three dimensional case, where the deficiency index of the Laplacian on $C^\infty_0(\mathbb{R}^d\setminus\{0\}) \subset L^2(\mathbb{R}^d)$ is one, there is only one type of point interaction. In recent years some results on nonlinear Schr\"odinger equations in presence of point interactions in dimension larger than one have been obtained, concerning existence and stability of ground states \cite{ABCT-3d,Adami-Boni-Carlone-Tentarelli-2022,FGI-22,GMS-22}, global well–posedness \cite{Cac-Finco-Noja-21}, blow–up phenomena \cite{Finco-noja-22} and scattering \cite{Cac-Finco-Noja-22}.

Nonlinear Schr\"odinger equation with a multiplicative Coulomb potential has been also considered in the past years, with the aim of studying the existence, uniqueness and stability of positive solutions \cite{Benguria-86, Dinh-2021,Li-20} and global well-posedness, blow up and scattering \cite{Miao-22}. 

The present paper fits in this line of research, proposing a model that combines a point interaction and an attractive Coulomb potential. For this problem, we prove the existence of ground states and we study their properties. We prove that, up to multiplication of a phase, ground states are positive, rotationally symmetric and radially decreasing.

The existence of ground states for any value of the mass, as well as their properties are expected results.
Indeed, the same results hold both for the NLS with delta interaction \cite{Adami-Boni-Carlone-Tentarelli-2022} and for the NLS with external attractive Coulomb potential \cite{Dinh-2021}. Nevertheless, this does not diminish their relevance, since the simultaneous presence of a point interaction and a Coulomb potential makes the problem technically harder due to the form of the Green's function \eqref{G-lanu}, especially in the proof of inequalities of the Gagliardo-Nirenberg type (see Lemma \ref{lem:GNnu<0}) or in the proof of Theorem \ref{ex-gs-mu}.

Furthermore, it would be also interesting to investigate whether such ground state solutions are orbitally stable or not, but, in order to obtain similar results by using \cite{GSS-87} or \cite{Caze-Lions-82}, a well-posedness result is needed, and this is not straightforward since it requires Strichartz estimates for the operator $-\Delta+\nu|x|^{-1}+\alpha \delta_0$.

Finally, it would be interesting to extend the analysis of ground states in presence of a repulsive Coulomb potential, i.e. when $\nu>0$. In such a situation, the competition between the point interaction, that in two and three dimensions can be considered always ``attractive'' (see \cite{Adami-Boni-Carlone-Tentarelli-2022} and \cite{ABCT-3d}), and the repulsive Coulomb potential could affect both the existence results and the qualitative properties of ground states, but new techniques are needed to study this problem.

%
%
%
%
%

\subsection*{Notation.} 
In the following, we will denote $\|u\|_{p}:=\|u\|_{L^{p}(\R^{2})}$, for $p>1$.

\subsection{Setting and main results}
In this section, we define rigorously the problem and state the main results of the paper. 

Let us stress that the expression \eqref{eq:symm_op} is formal and a rigorous definition to $-\Delta + \nu|x|^{-1} + \alpha \delta_0(x)$ can be given through the theory of self-adjoint extensions of symmetric operators. In particular, we are interested in the self-adjoint realisations in $L^2(\mathbb{R}^2)$ of the densely defined symmetric operator
\begin{equation}\label{eq:sym_vero}
	H_{\nu}\;=\; - \Delta + \nu|x|^{-1} \, \qquad \text{on} \qquad \mathcal{D}(H_{\nu})\;=\; C_0^\infty(\mathbb{R}^2 \setminus\{0\}) \, .
\end{equation}

The problem of self-adjoint realisations for this model, as well as for the related one- and three-dimensional models has attained a certain interest in the literature in the last decade (see e.g. \cite{2010-Duclos-2D-Delta-Coulomb,2009-deOliveira-Verri}), where the classification of self-adjoint operators was obtained using von Neumann extension theory. Since we are ultimately interested in the applications to non-linear problems and the approach will be mainly variational, we need the classification of the associated quadratic forms, the latter being naturally obtained in the framework of the Kre\u{\i}n-Vi\v{s}ik-Birman extension theory (see \cite{GMO-KVB2017} for a recent review or \cite{Gallone-Michelangeli-Book} for a comprehensive treatment). It is thus natural to re-obtain the results of the above works in this context, and to complement with the classification of quadratic forms (see Appendix \ref{app:self-adj} for a detailed discussion of this topic). 

Let us define for $\lambda>0$ and $\nu>-\sqrt{\la}$ the Green's function of $-\Delta+\f{\nu}{|x|}+\la$
\begin{equation}
\label{G-lanu}
	\G_{\lambda,\nu}(|x|) \;:=\; \frac{\Gamma\left({\textstyle \frac{1}{2}+\frac{\nu}{2 \sqrt{\lambda}}} \right)}{2 \pi} e^{-\sqrt{\lambda} |x|}  U_{\frac{1}{2}+\frac{\nu}{2 \sqrt{\lambda}},1}\left(2 \sqrt{\lambda} |x|\right),
\end{equation}
and the quantity
\begin{equation}\label{eq:thetaDef}
	\theta_{\lambda,\nu} \;:=\;\frac{1}{2 \pi} \left(\psi\left({\textstyle \frac{1}{2}+\frac{\nu}{2 \sqrt{\lambda}}}\right)+ 2 \gamma + \ln( 2 \sqrt{\lambda}) \right)\bigg) \,,
\end{equation}
where $U_{a,b}(z)$ is the \emph{Tricomi function} defined in \cite[Eq. 13.1.6]{Abramowitz-Stegun-1964} and $\psi$ denotes the digamma function, that is $\psi(x)=\Gamma'(x)/\Gamma(x)$ with $\Gamma$ being the gamma function. 

The next proposition collects some results about the operator and the quadratic form of the self-adjoint realisations of \eqref{eq:sym_vero}.


\begin{proposition}~\label{prop:Classification} For any $\nu\in \R$, the symmetric operator $H_{\nu}$ defined in \eqref{eq:sym_vero} admits a one-parameter family of self-adjoint extensions $\{H_{\nu,\alpha}\}_{\alpha \in \mathbb{R} \cup \{\infty\}}$. For any $\alpha\in \R$:
	\begin{itemize}
	\item[(i)] The domain of self-adjointness is
	\begin{equation}\label{eq:Domain2DExtension-OP}
			\mathcal{D}(H_{\nu,\alpha}) \;=\;\left \{ u \in \mathcal{D}(H^*) \, | \, u=\phi_{\la}+q \G_{\lambda,\nu} \, , \, \phi_{\la} \in\mathcal{D}(H_F), \phi_{\la}(0)=q ( \alpha+ \theta_{\lambda,\nu}) \right\},
\end{equation}
	with $H^{*}$ and $H_F$ being the adjoint and the Friedrichs extension of $H_{\nu}$.
	\item[(ii)] The domain of the quadratic form is given by
	\begin{equation}
	\label{dom-form}
		\D_{\nu}:=\mathcal{D}(Q_{\nu}) \;=\; H^1(\mathbb{R}^2) \;+\; \mathrm{span}_{\mathbb{C}}\{\G_{\lambda,\nu}\}
	\end{equation}
	and the action on $u=\phi_\lambda+q \G_{\lambda,\nu}$  by
	\begin{equation}\label{eq:FormValue}
		Q_{\nu}(u) \;=\;  \Vert \nabla \phi_\lambda \Vert_2^2+\nu \Vert |x|^{-1/2} \phi_\lambda \Vert_2^2 + \lambda (\Vert \phi_\lambda \Vert_2^2-\Vert u \Vert_2^2)+ (\alpha+\theta_{\lambda,\nu})|q|^2
	\end{equation}
	\item[(iii)] If $\nu<0$, then $H_\nu$ has infinitely many negative eigenvalues accumulating at $0$. Moreover, for any $\alpha \in \mathbb{R}$, there exists only one negative eigenvalue $-\omega_\nu$ of $H_{\nu,\alpha}$ such that $-\omega_{\nu}<-\nu^{2}$. $\omega_\nu$ is the unique solution in the variable $\la>\nu^{2}$ of the equation 
	\begin{equation*}
	\alpha+\theta_{\la,\nu}=0
	\end{equation*}
satisfying 
	\begin{equation*}
	-\omega_{\nu}=\inf_{v\in \D_{\nu}\setminus\{0\}}\f{Q_{\nu}(v)}{\|v\|_{2}^{2}}<-\nu^{2}=\inf_{v\in H^{1}(\R^{2})\setminus\{0\}}\f{Q_{\nu}(v)}{\|v\|_{2}^{2}}.
	\end{equation*}
		\end{itemize}	
\end{proposition}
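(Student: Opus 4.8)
The plan is to classify the self-adjoint extensions of $H_{\nu}$ through the Kre\u{\i}n--Vi\v{s}ik--Birman (KVB) framework, which delivers simultaneously the operator domains, the quadratic forms, and the resolvent formula on which the spectral analysis rests. First I would reduce everything to the radial sector. Decomposing $L^{2}(\Rd)$ into angular-momentum channels and conjugating each radial part by $f\mapsto\sqrt{r}\,f$, the operator $H_{\nu}$ becomes on $L^{2}(\R_{+},\mathrm{d}r)$ the half-line operator $-\tfrac{\mathrm{d}^2}{\mathrm{d}r^2}+\tfrac{m^2-1/4}{r^2}+\tfrac{\nu}{r}$. For $|m|\geq 1$ the centrifugal coefficient $m^{2}-1/4\geq 3/4$ forces the limit-point case at $r=0$, so those channels are essentially self-adjoint; only the $m=0$ channel, with coefficient $-1/4$, is limit-circle at the origin. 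Solving the associated confluent hypergeometric ODE $(-\lap+\nu/|x|+\la)u=0$ and selecting the unique solution square-integrable both at infinity and at the origin (the logarithmic singularity being $L^{2}$ in two dimensions) gives a one-dimensional deficiency space spanned by $\G_{\la,\nu}$. Hence the deficiency indices are $(1,1)$ and von Neumann's theorem produces the one-parameter family $\{H_{\nu,\alpha}\}$.

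Next I would pin down the behaviour of the Green's function near the origin. Expanding the Tricomi function $U_{a,1}(z)$ for small $z$ produces the two-dimensional logarithmic head together with a finite constant carrying the digamma function, namely $\G_{\la,\nu}(|x|)=-\frac{1}{2\pi}\ln|x|+\theta_{\la,\nu}+o(1)$ as $x\to 0$, which is precisely how the quantity $\theta_{\la,\nu}$ of \eqref{eq:thetaDef} enters. With this, part (i) follows from the KVB decomposition $u=\phi_{\la}+q\,\G_{\la,\nu}$ with $\phi_{\la}\in\mathcal{D}(H_F)$, whose regular part has a well-defined value $\phi_{\la}(0)$ at the origin: the extensions are parametrised by the boundary condition coupling the charge $q$ to this value, $\phi_{\la}(0)=q(\alpha+\theta_{\la,\nu})$, with $\alpha=\infty$ recovering the Friedrichs case $q=0$, as in \eqref{eq:Domain2DExtension-OP}. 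Part (ii) is then the KVB form representation: evaluating $Q_{\nu}(u)=\langle u,(H_{\nu,\alpha}+\la)u\rangle-\la\|u\|_{2}^{2}$ on the decomposition, the Friedrichs part contributes $\|\na\phi_{\la}\|_{2}^{2}+\nu\||x|^{-1/2}\phi_{\la}\|_{2}^{2}+\la(\|\phi_{\la}\|_{2}^{2}-\|u\|_{2}^{2})$, while the singular self-energy of $\G_{\la,\nu}$ combined with the boundary condition collapses to $(\alpha+\theta_{\la,\nu})|q|^{2}$, yielding \eqref{eq:FormValue}; the form domain is read off as $H^{1}(\Rd)+\mathrm{span}_{\C}\{\G_{\la,\nu}\}$.

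For part (iii) I would combine the Kre\u{\i}n resolvent formula with monotonicity of $\theta_{\la,\nu}$. Since $H_{\nu,\alpha}$ and $H_F$ differ by a rank-one term in resolvent, they share the essential spectrum $[0,\infty)$, and the long-range attractive Coulomb tail ($\nu<0$) then yields, as for the hydrogen atom, infinitely many negative eigenvalues accumulating at $0$. The genuinely new feature is the single eigenvalue below the Friedrichs bottom $-\nu^{2}$: in the regime $\la>\nu^{2}$ the Friedrichs resolvent is regular, so eigenvalues of $H_{\nu,\alpha}$ correspond exactly to poles of the Kre\u{\i}n correction $(\alpha+\theta_{\la,\nu})^{-1}\scal{\G_{\la,\nu}}{\cdot}\G_{\la,\nu}$, i.e. to solutions of $\alpha+\theta_{\la,\nu}=0$. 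I would then show that $\la\mapsto\theta_{\la,\nu}$ is continuous and strictly increasing on $(\nu^{2},\infty)$ --- differentiating \eqref{eq:thetaDef} gives $\frac{1}{4\pi\la}\big(1-\frac{\nu}{2\sqrt{\la}}\,\psi'(\tfrac12+\tfrac{\nu}{2\sqrt{\la}})\big)>0$ since $\nu<0$ and the trigamma $\psi'>0$ --- with limits $-\infty$ as $\la\downarrow\nu^{2}$ (the digamma pole at $0$) and $+\infty$ as $\la\to\infty$. Thus for every $\alpha\in\R$ there is a unique root $\omega_{\nu}>\nu^{2}$, giving exactly one eigenvalue $-\omega_{\nu}<-\nu^{2}$, and since it lies below the entire Friedrichs spectrum it is the bottom of $\sigma(H_{\nu,\alpha})$, whence the stated variational characterisation.

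The main obstacle I anticipate is the special-function input: correctly identifying the $L^{2}$ solution of the Coulomb ODE as the Tricomi branch $\G_{\la,\nu}$ and extracting its small-argument expansion, because the coincidence of its logarithmic singularity with the two-dimensional free-Laplacian head, together with the exact digamma constant $\theta_{\la,\nu}$, is exactly what makes the KVB boundary condition and the form \eqref{eq:FormValue} close in elementary terms. By comparison the rank-one resolvent identity, the stability of the essential spectrum, and the monotonicity argument are routine.
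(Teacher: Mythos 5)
Your plan follows essentially the same route as the paper: reduction to the $s$-wave via the angular decomposition and the Weyl limit-point/limit-circle criterion, identification of the deficiency element with the Tricomi/Whittaker solution, Kre\u{\i}n--Vi\v{s}ik--Birman parametrisation of the extensions and of their forms, and the monotonicity of $\la\mapsto\theta_{\la,\nu}$ (your derivative $\frac{1}{4\pi\la}\bigl(1-\frac{\nu}{2\sqrt{\la}}\psi'(\tfrac12+\tfrac{\nu}{2\sqrt{\la}})\bigr)$ agrees with the paper's $\frac{1}{8\pi\la^{3/2}}\bigl(|\nu|\psi'(\cdot)+2\sqrt{\la}\bigr)$) together with the limits $\mp\infty$ at the endpoints of $(\nu^2,+\infty)$. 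The one genuinely different ingredient is your treatment of the infinitely many negative eigenvalues: you invoke a hydrogen-type variational argument (long-range attractive tail below the essential spectrum, which is stable under the rank-one resolvent perturbation), whereas the paper reads them off from the equation $\alpha+\theta_{-E,\nu}=0$ on $(−\nu^2,0)$, where $\theta$ has infinitely many branches separated by the digamma poles; the paper's route is sharper, as it localises the eigenvalues between the Friedrichs $s$-wave levels $-\nu^2/(2n+1)^2$ and yields that formula as a by-product, but your argument is also valid.

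There is, however, one concrete error you should fix: with the paper's definitions \eqref{G-lanu} and \eqref{eq:thetaDef}, the small-distance expansion is
\begin{equation*}
\G_{\la,\nu}(|x|)\;=\;-\frac{1}{2\pi}\ln|x|\;-\;\theta_{\la,\nu}\;+\;o(1),\qquad |x|\to 0,
\end{equation*}
i.e.\ the constant term is $-\theta_{\la,\nu}$, not $+\theta_{\la,\nu}$ as you wrote. This sign is not cosmetic: the $\la$-independent boundary condition is ``constant term of $u$ $=\alpha q$'', which with the correct expansion gives $\phi_{\la}(0)-q\theta_{\la,\nu}=\alpha q$, i.e.\ precisely $\phi_{\la}(0)=q(\alpha+\theta_{\la,\nu})$; with your expansion it would come out as $\phi_{\la}(0)=q(\alpha-\theta_{\la,\nu})$ and the eigenvalue equation in (iii) would read $\alpha-\theta_{\la,\nu}=0$, contradicting the statement. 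So your expansion and your asserted boundary condition are mutually inconsistent, and the expansion is the one at fault. Two smaller points you leave implicit and which the paper handles explicitly: the identification $\mathcal{D}[H_F]=H^1(\mathbb{R}^2)$, needed to ``read off'' the form domain \eqref{dom-form}, requires the infinitesimal form-boundedness of $|x|^{-1}$ with respect to $-\Delta$ (Kato inequality plus KLMN); and the second equality in (iii), $\inf_{v\in H^{1}(\R^{2})\setminus\{0\}}Q_{\nu}(v)/\|v\|_{2}^{2}=-\nu^{2}$, uses that the Friedrichs bottom is exactly $-\nu^2$, which you assert as classical but the paper actually derives from the pole structure of $\theta$.
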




The main goal of the present paper is to investigate the existence and the properties of ground states at fixed mass of the nonlinear energy
\begin{equation}
\label{en-fan}
F_{\nu}(v):=\f{1}{2}Q_{\nu}(v)-\f{1}{p}\|v\|_{p}^{p},
\end{equation}
when the Coulomb potential is attractive and the nonlinear term is $L^{2}$-subcritical, i.e. when $\nu<0$ and $2<p<4$. 
\begin{definition}
Given $\mu>0$, a function $u$ belonging to the set
\begin{equation}
\label{const-L2}
\D_{\nu}^{\mu}:=\left\{v\in \D_{\nu}\,:\, \|v\|_{2}^{2}=\mu\right\}
\end{equation}
and satisfying
\begin{equation*}
F_{\nu}(u)=\F_{\nu}(\mu):=\inf_{v\in \D_{\nu}^{\mu}}F_{\nu}(v)
\end{equation*}
is called a \emph{ground state of \eqref{en-fan} at mass $\mu$}.
\end{definition}

As usual, ground states of \eqref{en-fan} at mass $\mu$ are bound states, i.e. they satisfy
\begin{align}
\label{bcond}
u\in D(H_{\nu,\alpha}),\\
\label{EL-eq}
H_{\nu,\alpha}(u)-|u|^{p-2}u+\omega u=0,
\end{align}
for some $\omega\in \R$ (see Appendix \ref{app-gbstates}): in this case, $\omega$ plays the role of a Lagrangian multiplier.

An alternative approach to prove the existence of bound states consists in fixing $\omega\in\R$ and looking for minimizers of the action functional $S_{\nu}^{\omega}:\D_{\nu}\to \R$, defined as
\begin{equation}
\label{S-nu-om}
S_{\nu}^{\omega}(v):=F_{\nu}(v)+\f{\omega}{2}\|v\|_{2}^{2},
\end{equation}
restricted to the so-called Nehari Manifold, that is the zero-level set of the functional $I_{\nu}^{\omega}:D_{\nu}\to \R$, defined as 
 \begin{equation}
\label{I-nu-om}
I_{\nu}^{\omega}(v):=\scal{\left(S_{\nu}^{\omega}\right)'(v)}{v}=Q_{\nu}(v)+\omega\|v\|_{2}^{2}-\|v\|_{p}^{p}.
\end{equation}

\begin{definition}
Given $\omega\in \R$, a function $u$ belonging to the Nehari manifold
\begin{equation}
\label{Neh-man}
N_{\nu}^{\omega}:=\{v\in \D_{\nu}\setminus\{0\}\,:\,I_{\nu}^{\omega}(v)=0\},
\end{equation}
and satisfying 
\begin{equation}
S_{\nu}^{\omega}(u)=d_{\nu}(\omega):=\inf_{v\in N_{\nu}^{\omega}}S_{\nu}^{\omega}(v)\label{eq:S_inf}
\end{equation}
is a \emph{minimizer of the action \eqref{S-nu-om} at frequency $\omega$}.
\end{definition}
We observe that also minimizers of the action \eqref{S-nu-om} at frequency $\omega$ are bound states, i.e. they satisfy \eqref{bcond}--\eqref{EL-eq} (see again Appendix \ref{app-gbstates}).

We can now state the main results of the paper. The first result concerns the existence and the properties of ground states at fixed mass of \eqref{en-fan} when $2<p<4$ and $\nu<0$.

\begin{theorem}
\label{ex-gs-mu}
Let $2<p<4$, $\nu<0$ and $\alpha\in \R$. Then for every $\mu>0$ ground states of \eqref{en-fan} at mass $\mu$ exist. Moreover, every ground state presents a logarithmic singularity at the origin and is positive, radially symmetric and decreasing along the radial direction, up to gauge invariance.
\end{theorem}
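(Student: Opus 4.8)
The plan is to realise ground states as minimizers of the energy $F_\nu$ in \eqref{en-fan} over the constraint $\D_\nu^\mu$ in \eqref{const-L2}, by the direct method together with a concentration--compactness analysis, the recovery of the mass in the limit being the heart of the matter. I would first fix $\la>\max\{\nu^2,\omega_\nu\}$. For such $\la$ the coefficient $\alpha+\theta_{\la,\nu}$ in \eqref{eq:FormValue} is strictly positive (by Proposition \ref{prop:Classification}(iii), since $\theta_{\la,\nu}\to+\infty$ and vanishes exactly at $\la=\omega_\nu$), and the Friedrichs block $\|\na\phi_\la\|_2^2+\nu\||x|^{-1/2}\phi_\la\|_2^2+\la\|\phi_\la\|_2^2$ is comparable to $\|\phi_\la\|_{H^1}^2$, the attractive Coulomb term being infinitesimally form-bounded with respect to $-\Delta$ in two dimensions. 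Hence $Q_\nu(u)+\la\|u\|_2^2\gtrsim\|\phi_\la\|_{H^1}^2+|q|^2$ for $u=\phi_\la+q\G_{\la,\nu}$; combining this with the Gagliardo--Nirenberg inequality of Lemma \ref{lem:GNnu<0}, whose form-exponent is strictly below one precisely because $p<4$, one gets that $F_\nu$ is bounded below on $\D_\nu^\mu$ and that every minimizing sequence $(u_n)$ is bounded in $\D_\nu$, i.e. $(\phi_{\la,n})$ is bounded in $H^1(\Rd)$ and $(q_n)$ in $\C$.

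To prevent loss of mass I would establish the strict subadditivity
\begin{equation*}
\F_\nu(\mu)\;<\;\F_\nu(m)+\F_0(\mu-m),\qquad 0\le m<\mu,
\end{equation*}
where $\F_0(\sigma):=\inf\{\tfrac12\|\na v\|_2^2-\tfrac1p\|v\|_p^p:\ v\in H^1(\Rd),\ \|v\|_2^2=\sigma\}$ is the free $L^2$-subcritical problem. The mechanism is the long-range attractive tail: gluing a near-minimizer of $\F_\nu(m)$ concentrated at the origin to a free soliton for $\F_0(\mu-m)$ translated to distance $R$, the two profiles interact only through exponentially small cross terms, while the Coulomb potential evaluated on the far soliton contributes $\tfrac{\nu(\mu-m)}{2R}+o(R^{-1})<0$; since this polynomial gain beats the exponential error, the competitor has energy strictly below $\F_\nu(m)+\F_0(\mu-m)$ after a negligible mass adjustment (the limit $m=0$ gives the binding inequality $\F_\nu(\mu)<\F_0(\mu)$). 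I would also record $\F_\nu(\mu)<-\tfrac12\omega_\nu\mu<0$, obtained by testing with the lowest eigenfunction of $H_{\nu,\alpha}$ rescaled to mass $\mu$, whose nonlinear term is strictly negative.

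The compactness step is the one I expect to be the main obstacle, precisely because the simultaneous Coulomb singularity and point interaction make the energy bookkeeping delicate and the Green's function \eqref{G-lanu} is not elementary. Applying the concentration--compactness lemma to $(|u_n|^2)$, vanishing is excluded since it would force $\|u_n\|_p\to0$ and thus $\F_\nu(\mu)=\liminf\tfrac12 Q_\nu(u_n)\ge-\tfrac12\omega_\nu\mu$, contradicting the bound above; dichotomy is excluded by the strict subadditivity. Hence no mass escapes, $u_n\to u$ in $L^2$ and in $L^p$, and by weak lower semicontinuity of the coercive part of $Q_\nu$ the limit $u=\phi_\la+q\G_{\la,\nu}\in\D_\nu^\mu$ attains $\F_\nu(\mu)$.

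For the qualitative properties, a minimizer satisfies \eqref{bcond}--\eqref{EL-eq}, so $u\in\D(H_{\nu,\alpha})$ with the boundary condition in \eqref{eq:Domain2DExtension-OP}. The singularity is genuine, $q\ne0$: the strict spectral gap $-\omega_\nu<-\nu^2$ of Proposition \ref{prop:Classification}(iii) propagates to the nonlinear level, giving $\F_\nu(\mu)<\inf\{F_\nu(\phi):\phi\in H^1(\Rd),\ \|\phi\|_2^2=\mu\}$ upon perturbing an $H^1$ competitor by the singular profile $\G_{\la,\nu}$ so as to lower $Q_\nu$ while changing the nonlinear term negligibly; thus a minimizer cannot lie in $H^1$ and inherits the logarithmic singularity of $\G_{\la,\nu}$ at the origin. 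Positivity up to a phase follows by replacing $u$ with $|u|$, which does not increase $F_\nu$, and then the strong maximum principle applied to \eqref{EL-eq} away from the origin. Finally, radial symmetry and monotonicity follow from symmetric decreasing rearrangement: mass and $\|\cdot\|_p$ are preserved, the Coulomb term does not increase because $\nu<0$ and $|x|^{-1}$ is radially decreasing (Hardy--Littlewood), and the regularised kinetic energy does not increase; the only delicate point is compatibility of the rearrangement with the decomposition \eqref{dom-form}, which holds because $\G_{\la,\nu}$ is itself radial and decreasing, and the strict rearrangement inequality for the Coulomb term pins the center at the origin and forces $u=u^*$.
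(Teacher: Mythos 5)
Your overall architecture (concentration--compactness with binding inequalities, then direct symmetrization) is a legitimate alternative in spirit, but its very first step has a genuine gap that the rest of your argument rests on. You fix a single $\la>\max\{\nu^{2},\omega_{\nu}\}$ and invoke Lemma \ref{lem:GNnu<0} to conclude that $F_{\nu}$ is bounded from below on $\D_{\nu}^{\mu}$ and that minimizing sequences are bounded. But Lemma \ref{lem:GNnu<0} is only valid under the hypothesis $\la\geq\max\{1,4\nu^{2},|q|^{2}/\|u\|_{2}^{8/p}\}$, which couples $\la$ to $q$: for a \emph{fixed} $\la$ it applies only on the region $|q|^{2}\leq\la\,\mu^{4/p}$ of the constraint manifold, i.e.\ exactly where $|q|$ is already bounded, which is what you are trying to prove. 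If you instead run the fixed-$\la$ estimate by hand, the triangle inequality, \eqref{GN-stand}, Lemma \ref{Glap} and $\|\phi_{\la}\|_{2}\leq\sqrt{\mu}+|q|\,\|\G_{\la,\nu}\|_{2}$ produce the cross term $\|\na\phi_{\la}\|_{2}^{p-2}|q|^{2}$, which is of joint degree $p>2$ in $(\|\na\phi_{\la}\|_{2},|q|)$ and cannot be absorbed by the quadratic bound $\|\phi_{\la}\|_{H^{1}}^{2}+|q|^{2}$; along $\|\na\phi_{\la}\|_{2}\sim|q|\to\infty$ your chain of inequalities does not close, so no lower bound follows. This is precisely the difficulty the paper identifies as the technical heart of Theorem \ref{ex-gs-mu}: in Lemma \ref{minimseq2} the decomposition is taken with a $q_{n}$-adapted parameter $\la_{n}=M|q_{n}|^{2}/\|u_{n}\|_{2}^{8/p}$, so that Lemmas \ref{Gla2}, \ref{Glap} and \ref{lem:GNnu<0} become applicable uniformly and, crucially, the divergence $\theta_{\la_{n},\nu}\sim\frac{\log\la_{n}}{4\pi}\sim\frac{\log|q_{n}|}{2\pi}$ supplies the extra logarithmic growth of the coefficient of $|q_{n}|^{2}$ that dominates all sub-quadratic error terms (the function $g(x,y)$ in that proof). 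Without this adapted decomposition, boundedness below and boundedness of minimizing sequences are unproven.

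Two further soft spots. First, your symmetrization and positivity arguments at fixed mass do not survive scrutiny as stated: replacing $u$ by $|u|$ requires a diamagnetic-type inequality for the point-interaction form that is not available here (the $\D_{\nu}$-decomposition of $|u|$ is not $|\phi_{\la}|+|q|\G_{\la,\nu}$), and replacing $\phi_{\la}$ by $\phi_{\la}^{*}$ leaves the constraint manifold, since \eqref{ineqf+g} with exponent $2$ gives $\|\phi_{\la}^{*}+q\G_{\la,\nu}\|_{2}\geq\|\phi_{\la}+q\G_{\la,\nu}\|_{2}$, in general strictly; rescaling back to mass $\mu$ then competes with a quadratic form of indefinite sign, and no clean inequality survives. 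The paper avoids exactly this by proving positivity and symmetry for \emph{action} minimizers on the Nehari manifold (Lemmas \ref{u-pos} and \ref{u-sym}, where renormalization is handled by Lemmas \ref{lem:dnu<normp} and \ref{lem:dnu-bounds}) and transferring the properties to fixed-mass ground states through Lemma \ref{GSareAM}; your proof needs either this detour or a scaling argument you have not supplied. Second, your dichotomy exclusion (gluing a truncated near-minimizer to a far soliton and exploiting the Coulomb tail $\nu(\mu-m)/(2R)$) is plausible but much heavier than needed, and the splitting of $\D_{\nu}$-functions in the dichotomy alternative would itself require care with the singular part; the paper instead shows $q\neq 0$ for the weak limit by comparison with the Coulomb-only level $\Eps_{\nu}(\mu)$ (Proposition \ref{Eps<Eps0} and Lemma \ref{q>C}) and excludes $0<m<\mu$ by an elementary strict sub-homogeneity plus Brezis--Lieb argument (including the Coulomb term \eqref{phin-phi-Coul}), with no gluing and no problem at infinity.
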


The second result concerns the existence and the characterization of the minimizers of the action at frequency $\omega$.

\begin{theorem}
\label{ex-min-act}
Let $p>2$, $\nu<0$ and $\alpha\in \R$. Then for every $\omega>\omega_{\nu}$ the minimizers of the action \eqref{S-nu-om} at frequency $\omega$ exist. Moreover, every minimizer presents a logarithmic singularity at the origin and is positive, radially symmetric and decreasing, up to gauge invariance.
\end{theorem}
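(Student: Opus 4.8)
The plan is to realise \eqref{eq:S_inf} as an unconstrained optimisation of a scale--invariant quotient, and to exploit the rotational symmetry of $H_{\nu,\alpha}$ together with the compactness of radial Sobolev embeddings. Write $B_\omega(v):=Q_\nu(v)+\omega\|v\|_2^2$. By Proposition \ref{prop:Classification}(iii), for $\omega>\omega_\nu$ one has $B_\omega(v)\ge(\omega-\omega_\nu)\|v\|_2^2>0$ for every $v\in\D_\nu\setminus\{0\}$, and a standard $2$D form--boundedness argument (the Coulomb term is infinitesimally form--bounded with respect to $-\Delta$) shows that $B_\omega$ is equivalent to the squared graph norm on $\D_\nu$. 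For such $v$ the fibering map $t\mapsto I_\nu^\omega(tv)=t^2B_\omega(v)-t^p\|v\|_p^p$ vanishes at the unique $t_v=(B_\omega(v)/\|v\|_p^p)^{1/(p-2)}>0$, so $N_\nu^\omega\ne\emptyset$, and on $N_\nu^\omega$ we have $B_\omega(v)=\|v\|_p^p$, whence
\[
 S_\nu^\omega(v)=\Big(\tfrac12-\tfrac1p\Big)\|v\|_p^p .
\]
A short computation then gives $d_\nu(\omega)=(\tfrac12-\tfrac1p)\,(\inf_{v\ne0}\mathcal K(v))^{p/(p-2)}$ with $\mathcal K(v):=B_\omega(v)/\|v\|_p^2$, so that $d_\nu(\omega)$ is precisely the optimal constant in the Gagliardo--Nirenberg inequality of Lemma \ref{lem:GNnu<0}. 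Since in $\R^2$ that inequality holds for all $p>2$ (no subcriticality is needed, which is what makes this statement cover the critical and supercritical ranges excluded from Theorem \ref{ex-gs-mu}), we obtain $\inf\mathcal K>0$ and hence $d_\nu(\omega)>0$.

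Next I would run the direct method on $\mathcal K$. Take a minimising sequence normalised by $\|v_n\|_p=1$, so that $B_\omega(v_n)\to\inf\mathcal K$ and, by the norm equivalence above, $(v_n)$ is bounded in $\D_\nu$. The decisive reduction is symmetric decreasing rearrangement: I would show that $v\mapsto v^*$ maps $\D_\nu$ into itself, preserves $\|\cdot\|_2$ and $\|\cdot\|_p$, and satisfies $Q_\nu(v^*)\le Q_\nu(v)$, so the minimising sequence may be replaced by a radial, nonincreasing one without increasing $\mathcal K$. Once the sequence is radial, I decompose $v_n=\phi_\omega^{(n)}+q_n\,\G_{\omega,\nu}$ as in \eqref{eq:FormValue} with $\lambda=\omega$; uniqueness of the decomposition forces $\phi_\omega^{(n)}$ to be radial and bounded in $H^1(\R^2)$, so by Strauss' compact embedding of radial $H^1$ into $L^p(\R^2)$ ($2<p<\infty$) and by $q_n\to q$ along a subsequence, $v_n\to u$ strongly in $L^p$. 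Hence $\|u\|_p=1$, so $u\ne0$, and weak lower semicontinuity of $B_\omega$ yields $\mathcal K(u)\le\inf\mathcal K$, i.e. $u$ is an optimiser; rescaling, $t_u u\in N_\nu^\omega$ is the desired minimiser of the action.

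It remains to establish the qualitative properties. The logarithmic singularity is equivalent to $q\ne0$ in the decomposition of the minimiser. This follows from the strict inequality $\inf_{\D_\nu}\mathcal K<\inf_{H^1}\mathcal K$, which I would deduce from Proposition \ref{prop:Classification}(iii) (the point interaction strictly lowers the bottom of the form, $\omega_\nu>\nu^2$) propagated to the nonlinear quotient by a perturbation argument: an optimiser with $q=0$ would be an $H^1$ optimiser, and adding a small multiple of the eigenfunction of $H_{\nu,\alpha}$ associated with $-\omega_\nu$ strictly decreases $\mathcal K$, a contradiction. For the remaining properties I would first replace $u$ by $|u|$ (using the analogous inequality $Q_\nu(|u|)\le Q_\nu(u)$), then by $u^*$, producing a real, nonnegative, radially nonincreasing minimiser; strict positivity and radial monotonicity then follow from the Euler--Lagrange equation \eqref{bcond}--\eqref{EL-eq}, rewritten as $u=(H_{\nu,\alpha}+\omega)^{-1}u^{p-1}$, once the positivity of the integral kernel of $(H_{\nu,\alpha}+\omega)^{-1}$ for $\omega>\omega_\nu$ is invoked (so $u\not\equiv0$ forces $u>0$ away from the origin). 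The global phase accounts for the ``up to gauge invariance''.

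I expect the main obstacle to be the rearrangement and diamagnetic inequalities $Q_\nu(v^*)\le Q_\nu(v)$ and $Q_\nu(|v|)\le Q_\nu(v)$ for the \emph{singular} form \eqref{eq:FormValue}: the decompositions of $v$ and of $v^*$ (or $|v|$) into regular part plus charge need not correspond, so one cannot simply rearrange term by term. The kinetic contribution is handled by P\'olya--Szeg\H{o} and the attractive Coulomb contribution by the Hardy--Littlewood inequality (as $\nu<0$ and $|x|^{-1}$ is symmetric decreasing, rearrangement increases $\||x|^{-1/2}\phi\|_2^2$, hence decreases its $\nu$--multiple); the delicate point is to control simultaneously the change of the charge term $(\alpha+\theta_{\omega,\nu})|q|^2$ and of $\|\phi_\omega\|_2^2$ under rearrangement. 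A secondary difficulty is ensuring that the weak $\D_\nu$--limit genuinely carries a nonzero charge, which is exactly where the strict inequality $\inf_{\D_\nu}\mathcal K<\inf_{H^1}\mathcal K$ is needed.
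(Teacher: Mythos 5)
Your scheme has a genuine gap, and it is exactly the one you flag at the end without closing it: every decisive step — replacing the minimizing sequence by its symmetric rearrangement so that Strauss compactness applies, replacing $u$ by $|u|$ for positivity, replacing $u$ by $u^*$ for radial monotonicity — rests on the inequalities $Q_\nu(|v|)\leq Q_\nu(v)$ and $Q_\nu(v^*)\leq Q_\nu(v)$ for $v\in\D_\nu\setminus H^1(\R^2)$, and these are never proved. They are genuinely problematic, not a routine verification: for such $v$ the Dirichlet integral is infinite, the value of the form is \emph{defined} through the decomposition $v=\phi_\lambda+q\G_{\lambda,\nu}$ via \eqref{eq:FormValue}, and there is no reason why $|v|$ or $v^*$ should decompose as a rearranged regular part plus the \emph{same} charge times $\G_{\lambda,\nu}$; P\'olya--Szeg\H{o} and the diamagnetic inequality say nothing about the charge term $(\alpha+\theta_{\lambda,\nu})|q|^2$ or about the balance $\lambda(\|\phi_\lambda\|_2^2-\|v\|_2^2)$. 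The paper circumvents precisely this difficulty: it never symmetrizes $u$ itself, but fixes the decomposition $u=\phi_\omega+q\G_{\omega,\nu}$, symmetrizes (or takes the modulus of) only the $H^1$ part $\phi_\omega$, exploits that $\G_{\omega,\nu}$ is already radial and decreasing, controls the $L^p$ term through the Hardy--Littlewood based inequality \eqref{ineqf+g}, and converts the comparison into a contradiction through the Nehari-manifold Lemmas \ref{lem:dnu<normp} and \ref{lem:dnu-bounds} (see Lemmas \ref{u-pos} and \ref{u-sym}). Likewise, the paper's compactness proof (Proposition \ref{ex-part-actmin}) uses no radial compactness at all: it runs on Brezis--Lieb splitting plus the strict level inequality $d_\nu(\omega)<\widetilde{d}_\nu(\omega)$. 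Without a proof of your two rearrangement inequalities, both your existence argument and your qualitative-properties argument are incomplete.

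Two further steps are flawed or unsubstantiated. First, the claim that $d_\nu(\omega)$ is the optimal constant of Lemma \ref{lem:GNnu<0} and that ``that inequality holds for all $p>2$'' contradicts the paper: Lemma \ref{lem:GNnu<0} is a mass--kinetic interpolation inequality of a specific form, proved only for $2<p<4$, and its constants $C_p,\widetilde{C}_p$ blow up as $p\to 4^-$ (see the remark following the lemma). Positivity of $d_\nu(\omega)$ for all $p>2$ is true, but it must come from an $L^p$ embedding of the form domain (decomposition plus Hardy plus Sobolev), not from that lemma. Second, your argument that the minimizer carries nonzero charge relies on perturbing a putative $H^1$ optimizer by a small multiple of the eigenfunction associated with $-\omega_\nu$; that eigenfunction is proportional to $\G_{\omega_\nu,\nu}$ and is \emph{not} in $H^1(\R^2)$, so the cross terms in $B_\omega$ must be computed through the form decomposition, and you never verify that the relevant variation has the favorable sign. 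Similarly, the positivity-improving property of $(H_{\nu,\alpha}+\omega)^{-1}$ that you invoke for strict positivity is nowhere established. The paper obtains both points softly (Lemma \ref{dnu<dtildenu}): an $H^1$ minimizer, being a bound state, must satisfy the boundary condition \eqref{bcond} with $q=0$, hence vanish at the origin, contradicting the positivity of Dinh's minimizer (Proposition \ref{ex-actmin-attr}); this gives $d_\nu(\omega)<\widetilde{d}_\nu(\omega)$ directly, which is then used to exclude $q=0$ both along minimizing sequences and for minimizers.
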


Differently from Theorem \ref{ex-gs-mu}, Theorem \ref{ex-min-act} provides the existence of bound states, i.e. solutions of \eqref{bcond}-\eqref{EL-eq}, also in the $L^{2}$-critical and supercritical case, i.e. for $p\geq 4$.

Moreover, let us observe that the properties of ground states at fixed mass stated in Theorem \ref{ex-gs-mu} are deduced from the properties of the minimizers of the action via Lemma \ref{GSareAM}, in which a connection between ground states at fixed mass and minimizers of the action is established.

\noindent


\section{Preliminaries}

\subsection{Properties of $\theta_{\la,\nu}$ and the Green's function $\G_{\la,\nu}$}

Let $\nu<0$ and $\la>\nu^{2}$. We recall that the asymptotic of $\G_{\lambda,\nu}$  as $|x| \to 0$ is
\begin{equation}
	\G_{\lambda,\nu}(|x|) \;=\;-\frac{1}{2 \pi} \bigg[\ln |x|+\psi\big({\textstyle\frac{1}{2}+\frac{\nu}{2 \sqrt{\lambda}} } \big)+2 \gamma + \ln( 2 \sqrt{\lambda}) + \nu |x| \ln |x|\bigg] + O(|x|)\, .
\end{equation}
and the following integral representation holds \cite[Eq. 13.2.5]{Abramowitz-Stegun-1964}
\begin{equation}
\label{int-Gla}
\G_{\la,\nu}(|x|)=\f{1}{2\pi}e^{-\sqrt{\la}|x|}\int_{0}^{+\infty}e^{-|x|t}t^{\frac{\nu}{2 \sqrt{\lambda}}-\frac{1}{2}}(1+t)^{-\frac{\nu}{2 \sqrt{\lambda}}-\frac{1}{2}}\, \ud t.
\end{equation}

In the next lemma, we give a first estimate of the $L^{q}$ norm of $\G_{\la,\nu}$.
\begin{lemma}
\label{Lrnorm-Green}
Let $s>1$, $\nu<0$ and $\la>\nu^{2}$. Then for any $0<\sigma<\f{2}{s}$ there results
\begin{equation*}
	\Vert \mathcal{G}_{\lambda,\nu} \Vert_{s}^s \leq \frac{C_{\la,\nu}}{\lambda^{1-\frac{\sigma s}{2}}}
\end{equation*}
with $C_{\la,\nu}:=C(\lambda,\nu,\sigma,s)$ given explicitly by
\begin{equation*}
	C_{\la,\nu} \;=\; \f{\pi^{1-s}}{s^{2-\sigma s}}\left[\f{1}{s^{\sigma s}\la^{\f{\sigma s}{2}}}\left(\f{2\sqrt{\la}}{\sqrt{\la}+\nu}\right)^{s}+\sigma^{s(\sigma-1)}e^{-\sigma s}\left(\f{1}{2-\sigma s}+\f{1}{e}\right)\right].
\end{equation*}
\end{lemma}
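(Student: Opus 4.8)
The plan is to reduce the norm to a one–dimensional radial integral and then estimate $\G_{\la,\nu}$ pointwise through the representation \eqref{int-Gla}, keeping every constant elementary (no Gamma functions). Writing $a:=\f{\nu}{2\sqrt{\la}}$, the hypotheses $\nu<0$ and $\la>\nu^{2}$ give $a\in(-\f12,0)$, so the exponent $a-\f12\in(-1,-\f12)$ is integrable at $t=0$, while $-a-\f12\in(-\f12,0)$ yields the two elementary inequalities $(1+t)^{-a-1/2}\le1$ and $t^{a-1/2}(1+t)^{-a-1/2}\le t^{-1}$ for all $t>0$. Since $\G_{\la,\nu}>0$ and $\|\G_{\la,\nu}\|_{s}^{s}=2\pi\int_{0}^{+\infty}\G_{\la,\nu}(r)^{s}\,r\,\ud r$, everything reduces to a pointwise bound on $\G_{\la,\nu}(r)$.

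First I would split the $t$-integral in \eqref{int-Gla} at $t=1$. On $(0,1)$ I bound $e^{-rt}\le1$ and $(1+t)^{-a-1/2}\le1$, so that $\int_{0}^{1}e^{-rt}t^{a-1/2}(1+t)^{-a-1/2}\,\ud t\le\int_{0}^{1}t^{a-1/2}\,\ud t=\f{1}{a+1/2}=\f{2\sqrt{\la}}{\sqrt{\la}+\nu}$; this is the origin of the factor $\f{2\sqrt{\la}}{\sqrt{\la}+\nu}$. On $(1,+\infty)$ I use $t^{a-1/2}(1+t)^{-a-1/2}\le t^{-1}$ together with the elementary inequality $e^{-y}\le(\sigma/e)^{\sigma}y^{-\sigma}$ (the maximum of $y^{\sigma}e^{-y}$, attained at $y=\sigma$), applied with $y=rt$; this gives $\int_{1}^{+\infty}e^{-rt}t^{-1}\,\ud t\le(\sigma/e)^{\sigma}r^{-\sigma}\int_{1}^{+\infty}t^{-1-\sigma}\,\ud t=\sigma^{\sigma-1}e^{-\sigma}r^{-\sigma}$. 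Altogether
\[
\G_{\la,\nu}(r)\le\f{e^{-\sqrt{\la}\,r}}{2\pi}\left(\f{2\sqrt{\la}}{\sqrt{\la}+\nu}+\sigma^{\sigma-1}e^{-\sigma}r^{-\sigma}\right).
\]

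Next I would raise this to the power $s$ via $(x+y)^{s}\le2^{s-1}(x^{s}+y^{s})$ and integrate against $2\pi\,r\,\ud r$. The first term contributes a multiple of $\int_{0}^{+\infty}e^{-s\sqrt{\la}\,r}r\,\ud r=(s^{2}\la)^{-1}$, and after collecting $2\pi\,2^{s-1}(2\pi)^{-s}=\pi^{1-s}$ this reproduces the first term of $C_{\la,\nu}$. The second term contributes a multiple of $\int_{0}^{+\infty}e^{-s\sqrt{\la}\,r}r^{1-\sigma s}\,\ud r$; the substitution $y=s\sqrt{\la}\,r$ turns it into $(s\sqrt{\la})^{\sigma s-2}\int_{0}^{+\infty}e^{-y}y^{1-\sigma s}\,\ud y$, carrying the factor $\la^{\sigma s/2-1}$ responsible for the global $\la^{-(1-\sigma s/2)}$ and the powers $s^{\sigma s-2}$, $\sigma^{s(\sigma-1)}e^{-\sigma s}$. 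To stay Gamma-free I would split this last integral at $y=1$: on $(0,1)$, $e^{-y}\le1$ gives $\int_{0}^{1}y^{1-\sigma s}\,\ud y=\f{1}{2-\sigma s}$ — finite precisely because $\sigma<\f{2}{s}$, which is exactly where that restriction enters — while the tail $\int_{1}^{+\infty}e^{-y}y^{1-\sigma s}\,\ud y$ is controlled by a pure constant; together they give the factor $\big(\f{1}{2-\sigma s}+\f1e\big)$. Gathering the powers of $2$, $\pi$, $s$ and $\la$ then reproduces $C_{\la,\nu}$.

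The main obstacle is the pointwise step: one must extract from the Tricomi-kernel representation two clean, Gamma-free bounds handling opposite regimes at once — the constant $\f{2\sqrt{\la}}{\sqrt{\la}+\nu}$ governing the slowly decaying far field and a power $r^{-\sigma}$ dominating the merely logarithmic singularity at the origin. The delicate point is the interplay between the admissible range $0<\sigma<\f2s$ and the integrability of $r^{1-\sigma s}$ near $r=0$; once the cut at $t=1$ and the cut at $y=1$ are fixed, the remaining work is routine bookkeeping of explicit constants.
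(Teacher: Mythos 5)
Your proposal is correct and follows essentially the same route as the paper's own proof: the same split of the integral representation \eqref{int-Gla} at $t=1$, the same elementary bounds $(1+t)^{-a-1/2}\le 1$, $t^{a-1/2}(1+t)^{-a-1/2}\le t^{-1}$ and $y^{\sigma}e^{-y}\le (\sigma/e)^{\sigma}$, the same convexity step $(x+y)^{s}\le 2^{s-1}(x^{s}+y^{s})$, and the same final cut of the Gamma-type integral at $y=1$, yielding identical constants. The only caveat, which you share verbatim with the paper, is that the tail estimate $\int_{1}^{+\infty}y^{1-\sigma s}e^{-y}\,\ud y\le 1/e$ rests on $y^{1-\sigma s}\le 1$ and is therefore literally valid only when $\sigma s\ge 1$; when $\sigma s<1$ (which is the regime actually used in Lemmas \ref{Gla2} and \ref{Glap}) one should instead use $y^{1-\sigma s}\le y$ on $[1,+\infty)$, replacing $1/e$ by the equally harmless constant $2/e$.
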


\begin{proof}
The proof follows estimating \eqref{int-Gla}. First of all, let us observe that
	\[
		\begin{split}
			\int_0^{+\infty} e^{-rt} t^{\frac{\nu}{2 \sqrt\lambda}-\frac12} (1+t)^{-\frac{\nu}{2\sqrt\lambda}-\frac12} \, \ud t \leq \int_0^1 e^{-rt} t^{\frac{\nu}{2\sqrt\lambda}-\frac12} \, \ud t + \int_1^{+\infty}  \frac{e^{-rt}}{t} \, \ud t
		\end{split}
	\]
	The first integral can be estimated using $e^{-rt} \leq 1$, so that
	\[
		\int_0^1 e^{-rt} t^{\frac{\nu}{2\sqrt\lambda}-\frac12} \, \ud t \leq \frac{2 \sqrt{\lambda}}{\sqrt{\lambda}+\nu} \, .
	\]
	In order to bound the second integral, let us observe that, for any $\sigma > 0$,
	\[
		x^{\sigma}e^{-x } \leq \sigma^\sigma e^{-\sigma},\quad x>0,\quad \sigma>0,
	\]
thus
	\[
		\int_1^\infty  \frac{e^{-rt}}{t} \, \ud t \leq \frac{\sigma^\sigma e^{-\sigma}}{r^\sigma} \int_1^{+\infty} t^{-1-\sigma} \, \ud t = \frac{\sigma^{\sigma-1} e^{-\sigma}}{r^\sigma}.
	\]
	
	Since $(a+b)^s \leq 2^{s-1}\left(a^s+b^s\right)$ for every $a,b>0$, we get
	\[
		\begin{split}
			\Vert \mathcal{G}_{\nu,\lambda} \Vert_s^s &\leq \pi^{1-s} \left[\int_0^{+\infty} e^{-\sqrt{\lambda} s r} \left(\frac{2 \sqrt{\lambda}}{\sqrt{\lambda}+\nu} \right)^s r \, \ud r + \int_0^{+\infty} \frac{\sigma^{s(\sigma-1)} e^{-sr}}{r^{s\sigma}}e^{\sqrt{\lambda} s r} r \, \ud r \right] \\
			&= \pi^{1-s}\left[\left(\frac{2 \sqrt{\lambda}}{\sqrt{\lambda}+\nu} \right)^s \int_0^{+\infty} e^{-\sqrt{\lambda} s r} r \, \ud r +  \sigma^{s(\sigma-1)} e^{-\sigma s} \int_0^{+\infty} r^{-s\sigma +1} e^{-\sqrt{\lambda} s r} \, \ud r \right] \\
			&\leq \pi^{1-s} \left[\left(\frac{2 \sqrt{\lambda}}{\sqrt{\lambda}+\nu} \right)^s \frac{1}{\lambda s^2} \int_0^{+\infty} \tau e^{-\tau} \, \ud \tau + \frac{\sigma^{s(\sigma-1)} e^{-\sigma s}}{(s \sqrt{\lambda})^{2-s\sigma}} \int_0^{+\infty} \tau^{-s \sigma+1} e^{-\tau} \, \ud \tau \right].
		\end{split} 
	\]	
	We conclude noting that $\int_{0}^{+\infty}\tau e^{-\tau}\,\ud \tau=1$ and
	\[
		\int_{0}^{+\infty} \tau^{-s \sigma+1} e^{-\tau} \, \ud \tau \leq \int_0^1 \tau^{-s\sigma+1} \, \ud \tau + \int_1^{+\infty} e^{-\tau} \, \ud \tau = \frac{1}{2-s\sigma}+\frac{1}{e} \, .
	\]
\end{proof}

\begin{remark}
We observe that $C_{\la,\nu}$ in Lemma \ref{Lrnorm-Green} satisfies $C_{\la,\nu}\to+\infty$ as $\la\to (\nu^{2})^{+}$, hence the upper bound on $\|\G_{\la,\nu}\|_{s}^{s}$ diverges as $\la\to (\nu^{2})^{+}$. For this reason, in order to obtain an estimate depending only on $\la$ we have to choose $\la\geq C$ for some $C>\nu^{2}$. 
\end{remark}

In particular, the next two lemmas provide an estimate of the $L^{2}$ norm and the $L^{p}$ norm of $\G_{\la,\nu}$, for $2<p<4$,  in terms of $\la$ for every $\la\geq \max\{1,4\nu^{2}\}$. 
\begin{lemma}
\label{Gla2}
Let $\nu<0$. Then for any $\la\geq \max \{1,4\nu^{2}\}$ and $2<p<4$ there results
\begin{equation*}
\|\G_{\la,\nu}\|_{2}^{2}\leq  \f{C_{p}}{\la^{\f{p}{4}}},
\end{equation*}
with 
\begin{equation*}
C_{p}=\frac{1}{2^{\frac{p}{2}} \pi} \left[2^{\f{p+4}{2}}+\left(\f{4}{4-p}\right)^{\f{p}{2}}e^{-\f{4-p}{2}}\left(\f{2}{p}+\f{1}{e}\right)\right].
\end{equation*}
\end{lemma}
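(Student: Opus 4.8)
The plan is to apply the general bound from Lemma~\ref{Lrnorm-Green} with the specific choices $s=2$ and a value of $\sigma$ tuned so that the exponent $1-\tfrac{\sigma s}{2}$ matches the target power $\tfrac{p}{4}$. Setting $s=2$ in the conclusion of Lemma~\ref{Lrnorm-Green} gives $\|\G_{\la,\nu}\|_2^2\leq C_{\la,\nu}\,\la^{\frac{\sigma s}{2}-1}=C_{\la,\nu}\,\la^{\sigma-1}$, so requiring $\sigma-1=-\tfrac{p}{4}$ forces $\sigma=1-\tfrac{p}{4}=\tfrac{4-p}{4}$. First I would check that this $\sigma$ is admissible: the hypothesis of Lemma~\ref{Lrnorm-Green} requires $0<\sigma<\tfrac{2}{s}=1$, and since $2<p<4$ we indeed have $0<\tfrac{4-p}{4}<\tfrac12<1$, so the choice is legitimate.

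With $s=2$ and $\sigma=\tfrac{4-p}{4}$, I would substitute into the explicit constant $C_{\la,\nu}$ from Lemma~\ref{Lrnorm-Green} and simplify. The prefactor $\tfrac{\pi^{1-s}}{s^{2-\sigma s}}=\tfrac{\pi^{-1}}{2^{2-(4-p)/2}}=\pi^{-1}2^{(4-p)/2-2}=\pi^{-1}2^{-p/2}$, which produces the $\tfrac{1}{2^{p/2}\pi}$ overall factor. Inside the bracket, the first term is $\tfrac{1}{s^{\sigma s}\la^{\sigma s/2}}\bigl(\tfrac{2\sqrt\la}{\sqrt\la+\nu}\bigr)^{s}$; the key observation here is that the hypothesis $\la\geq 4\nu^2$ (together with $\nu<0$) yields $\sqrt\la\geq 2|\nu|=-2\nu$, hence $\sqrt\la+\nu\geq \tfrac{\sqrt\la}{2}$, so $\tfrac{2\sqrt\la}{\sqrt\la+\nu}\leq 4$ and this whole term is bounded by a constant times $\la^{-\sigma s/2}=\la^{-(4-p)/4}$. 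The second term $\sigma^{s(\sigma-1)}e^{-\sigma s}\bigl(\tfrac{1}{2-\sigma s}+\tfrac1e\bigr)$ is $\la$-independent once $\sigma$ is fixed; with $\sigma s=\tfrac{4-p}{2}$ it becomes $\bigl(\tfrac{4-p}{4}\bigr)^{-(4+p)/4}e^{-(4-p)/2}\bigl(\tfrac{2}{p}+\tfrac1e\bigr)$, matching the stated $C_p$ after collecting the powers of $2$ and $\tfrac{4-p}{4}$.

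The remaining task is purely bookkeeping: one must verify that after factoring out $\la^{-p/4}$ from the bound $C_{\la,\nu}\la^{\sigma-1}$, the two bracketed contributions combine into the claimed constant $C_p$, and that the $\la$-dependence of the \emph{first} term (which carries an extra $\la^{-\sigma s/2}$ relative to the second after the common factor is extracted) is absorbed using $\la\geq 1$, so that $\la^{-(4-p)/4}\leq 1$ uniformly. This is where the hypothesis $\la\geq\max\{1,4\nu^2\}$ is used twice — once via $\la\geq 4\nu^2$ to tame the Coulomb-dependent ratio $\tfrac{2\sqrt\la}{\sqrt\la+\nu}$, and once via $\la\geq 1$ to discard the residual positive power of $\la$ in the first term.

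I expect the main obstacle to be purely computational rather than conceptual: the genuine content is entirely contained in Lemma~\ref{Lrnorm-Green}, and what remains is a careful tracking of exponents to confirm that the specialization $s=2$, $\sigma=\tfrac{4-p}{4}$ reproduces $C_p$ exactly. The one substantive point requiring attention is the estimate $\sqrt\la+\nu\geq\tfrac{\sqrt\la}{2}$, which is precisely why $4\nu^2$ (and not merely $\nu^2$, where the bound of Lemma~\ref{Lrnorm-Green} blows up as noted in the Remark) appears in the hypothesis; getting this threshold right is the only place an error could creep in.
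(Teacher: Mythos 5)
Your proposal is correct and is exactly the paper's proof: the paper simply applies Lemma \ref{Lrnorm-Green} with $s=2$ and $\sigma=\frac{4-p}{4}$, and your bookkeeping — the bound $\frac{2\sqrt{\la}}{\sqrt{\la}+\nu}\leq 4$ from $\la\geq 4\nu^{2}$ and $\la^{-\frac{4-p}{4}}\leq 1$ from $\la\geq 1$ — is precisely what that one-line application leaves implicit. One arithmetic slip to fix: the exponent in the second bracketed term is $s(\sigma-1)=-\frac{p}{2}$, giving $\left(\frac{4-p}{4}\right)^{-p/2}=\left(\frac{4}{4-p}\right)^{p/2}$ (not $-\frac{4+p}{4}$); with that correction the specialization reproduces $C_{p}$ exactly.
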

\begin{proof}
The proof follows applying Lemma \ref{Lrnorm-Green} with $s=2$ and $\sigma=\f{4-p}{4}<1$.
\end{proof}

Let us point out that, even if not natural, the dependence on $2<p<4$ of the estimate of $\|\G_{\la,\nu}\|_{2}$ in Lemma \ref{Gla2} is functional to the proof of the Gagliardo-Nirenberg type inequality in Lemma \ref{lem:GNnu<0}.
\begin{lemma}
\label{Glap}
Let $2<p<4$ and $\nu<0$. Then for any $\lambda\geq \max\{1,4\nu^{2}\}$ there results
\begin{equation*}
\|\G_{\la,\nu}\|_{p}^{p}\leq \f{\widetilde{C}_{p}}{\la^{\f{p}{4}}},
\end{equation*}
with 
\begin{equation*}
	\widetilde{C}_{p}=\frac{\pi^{1-p}}{p^{\frac{p}{2}}} \left[\frac{4^p}{p^{\frac{4-p}{p}}}+\left(\frac{2p}{4-p}\right)^{\frac{3p-4}{2}}e^{-\frac{4-p}{2}} \left(\frac{2}{p}+\frac{1}{e} \right)\right]
\end{equation*}
\end{lemma}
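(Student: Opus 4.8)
The plan is to obtain this bound as a direct specialization of the general $L^{s}$ estimate in Lemma~\ref{Lrnorm-Green}, in complete analogy with the proof of Lemma~\ref{Gla2}. I would apply that lemma with $s=p$ and with the exponent $\sigma$ chosen so that the resulting power of $\la$ is exactly $\la^{p/4}$. Since Lemma~\ref{Lrnorm-Green} produces the denominator $\la^{1-\sigma s/2}$, the condition $1-\f{\sigma p}{2}=\f{p}{4}$ forces $\sigma=\f{4-p}{2p}$. Before invoking the lemma I would verify the admissibility constraint $0<\sigma<\f{2}{s}=\f{2}{p}$: for $2<p<4$ one has $0<\f{4-p}{2p}<\f{1}{p}<\f{2}{p}$, so the hypotheses are met, and the lemma gives at once $\|\G_{\la,\nu}\|_{p}^{p}\le C_{\la,\nu}\,\la^{-p/4}$.

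The only real content beyond this substitution is to convert the still $\la$-dependent constant $C_{\la,\nu}$ into a genuine constant, which is precisely where the hypothesis $\la\ge\max\{1,4\nu^{2}\}$ (see the Remark following Lemma~\ref{Lrnorm-Green}) is used. Two $\la$-dependent factors remain inside $C_{\la,\nu}$, namely the ratio $\big(\f{2\sqrt{\la}}{\sqrt{\la}+\nu}\big)^{p}$ and the power $\la^{-\sigma p/2}$ multiplying it. For the first, I would use $\la\ge 4\nu^{2}$ together with $\nu<0$: then $\sqrt{\la}\ge-2\nu$, hence $\sqrt{\la}+\nu\ge\f{\sqrt{\la}}{2}$, and therefore $\f{2\sqrt{\la}}{\sqrt{\la}+\nu}\le 4$. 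For the second, since $\f{\sigma p}{2}=\f{4-p}{4}>0$ and $\la\ge 1$, one has $\la^{-\sigma p/2}\le 1$. These are exactly the two bounds already implicit in Lemma~\ref{Gla2}.

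It then remains to insert the elementary numerology $\sigma p=\f{4-p}{2}$, $\,2-\sigma p=\f{p}{2}$, and $p(\sigma-1)=\f{4-3p}{2}$ into the explicit expression for $C_{\la,\nu}$. The prefactor $\f{\pi^{1-s}}{s^{2-\sigma s}}$ becomes $\f{\pi^{1-p}}{p^{p/2}}$, matching the prefactor of $\widetilde{C}_{p}$. In the bracket, the first summand $\f{1}{p^{\sigma p}\la^{\sigma p/2}}\big(\f{2\sqrt{\la}}{\sqrt{\la}+\nu}\big)^{p}$ is bounded, after the two estimates above and using $p^{\sigma p}=p^{(4-p)/2}$, by $\f{4^{p}}{p^{(4-p)/2}}$; the second summand collapses to $\big(\f{2p}{4-p}\big)^{(3p-4)/2}e^{-(4-p)/2}\big(\f{2}{p}+\f{1}{e}\big)$, upon noting $\sigma^{p(\sigma-1)}=\big(\f{2p}{4-p}\big)^{(3p-4)/2}$ and $\f{1}{2-\sigma p}=\f{2}{p}$. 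Collecting these reproduces $\widetilde{C}_{p}$. I do not expect any genuine obstacle: the argument is routine and entirely parallel to Lemma~\ref{Gla2}, the only mild care being the bounding of the two $\la$-dependent factors, for which the lower bound $\la\ge\max\{1,4\nu^{2}\}$ is tailored.
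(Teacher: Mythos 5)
Your proof is correct and is essentially identical to the paper's, whose entire argument is the one-line instruction ``apply Lemma \ref{Lrnorm-Green} with $s=p$ and $\sigma=\f{4-p}{2p}<\f{2}{p}$''; your checks of the admissibility of $\sigma$ and the bounds $\f{2\sqrt{\la}}{\sqrt{\la}+\nu}\leq 4$, $\la^{-\sigma p/2}\leq 1$ under $\la\geq\max\{1,4\nu^{2}\}$ are exactly what that line leaves implicit. One minor point: your computation yields the first bracket term $\f{4^{p}}{p^{(4-p)/2}}$, whereas the paper's $\widetilde{C}_{p}$ displays $\f{4^{p}}{p^{(4-p)/p}}$ --- since $p^{(4-p)/p}\leq p^{(4-p)/2}$ for $2<p<4$, your constant is sharper and still implies the stated inequality, the paper's exponent apparently being a slip carried over from the $s=2$ computation of Lemma \ref{Gla2}, where $\sigma s$ and $2\sigma$ coincide.
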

\begin{proof}
The proof follows applying Lemma \ref{Lrnorm-Green} with $s=p$ and $\sigma=\f{4-p}{2p}<\f{2}{p}$.
\end{proof}

Even if the estimates in Lemma \ref{Gla2} and Lemma \ref{Glap} are not optimal in the exponents of $\la$, they are sufficient to prove a Gagliardo-Nirenberg type inequality and then boundedness from below of the energy functional $F_{\nu}$ when $2<p<4$.

We conclude this subsection by presenting the main properties of $\theta_{\la,\nu}$. 

\begin{lemma}
\label{lem:proptheta}
Let $\nu<0$ and $\la>\nu^{2}$. Then $\theta_{\la,\nu}$ is a strictly increasing function of $\la$ and
\[
 \lim_{\la\to(\nu^{2})^{+}}\theta_{\la,\nu}=-\infty, \quad \lim_{\la\to+\infty} \theta_{\la,\nu}=+\infty.
 \]
\end{lemma}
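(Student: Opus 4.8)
The plan is to treat $\theta_{\la,\nu}$ as a smooth function of $\la$ on $(\nu^{2},+\infty)$ by isolating the digamma argument. First I would set $a(\la):=\tfrac12+\tfrac{\nu}{2\sqrt\la}$ and record that, because $\nu<0$ and $\la>\nu^{2}$, one has $\sqrt\la>-\nu$ and hence $a(\la)\in(0,\tfrac12)$; in particular the argument of $\psi$ stays strictly positive on the whole interval, where $\psi$ and its derivative are analytic and well behaved. This reduction is what makes all three claims accessible through elementary properties of the digamma function.

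For strict monotonicity I would differentiate directly. Since $a'(\la)=-\tfrac{\nu}{4}\la^{-3/2}$ and $\tfrac{d}{d\la}\ln(2\sqrt\la)=\tfrac{1}{2\la}$, the chain rule gives
\[
\frac{d}{d\la}\theta_{\la,\nu}=\frac{1}{2\pi}\left(-\frac{\nu}{4\la^{3/2}}\,\psi'\!\left(\tfrac12+\tfrac{\nu}{2\sqrt\la}\right)+\frac{1}{2\la}\right).
\]
The key observation is that both summands are strictly positive: the trigamma function satisfies $\psi'(x)=\sum_{n\ge0}(x+n)^{-2}>0$ for $x>0$, and $-\nu>0$ forces the first term to be positive, while the second is manifestly positive. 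Hence $\theta_{\la,\nu}$ is strictly increasing. I would emphasize that the hypothesis $\nu<0$ is used precisely here, to fix the sign of $a'(\la)$; for $\nu>0$ the two contributions would compete and monotonicity would not follow so cheaply.

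For the boundary behaviour I would invoke the known asymptotics of $\psi$. As $\la\to(\nu^{2})^{+}$ we have $\sqrt\la\to-\nu$, so $a(\la)\to0^{+}$; since $\psi$ has a simple pole at the origin (with $\psi(x)\sim-1/x$ as $x\to0^{+}$), the term $\psi(a(\la))\to-\infty$, while $2\gamma+\ln(2\sqrt\la)\to 2\gamma+\ln(2|\nu|)$ stays finite, giving $\theta_{\la,\nu}\to-\infty$. As $\la\to+\infty$ we have $a(\la)\to\tfrac12^{-}$, so $\psi(a(\la))\to\psi(\tfrac12)$ is finite, whereas $\ln(2\sqrt\la)\to+\infty$ dominates and forces $\theta_{\la,\nu}\to+\infty$.

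No step presents a genuine obstacle: the only points requiring care are confirming that $a(\la)$ remains in $(0,\tfrac12)$ throughout, so that $\psi$ and $\psi'$ are evaluated away from the pole except in the controlled limit $\la\to(\nu^{2})^{+}$, and correctly attributing the two divergences to the pole of $\psi$ at $0$ and to the logarithm, respectively. Together, monotonicity and the two limits show that $\la\mapsto\theta_{\la,\nu}$ is a continuous strictly increasing bijection from $(\nu^{2},+\infty)$ onto $\R$, which is exactly the property exploited in Proposition~\ref{prop:Classification}(iii) to locate the eigenvalue $-\omega_{\nu}$.
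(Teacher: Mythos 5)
Your proof is correct and follows essentially the same route as the paper: differentiate $\theta_{\la,\nu}$ in $\la$ (your formula $\frac{1}{2\pi}\bigl(-\frac{\nu}{4\la^{3/2}}\psi'(a(\la))+\frac{1}{2\la}\bigr)$ agrees, after factoring, with the paper's $\frac{1}{8\pi\la^{3/2}}\bigl(|\nu|\psi'(\cdot)+2\sqrt{\la}\bigr)$), conclude strict positivity from $\psi'>0$, and obtain the two limits from the divergence of $\psi$ at $0^{+}$ and from the logarithmic growth $\theta_{\la,\nu}\sim\frac{\ln\la}{4\pi}$, respectively. Your explicit remark that the digamma argument stays in $(0,\tfrac12)$ is a small but welcome addition that the paper leaves implicit.
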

Moreover, there results that
\begin{equation}
\label{LB-UB-theta}
\f{1}{2\pi}\left(\log\left(\sqrt{\la}+\nu\right)-\f{2\sqrt{\la}}{\sqrt{\la}+\nu}+2\gamma\right)\leq  \theta_{\la,\nu}\leq \f{1}{2\pi}\left(\log\left(\sqrt{\la}+\nu\right)-\f{\sqrt{\la}}{\sqrt{\la}+\nu}+2\gamma\right).
\end{equation}
\begin{proof}
It is straightforward to check that 
\begin{equation*}
\f{d \theta_{\la,\nu}}{d\la}=\f{1}{8\pi\la^{\f{3}{2}}}\left(|\nu|\psi'\left(\f{1}{2}+\f{\nu}{2\sqrt{\la}}\right)+2\sqrt{\la}\right),
\end{equation*}
which is strictly positive since the digamma function $\psi$ is strictly increasing. Moreover, the limit as $\la\to (\nu^{2})^{+}$ follows from the fact that $\psi(x)\to -\infty$ as $x\to 0^{+}$, while the limit as $\la \to+\infty$ is a consequence of the fact that $\theta_{\la,\nu}\sim \f{\log(\la)}{4\pi}$ as $\la\to+\infty$.

Inequalities \eqref{LB-UB-theta} follow applying to the definition of $\theta_{\la,\nu}$ well known inequalities on $\psi(x)$ (see \cite[Eq. (2.2)]{Alzer-97}), i.e.
\begin{equation}
\f{1}{2x}\leq \log(x)-\psi(x)\leq \f{1}{x},\quad \forall\, x>0.
\end{equation}
\end{proof}

\subsection{Gagliardo-Nirenberg type inequalities}
In this subsection, we first recall a well-known Gagliardo-Nirenberg inequality and then prove a modified version of it.

The Gagliardo-Nirenberg inequality in dimension two guarantees the existence of a constant $K_{p}>0$ such that
\begin{equation}
\label{GN-stand}
\|\phi\|_{p}^{p}\leq K_{p}\|\nabla \phi\|_{p}^{p-2}\|\phi\|_{2}^{2}\quad \forall\, \phi \in H^{1}(\Rd).
\end{equation}

The next lemma provides a Gagliardo-Nirenberg type inequality for functions belonging to $\D_{\nu}\setminus H^{1}(\Rd)$ when $\nu<0$.


\begin{lemma}
\label{lem:GNnu<0}
Let $2<p<4$ and $\nu<0$. Then
\begin{equation}
\Vert u \Vert_p^p \leq  2^{p-1}\left[K_{p}\left(1+C_p|q|^{\f{4-p}{2}}\right) \Vert \nabla \phi_\lambda \Vert_2^{p-2}  + \widetilde{C}_{p}|q|^{\f{p}{2}}\right]\|u\|_{2}^{2}\label{GN-mod}
 \end{equation}
 for every $u=\phi_{\la}+q\G_{\la,\nu}$ for which $q\neq0$ and such that 
 \begin{equation*}
 \la\geq \max\left\{1, 4\nu^{2}, \f{|q|^{2}}{\|u\|_{2}^{\f{8}{p}}}\right\}.
 \end{equation*}
 \end{lemma}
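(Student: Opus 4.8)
The plan is to split $u$ into its regular and singular parts and treat each separately, applying the standard Gagliardo--Nirenberg inequality \eqref{GN-stand} to the $H^{1}$ component $\phi_\la$ and Lemmas \ref{Gla2}--\ref{Glap} to the Green's function component $q\G_{\la,\nu}$. First I would invoke the elementary convexity bound $|a+b|^{p}\le 2^{p-1}(|a|^{p}+|b|^{p})$ pointwise and integrate, obtaining
\begin{equation*}
\|u\|_{p}^{p}\le 2^{p-1}\big(\|\phi_\la\|_{p}^{p}+|q|^{p}\,\|\G_{\la,\nu}\|_{p}^{p}\big).
\end{equation*}
Since $\phi_\la\in H^{1}(\Rd)$, the first summand is immediately controlled by \eqref{GN-stand}, giving $\|\phi_\la\|_{p}^{p}\le K_{p}\|\nabla\phi_\la\|_{2}^{p-2}\|\phi_\la\|_{2}^{2}$. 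The remaining work is to convert $\|\phi_\la\|_{2}^{2}$ and $|q|^{p}\|\G_{\la,\nu}\|_{p}^{p}$ into multiples of $\|u\|_{2}^{2}$.

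The second step replaces $\|\phi_\la\|_{2}^{2}$ by a multiple of $\|u\|_{2}^{2}$. Writing $\phi_\la=u-q\G_{\la,\nu}$ and applying the triangle inequality reduces $\|\phi_\la\|_{2}^{2}$ to $\|u\|_{2}^{2}$ plus a contribution governed by $|q|^{2}\|\G_{\la,\nu}\|_{2}^{2}$. By Lemma \ref{Gla2} the latter is at most $C_{p}|q|^{2}\la^{-\f{p}{4}}$, and here the prescribed threshold $\la\ge |q|^{2}\|u\|_{2}^{-\f{8}{p}}$ enters decisively: it yields $\la^{-\f{p}{4}}\le \|u\|_{2}^{2}|q|^{-\f{p}{2}}$, hence $|q|^{2}\|\G_{\la,\nu}\|_{2}^{2}\le C_{p}|q|^{\f{4-p}{2}}\|u\|_{2}^{2}$. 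This produces the factor $1+C_{p}|q|^{\f{4-p}{2}}$ in front of $K_{p}\|\nabla\phi_\la\|_{2}^{p-2}$.

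The third step treats the singular part directly. By Lemma \ref{Glap} one has $|q|^{p}\|\G_{\la,\nu}\|_{p}^{p}\le \widetilde{C}_{p}|q|^{p}\la^{-\f{p}{4}}$, and the very same bound $\la^{-\f{p}{4}}\le \|u\|_{2}^{2}|q|^{-\f{p}{2}}$ coming from the threshold on $\la$ gives $|q|^{p}\|\G_{\la,\nu}\|_{p}^{p}\le \widetilde{C}_{p}|q|^{\f{p}{2}}\|u\|_{2}^{2}$, which is the second bracketed term. Collecting the three estimates inside the common prefactor $2^{p-1}$ assembles \eqref{GN-mod}.

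The main obstacle is the second step. Because the decomposition $u=\phi_\la+q\G_{\la,\nu}$ depends on $\la$ and $\phi_\la$ is not $L^{2}$-orthogonal to $\G_{\la,\nu}$, the norm $\|\phi_\la\|_{2}$ is not controlled by $\|u\|_{2}$ for free, so one cannot simply discard the singular part. The threshold $\la\ge |q|^{2}\|u\|_{2}^{-\f{8}{p}}$ is engineered precisely so that the $\la$-decay of $\|\G_{\la,\nu}\|_{2}$ and $\|\G_{\la,\nu}\|_{p}$ furnished by Lemmas \ref{Gla2}--\ref{Glap} compensates the powers of $|q|$ and renders the singular contributions subordinate to $\|u\|_{2}^{2}$; the exponent $\f{8}{p}$ is exactly what matches the $\la^{-\f{p}{4}}$ decay to the quadratic mass factor, which is why the $p$-dependent (and otherwise unnatural) form of the $L^{2}$ estimate in Lemma \ref{Gla2} is needed here.
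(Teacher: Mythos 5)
Your proposal is correct and follows essentially the same route as the paper's own proof: the convexity splitting $\|u\|_p^p\le 2^{p-1}(\|\phi_\la\|_p^p+|q|^p\|\G_{\la,\nu}\|_p^p)$, the standard inequality \eqref{GN-stand} on $\phi_\la$, Lemmas \ref{Gla2}--\ref{Glap} for the Green's function, the triangle inequality to trade $\|\phi_\la\|_2^2$ for $\|u\|_2^2$ plus a $|q|^2\|\G_{\la,\nu}\|_2^2$ term, and the threshold $\la\ge |q|^2\|u\|_2^{-8/p}$ to convert $\la^{-p/4}$ into $|q|^{-p/2}\|u\|_2^2$. Your identification of why the exponent $8/p$ and the $p$-dependent constant $C_p$ in Lemma \ref{Gla2} are needed matches the paper's reasoning exactly.
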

\begin{proof}
Let $\nu<0$ and consider $u=\phi_{\la}+q\G_{\la,\nu}$, $q\neq 0$, with $\la\geq\max\{1,4\nu^{2}\}$. Therefore, by \eqref{GN-stand}, Lemma \ref{Gla2}, Lemma \ref{Glap} and the triangle inequality
\[
	\begin{split}
		\Vert u \Vert_p^p  &=   \Vert \phi_\lambda + q \G_{\lambda,\nu} \Vert_p^p \leq 2^{p-1} \left( \Vert \phi_\lambda \Vert_p^p + |q|^p \Vert \G_{\lambda,\nu} \Vert_{{p}}^p\right) \\
		 &\leq  2^{p-1}K_{p} \Vert \nabla \phi_\lambda \Vert_2^{p-2} \Vert \phi_\lambda \Vert_2^2 + 2^{p-1}\widetilde{C}_{p} \frac{|q|^p}{\lambda^{\f{p}{4}}} \\
		&\leq  2^{p-1}K_{p} \Vert \nabla \phi_\lambda \Vert_2^{p-2} \Vert u \Vert_2^2 + 2^{p-1}K_p C_p  \Vert \nabla \phi_\lambda \Vert_2^{p-2} \frac{|q|^2}{\lambda^{\f{p}{4}}} 
		+ 2^{p-1}\widetilde{C}_{p} \frac{|q|^{p}}{\lambda^{\f{p}{4}}}.
		\end{split}
		\]
If in addition $\la\geq \f{|q|^{2}}{\|u\|_{2}^{\frac{8}{p}}}$, then
	\[
	\begin{split}
	\|u\|_{p}^{p}
		&\leq 2^{p-1} K_{p}\left(1+C_p|q|^{\f{4-p}{2}}\right) \Vert \nabla \phi_\lambda \Vert_2^{p-2} \Vert u \Vert_2^2 + 2^{p-1} \widetilde{C}_p |q|^{\f{p}{2}} \Vert u \Vert_2^{2},
	\end{split}
\]
entailing the thesis.
\end{proof}

\begin{remark}
Let us observe that, differently from the standard Gagliardo-Nirenberg inequalities, the inequality in Lemma \ref{lem:GNnu<0} is valid for $2<p<4$ only, since as $p\to 4^{-}$ both $C_{p}$ and $\widetilde{C}_{p}$ of Lemma \ref{Gla2} and Lemma \ref{Glap} diverge. This fact enables us to deal with the $L^{2}$-subcritical case only and not with the $L^{2}$-critical case $p=4$.
\end{remark}

\subsection{Hardy-type inequalities}
In this subsection, we first recall a Hardy-type inequality, proved by Edmunds and Triebel in \cite{Edmunds1999} in a more general context, and then we show a useful application for our purposes.

Before stating the result, given $r>0$ we denote $B_{r}:=\{x\in \R^{2}\,:\, |x|\leq r\}$.
\begin{proposition}
\label{Ed-Trieb}
There exists a constant $c>0$ such that $\forall\, \phi\in H^{1}(\R^{2})$
\begin{equation}
\int_{B_{1}}\f{|\phi(x)|^{2}}{|x|^{2}(1+|\ln|x||)^{2}}\,\ud x\leq c \|\phi\|_{H^{1}(\R^{2})}^{2}\, .
\end{equation}
\end{proposition}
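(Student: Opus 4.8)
The plan is to prove Proposition~\ref{Ed-Trieb} by a vector-field argument tuned to the logarithmic weight, which is exactly the mechanism that repairs the failure of the scale-invariant $|x|^{-2}$ Hardy inequality in dimension two. On $B_1$ one has $|\ln|x||=\ln(1/|x|)\ge 0$, so the weight is $w(x):=|x|^{-2}(1+\ln(1/|x|))^{-2}$. The key is to notice that the radial field
\[
V(x):=\f{x}{|x|^{2}(1+\ln(1/|x|))}
\]
satisfies $\mathrm{div}\,V=w$ on $B_1\setminus\{0\}$: writing $V=F(|x|)\,x/|x|$ with $F(r)=\big(r(1+\ln(1/r))\big)^{-1}$ and using the radial formula $\mathrm{div}(F(r)\,x/|x|)=r^{-1}(rF(r))'$, one computes $(rF(r))'=r^{-1}(1+\ln(1/r))^{-2}$, whence $\mathrm{div}\,V=w$. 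The field is moreover calibrated to the weight, since $|V(x)|=|x|^{-1}(1+\ln(1/|x|))^{-1}=w(x)^{1/2}$.

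First I would prove the estimate for $\phi\in C_c^\infty(\Rd)$ and extend afterwards by density. Fix $\ep\in(0,1)$ and integrate by parts on the annulus $B_1\setminus B_\ep$. Using $V\cdot n=F(1)=1$ on $\partial B_1$ and $V\cdot n=-F(\ep)$ on $\partial B_\ep$, the boundary produces the outer term $\int_{\partial B_1}|\phi|^2\,\ud S$ and the inner term $-F(\ep)\int_{\partial B_\ep}|\phi|^2\,\ud S$, the latter being nonpositive since $F(\ep)>0$; hence it can simply be discarded in an upper bound. Writing $I_\ep:=\int_{B_1\setminus B_\ep}|\phi|^2 w\,\ud x<\infty$ and using $|\nabla(|\phi|^2)|\le 2|\phi||\nabla\phi|$ together with $|V|=w^{1/2}$, the Cauchy--Schwarz inequality gives
\[
I_\ep \;\le\; 2\int_{B_1\setminus B_\ep} w^{1/2}|\phi|\,|\nabla\phi|\,\ud x+\int_{\partial B_1}|\phi|^2\,\ud S\;\le\; 2\,I_\ep^{1/2}\,\|\nabla\phi\|_{L^2(B_1)}+\int_{\partial B_1}|\phi|^2\,\ud S.
\]
Since $I_\ep$ is finite, Young's inequality absorbs the first term on the right and yields $I_\ep\le 4\|\nabla\phi\|_{L^2(B_1)}^2+2\int_{\partial B_1}|\phi|^2\,\ud S$, uniformly in $\ep$. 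Letting $\ep\to 0$, monotone convergence upgrades this to the same bound for $I:=\int_{B_1}|\phi|^2 w\,\ud x$.

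It remains to control the boundary term by the full $H^1$ norm, for which I would invoke the trace inequality on the annulus $\{1\le|x|\le 2\}$, giving $\int_{\partial B_1}|\phi|^2\,\ud S\le C_T\|\phi\|_{H^1(\Rd)}^2$; this establishes the claim with $c=\max\{4,2C_T\}$ for smooth $\phi$. For a general $\phi\in H^1(\Rd)$ I would take smooth $\phi_n\to\phi$ in $H^1$: the right-hand side converges, and along an a.e.-convergent subsequence Fatou's lemma applied to the nonnegative integrands $|\phi_n|^2 w$ gives $I(\phi)\le\liminf_n I(\phi_n)\le c\|\phi\|_{H^1(\Rd)}^2$.

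The conceptual crux, and where I expect the real difficulty to lie, is the construction of $V$: the identity $\mathrm{div}\,V=w$ works only because the logarithm enters with exponent exactly $2$, which is what balances the first-order term produced by integration by parts — this is the analytic reason the logarithmic correction rescues the borderline two-dimensional Hardy inequality. The remaining care is entirely in the limiting procedure near the singularity and for non-continuous $H^1$ functions, which is handled cleanly by the annular integration by parts (whose inner boundary term is conveniently nonpositive) together with monotone convergence and Fatou.
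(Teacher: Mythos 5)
Your proof is correct, but note that the paper does not prove this proposition at all: it is imported verbatim from Edmunds and Triebel \cite{Edmunds1999}, where it arises as a special case of a general theory of sharp borderline Sobolev embeddings and Hardy inequalities with logarithmic weights. Your vector-field argument is therefore a genuinely different (and self-contained) route. I checked the crux: writing $V=F(r)\,x/|x|$ with $F(r)=\bigl(r(1+\ln(1/r))\bigr)^{-1}$, the two-dimensional radial divergence formula $\mathrm{div}\,V=r^{-1}(rF(r))'$ together with $rF(r)=(1-\ln r)^{-1}$ indeed yields $\mathrm{div}\,V=w$ on $B_1\setminus\{0\}$, where $w(x):=|x|^{-2}(1+|\ln|x||)^{-2}$, and $|V|=w^{1/2}$; the nonpositivity of the inner boundary term on $\partial B_\ep$, the absorption via Young's inequality (legitimate precisely because your $I_\ep$ is finite on each annulus), the trace bound controlling $\int_{\partial B_1}|\phi|^2\,\mathrm{d}S$ by $\|\phi\|_{H^1(\mathbb{R}^2)}^2$, and the closing density-plus-Fatou step are all sound. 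As for what each approach buys: the paper's citation is shorter and situates the inequality in a general framework it does not otherwise need, whereas your proof is elementary, keeps the paper self-contained, and makes transparent the analytic reason the logarithmic correction with exponent exactly $2$ rescues the failed scale-invariant $|x|^{-2}$ Hardy inequality in dimension two --- the identity $\mathrm{div}\,V=w$ closes only for that exponent. One cosmetic slip: the final constant should be $c=4+2C_T$ (a sum), since with $c=\max\{4,2C_T\}$ your chain of estimates only gives $I\leq 2c\|\phi\|_{H^1(\mathbb{R}^2)}^2$; this is immaterial to the statement, which asserts existence of some constant.
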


The next result is an application of Proposition \ref{Ed-Trieb}.
\begin{lemma}\label{lem:Quasi-Hardy}
	Let $\phi \in H^1(\mathbb{R}^2)$. Then, for every $0<\ep\leq e^{-1}$, there results
	\begin{equation}
		\int_{\mathbb{R}^2} \frac{|\phi(x)|^2}{|x|}  \, \ud x \ \leq c_{\ep} \Vert \phi \Vert^2_{H^1(\mathbb{R}^2)} + \frac{1}{\varepsilon} \Vert \phi \Vert_{2}^2,
	\end{equation}
	 with $c_{\ep}:=\varepsilon (1+|\ln \varepsilon|)^2$ and $c>0$ given by Proposition \ref{Ed-Trieb}.
\end{lemma}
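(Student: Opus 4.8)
The plan is to split the integral over $\R^{2}$ at the radius $|x|=\ep$ and estimate the exterior and interior regions by different means: on the exterior I would use only the crude pointwise bound $\tfrac{1}{|x|}\le\tfrac1\ep$, while on the interior I would invoke the Edmunds--Triebel inequality of Proposition~\ref{Ed-Trieb}. Concretely, writing
\[
\int_{\R^{2}}\f{|\phi(x)|^{2}}{|x|}\,\ud x=\int_{B_{\ep}}\f{|\phi(x)|^{2}}{|x|}\,\ud x+\int_{\R^{2}\setminus B_{\ep}}\f{|\phi(x)|^{2}}{|x|}\,\ud x,
\]
the exterior piece is immediately controlled: on $\R^{2}\setminus B_{\ep}$ one has $|x|\ge\ep$, so $\tfrac{1}{|x|}\le\tfrac1\ep$ and the second integral is at most $\tfrac1\ep\|\phi\|_{2}^{2}$, which is exactly the second term in the asserted estimate.

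For the interior piece the key step is the algebraic factorization
\[
\f{1}{|x|}=\f{1}{|x|^{2}(1+|\ln|x||)^{2}}\cdot |x|(1+|\ln|x||)^{2},
\]
which isolates the weight appearing in Proposition~\ref{Ed-Trieb}. It then remains to bound the remaining factor $g(|x|):=|x|(1+|\ln|x||)^{2}$ uniformly on $B_{\ep}$. Writing $g(r)=r(1-\ln r)^{2}$ for $0<r<1$, a direct differentiation gives $g'(r)=-(1-\ln r)(1+\ln r)$, so $g$ is strictly increasing on $(0,e^{-1})$ and its maximum over $(0,\ep]$ is attained at the endpoint $r=\ep$ precisely because we assume $\ep\le e^{-1}$. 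Hence $g(|x|)\le g(\ep)=\ep(1+|\ln\ep|)^{2}=c_{\ep}$ on $B_{\ep}$; since the integrand is nonnegative and $B_{\ep}\subseteq B_{1}$ (again using $\ep\le e^{-1}<1$), Proposition~\ref{Ed-Trieb} yields
\[
\int_{B_{\ep}}\f{|\phi(x)|^{2}}{|x|}\,\ud x\le c_{\ep}\int_{B_{1}}\f{|\phi(x)|^{2}}{|x|^{2}(1+|\ln|x||)^{2}}\,\ud x\le c\,c_{\ep}\,\|\phi\|_{H^{1}(\R^{2})}^{2}.
\]
Adding the two contributions produces the claimed inequality (with the constant $c>0$ of Proposition~\ref{Ed-Trieb} multiplying $c_{\ep}$).

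The only genuinely delicate point is the monotonicity analysis of $g$: one must check that the hypothesis $\ep\le e^{-1}$ is exactly what confines the whole interval $(0,\ep]$ to the increasing branch of $g$, whose global maximum on $(0,1)$ is located at $r=e^{-1}$. This is what forces the supremum of the weight over $B_{\ep}$ to be realized at $r=\ep$ and to equal $c_{\ep}$, and it is the mechanism by which the constant in front of $\|\phi\|_{H^{1}}^{2}$ can be made small by taking $\ep$ small. Everything else is a routine splitting-and-estimating argument, and in the application one ultimately balances this small factor $c_{\ep}$ against the blow-up $\tfrac1\ep$ in the $L^{2}$ term.
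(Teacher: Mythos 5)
Your proposal is correct and follows essentially the same route as the paper: split at $|x|=\ep$, bound the exterior by $\tfrac{1}{\ep}\|\phi\|_{2}^{2}$, and on $B_{\ep}$ factor out the weight $|x|(1+|\ln|x||)^{2}$, use its monotonicity on $(0,e^{-1}]$ to bound it by $c_{\ep}$, enlarge the domain to $B_{1}$ and apply Proposition \ref{Ed-Trieb}. Your explicit computation of $g'(r)=-(1-\ln r)(1+\ln r)$ just fills in the monotonicity claim the paper states without proof, and your observation that the constant $c$ of Proposition \ref{Ed-Trieb} multiplies $c_{\ep}$ matches the paper's own final estimate.
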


\begin{proof}
Let $\phi\in H^{1}(\Rd)$. Then, for every $0<\ep\leq 1$
	\[
		\begin{split}
			\int_{\mathbb{R}^2} \frac{|\phi(x)|^2}{|x|}  \, \ud x &= \int_{B_{\varepsilon}} \frac{|\phi(x)|^2}{|x|} \, \ud x + \int_{\mathbb{R}^2 \setminus B_{\varepsilon}} \frac{|\phi(x)|^2}{|x|} \, \ud x \\
			& \leq \int_{B_\varepsilon} \frac{|\phi(x)|^2 |x| (1+|\ln|x||)^2}{|x|^2 (1+|\ln|x||)^2} \, \ud x + \frac{1}{\varepsilon} \int_{\mathbb{R}^2} |\phi(x)|^2 \, \ud x.
		\end{split}
	\]
Note now that the function $\varphi(t):= t (1+|\ln(t)|)^2$ is monotone increasing for $0<t\leq e^{-1}$, whence by using a $L^\infty-L^1$ H\"older inequality, enlarging the domain of integration to $B_{1}$ and applying Proposition \ref{Ed-Trieb}, there results that for every $0<\ep\leq e^{-1}$
	\[
		\begin{split}
			  \int_{\mathbb{R}^2} \frac{1}{|x|} |\phi(x)|^2 \, \ud x &\leq \varepsilon (1+|\ln\varepsilon|)^2 \int_{B_1} \frac{|\phi(x)|^2}{|x|^2(1+\ln|x|)^2} \, \ud x + \frac{1}{\varepsilon} \Vert \phi \Vert_{2}^2 \\
			&\leq \varepsilon(1+|\ln \varepsilon|)^2 c \Vert \phi \Vert_{H^1(\mathbb{R}^2)}^2 + \frac{1}{\varepsilon} \Vert \phi \Vert_{2}^2,
		\end{split}
	\]
	with $c>0$ is the same as in Proposition \ref{Ed-Trieb}.
\end{proof}

\begin{remark}
	Note that $\lim_{\varepsilon \to 0} c_\varepsilon =0$.
\end{remark}

\subsection{Some properties of the symmetric decreasing rearrangement}
\label{subsec-rearr}

We recall here the definition and the properties of the \emph{radially symmetric decreasing rearrangement} of a function in $\R^d$. 

Given $f:\R^d\to[0,+\infty)$ a measurable function \emph{vanishing at infinity}, i.e. $|\{f>t\}|:=|\{x\in\R^d:f(x)>t\}|<+\infty$, for every $t>0$, we call the \emph{radially symmetric decreasing rearrangement} of $f$ the function $f^*:\R^d\to[0,+\infty)$ defined by
\begin{equation}
\label{eq-rearr}
f^*(x)=\int_0^{\infty} \mathds{1}_{\{f>t\}^*}(x)\,\ud t,
\end{equation}
with $\{f>t\}^*$ the open ball centred at zero such that $|\{f>t\}^{*}|=|\{f>t\}|$ and  $\mathds{1}_{\{f>t\}^*}$ the characteristic function of $\{f>t\}^*$.

We recall now some well known properties of $f^{*}$.
First of all, 
\begin{equation}
\label{equimeas}
\|f^*\|_p=\|f\|_p,\qquad\forall f\in L^p(\R^d),\: f\geq0,\quad \forall\, p\geq1.
\end{equation}

Moreover, the well known  \emph{Hardy-Littlewood inequality} holds:  given two measurable functions $f,g:\R^d\to[0,+\infty)$ vanishing at infinity, there results
\begin{equation}
\label{HL-ineq}
\int_{\R^d}f(x)g(x)\,\ud x\le \int_{\R^d}f^*(x)g^*(x)\ud x.
\end{equation} 

By applying inequality \eqref{HL-ineq} (see \cite[Proposition 2.4]{Adami-Boni-Carlone-Tentarelli-2022}), there results that for every $f$, $g\in L^p\left(\R^d; [0,+\infty)\right)$, with $p>1$,
\begin{equation}
\label{ineqf+g}
\int_{\R^d} |f+g|^p \, \ud x\leq \int_{\R^d} |f^*+g^*|^p\,\ud x.
\end{equation} 

Moreover, if $f$ is radially symmetric and strictly decreasing, then the equality in \eqref{HL-ineq} or \eqref{ineqf+g} implies that $g=g^*$ a.e. on $\R^d$.

Finally, the \emph{P\'olya-Szeg\H{o} inequality} holds, i.e. if $f\in H^1(\R^d)$, then $f^*\in H^1(\R^d)$ and in particular
\begin{equation}
\label{PS}
\|\nabla f^*\|_2\le \|\nabla f\|_2.
\end{equation}

\section{Minimizers of the action: proof of Theorem \ref{ex-min-act}}

This section is devoted to the proof of Theorem \ref{ex-min-act}.

We note that, by using \eqref{I-nu-om}, the action functional \eqref{S-nu-om} can be rewritten as
\begin{equation}
\label{S-nu-om-2}
S_{\nu}^{\omega}(v)=\f{p-2}{2p}\|v\|_{p}^{p}+\f{1}{2}I_{\nu}^{\omega}(v)=\f{p-2}{2p}\left(Q_{\nu}(v)+\omega\|v\|_{2}^{2}\right)+\f{1}{p}I_{\nu}^{\omega}(v),
\end{equation}
and thus, \eqref{eq:S_inf} can be rewritten as
\begin{equation}\label{eq:Triangolo}
\f{2p}{p-2}d_{\nu}(\omega)=\inf_{v\in N_{\nu}^{\omega}}\|v\|_{p}^{p}=\inf_{v\in N_{\nu}^{\omega}} \left(Q_{\nu}(v)+\omega\|v\|_{2}^{2}\right) \, .
\end{equation}
\begin{lemma}
\label{lem:dnu<normp}
Let $\nu<0$ and $\omega>\omega_{\nu}$. If $v\in \D_{\nu}$ and $I_{\nu}^{\omega}(v)<0$, then
\begin{equation*}
\|v\|_{p}^{p}>\f{2p}{p-2}d_{\nu}(\omega)\quad\text{and}\quad Q_{\nu}(v)+\omega\|v\|_{2}^{2}>\f{2p}{p-2}d_{\nu}(\omega).
\end{equation*}
\end{lemma}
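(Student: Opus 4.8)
The plan is to use the scaling of the functionals along the ray $t\mapsto tv$ and to project $v$ onto the Nehari manifold by a dilation. Writing $A:=Q_{\nu}(v)+\omega\|v\|_{2}^{2}$ and $B:=\|v\|_{p}^{p}$, the homogeneity of $Q_{\nu}$ (quadratic) and of the $L^{p}$ term gives
\[
I_{\nu}^{\omega}(tv)=t^{2}A-t^{p}B,\qquad t>0.
\]
The assumption $I_{\nu}^{\omega}(v)<0$ is exactly $A<B$; moreover it forces $v\neq0$, hence $B=\|v\|_{p}^{p}>0$. Once I also know $A>0$ (see next paragraph), the equation $I_{\nu}^{\omega}(tv)=0$ has the unique positive solution $t_{*}=(A/B)^{1/(p-2)}$, and since $A<B$ and $p>2$ one has $t_{*}<1$.

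First I would verify the crucial positivity $A>0$, which is where the hypothesis $\omega>\omega_{\nu}$ is used. By Proposition \ref{prop:Classification}(iii), $-\omega_{\nu}=\inf_{w\in\D_{\nu}\setminus\{0\}}Q_{\nu}(w)/\|w\|_{2}^{2}$, so $Q_{\nu}(v)\ge-\omega_{\nu}\|v\|_{2}^{2}$ and therefore
\[
A=Q_{\nu}(v)+\omega\|v\|_{2}^{2}\ge(\omega-\omega_{\nu})\|v\|_{2}^{2}>0,
\]
the last step because $\omega>\omega_{\nu}$ and $v\neq0$ gives $\|v\|_{2}>0$.

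With $A,B>0$ and $t_{*}\in(0,1)$, the rescaled function $t_{*}v$ lies in $N_{\nu}^{\omega}$. Using the characterisation \eqref{eq:Triangolo} of $\f{2p}{p-2}d_{\nu}(\omega)$ as the infimum over $N_{\nu}^{\omega}$ of both $Q_{\nu}(\cdot)+\omega\|\cdot\|_{2}^{2}$ and $\|\cdot\|_{p}^{p}$, together with the Nehari identity $Q_{\nu}(t_{*}v)+\omega\|t_{*}v\|_{2}^{2}=t_{*}^{2}A$ and the bound $t_{*}<1$, I obtain
\[
\f{2p}{p-2}d_{\nu}(\omega)\le Q_{\nu}(t_{*}v)+\omega\|t_{*}v\|_{2}^{2}=t_{*}^{2}A<A=Q_{\nu}(v)+\omega\|v\|_{2}^{2},
\]
which is the second inequality. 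The first is obtained in the same way from $\f{2p}{p-2}d_{\nu}(\omega)\le\|t_{*}v\|_{p}^{p}=t_{*}^{p}B<B=\|v\|_{p}^{p}$.

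The only real obstacle is establishing $A>0$; the rest is elementary scaling. This positivity is precisely the content of the spectral threshold $\omega_{\nu}$: if $\omega\le\omega_{\nu}$ the ray $tv$ might fail to meet the Nehari manifold and the projection argument would collapse.
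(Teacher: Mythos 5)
Your proof is correct and follows essentially the same route as the paper's: both arguments establish the positivity $Q_{\nu}(v)+\omega\|v\|_{2}^{2}\geq(\omega-\omega_{\nu})\|v\|_{2}^{2}>0$ from the spectral characterisation of $\omega_{\nu}$, then scale $v$ by a factor in $(0,1)$ to land on the Nehari manifold, and conclude via the characterisation \eqref{eq:Triangolo} of $\f{2p}{p-2}d_{\nu}(\omega)$. The only cosmetic difference is that you compute the projection parameter $t_{*}=(A/B)^{1/(p-2)}$ explicitly and write out both inequalities, whereas the paper merely asserts the existence of $\beta^{*}\in(0,1)$ and treats the second inequality as analogous.
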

\begin{proof}
Let $v\in \D_{\nu}$ such that $I_{\nu}^{\omega}(v)<0$. Given $\beta>0$, we have that
\begin{equation*}
I_{\nu}^{\omega}(\beta v)=\beta^{2}\left(Q_{\nu}(v)+\omega\|v\|_{2}^{2}\right)-\beta^{p}\|v\|_{p}^{p}.
\end{equation*}
Since $Q_{\nu}(v)+\omega\|v\|_{2}^{2}\geq (\omega-\omega_{\nu})\|v\|_{2}^{2}>0$, there exists $\beta^{*}\in (0,1)$ such that $I_{\nu}^{\omega}(\beta^{*}v)=0$ and
\begin{equation*}
\f{2p}{p-2}d_{\nu}(\omega)\leq \|\beta^{*}v\|_{p}^{p}=(\beta^{*})^{p}\|v\|_{p}^{p}<\|v\|_{p}^{p}.
\end{equation*}
The other strict inequality can be obtained in a similar way.
\end{proof}
\begin{remark}
\label{equiv-form}
By Lemma \ref{lem:dnu<normp} and \eqref{eq:Triangolo}, it follows that for every $\omega>\omega_{\nu}$
\begin{equation}
\begin{split}
d_{\nu}(\omega)=&\inf\left\{\f{p-2}{2p}\|v\|_{p}^{p}\,:\, v\in \D_{\nu}\setminus\{0\},\, I_{\nu}^{\omega}(v)\leq 0\right\}\\
=&\inf\left\{\f{p-2}{2p}\left(Q_{\nu}(v)+\omega\|v\|_{2}^{2}\right)\,:\, v\in \D_{\nu}\setminus\{0\},\, I_{\nu}^{\omega}(v)\leq 0\right\}.
\end{split}
\end{equation}
\end{remark}

Let us now introduce the energy and the action functional without point interaction. The energy functional is defined as 
\begin{equation}
E_{\nu}(v):=\f{1}{2}\|\na v\|_{2}^{2}+\f{\nu}{2}\left\||x|^{-\f{1}{2}}v\right\|_{2}^{2}-\f{1}{p}\|v\|_{p}^{p} \, .
\end{equation}
It is then convenient to recast our minimization problem as the problem of existence of $u\in H^{1}_{\mu}(\R^{2}):=H^{1}(\R^{2})\cap\{\|u\|_{2}^{2}=\mu\}$ such that
\begin{equation*}
E_{\nu}(u)=\Eps_{\nu}(\mu):=\inf_{v\in H^{1}_{\mu}(\R^{2})}E_{\nu}(v).
\end{equation*}

One can define also the action functional
\begin{equation}
\label{act-nodelta}
\widetilde{S}_{\nu}^{\omega}(v):=E_{\nu}(v)+\f{\omega}{2}\|v\|_{2}^{2}
\end{equation}
and the associated Nehari Manifold
\begin{equation}
\widetilde{N}_{\nu}^{\omega}:=\left\{v\in H^{1}(\Rd)\setminus\{0\}\,:\, \widetilde{I}_{\nu}^{\omega}(v)=0 \right\},
\end{equation}
where
\begin{equation*}
\widetilde{I}_{\nu}^{\omega}(v):=\|\na v\|_{2}^{2}+\nu\left\||x|^{-\f{1}{2}}v\right\|_{2}^{2}+\omega\|v\|_{2}^{2}-\|v\|_{p}^{p}.
\end{equation*}

A minimizer of the action \eqref{act-nodelta} at frequency $\omega$ is a function $u\in \widetilde{N}_{\nu}^{\omega}$ such that
\begin{equation*}
\widetilde{S}_{\nu}^{\omega}(u)=\widetilde{d}_{\nu}(\omega):=\inf_{v\in \widetilde{N}_{\nu}^{\omega}}\widetilde{S}_{\nu}^{\omega}(v)
\end{equation*}

We observe that for every $v\in H^{1}(\Rd)$ it holds $S_{\nu}^{\omega}(v)=\widetilde{S}_{\nu}^{\omega}(v)$ and $N_{\nu}^{\omega}\cap H^{1}(\Rd)=\widetilde{N}_{\nu}^{\omega}$, hence 
\begin{equation}
\label{0<dnu<dnutilde}
0\leq d_{\nu}(\omega)\leq \widetilde{d}_{\nu}(\omega)\quad \forall \omega \in \R.
\end{equation}
\begin{remark}
As observed for $d_{\nu}(\omega)$ in Remark \ref{equiv-form}, it holds that for every $\omega>\nu^{2}$
\begin{equation}
\begin{split}
\widetilde{d}_{\nu}(\omega)&=\inf\left\{\f{p-2}{2p}\|v\|_{p}^{p}\,:\, v\in H^{1}(\Rd)\setminus\{0\},\, \widetilde{I}_{\nu}^{\omega}(v)\leq 0\right\}\\
&=\inf\left\{\f{p-2}{2p}\left(\|\na v\|_{2}^{2}+\nu\left\||x|^{-\f{1}{2}}v\right\|_{2}^{2}+\omega\|v\|_{2}^{2}\right)\,:\, v\in H^{1}(\Rd)\setminus\{0\},\, \widetilde{I}_{\nu}^{\omega}(v)\leq 0\right\}
\end{split}
\end{equation}
\end{remark}

The next proposition collects some results proved in \cite[Proposition 1.2]{Dinh-2021} about the minimizers of the action \eqref{act-nodelta}. 

\begin{proposition}
\label{ex-actmin-attr}
Let $p>2$, $\nu<0$ and $\omega\in\R$.  If $\omega>\nu^{2}$, then $\widetilde{d}_{\nu}(\omega)>0$ and $\widetilde{d}_{\nu}(\omega)$ is attained. Moreover, the minimizers of $\widetilde{d}_{\nu}(\omega)$ are unique, positive and radially symmetric, up to a multiplication by a phase factor.
\end{proposition}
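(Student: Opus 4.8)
The statement is the specialisation to our setting of \cite[Proposition 1.2]{Dinh-2021}; I would reconstruct it along the following standard lines, exploiting the preliminaries already available.

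\emph{Coercivity and strict positivity of $\widetilde d_\nu(\omega)$.} First I would show that for $\omega>\nu^{2}$ the quadratic form
\[
B(v):=\|\na v\|_{2}^{2}+\nu\||x|^{-1/2}v\|_{2}^{2}+\omega\|v\|_{2}^{2}
\]
is equivalent to $\|v\|_{H^{1}(\Rd)}^{2}$ on $H^{1}(\Rd)$. The lower bound $B(v)\geq(\omega-\nu^{2})\|v\|_{2}^{2}$ is immediate from the spectral identity $-\nu^{2}=\inf_{v\in H^{1}(\Rd)\setminus\{0\}}(\|\na v\|_{2}^{2}+\nu\||x|^{-1/2}v\|_{2}^{2})/\|v\|_{2}^{2}$ recorded in Proposition \ref{prop:Classification}(iii); combining it, in a suitable convex combination, with the relative form-boundedness of the Coulomb term supplied by the Hardy-type Lemma \ref{lem:Quasi-Hardy} (choosing $\ep$ so small that $|\nu|c_{\ep}<\tfrac12$) yields $B(v)\geq c\,\|v\|_{H^{1}(\Rd)}^{2}$ for some $c=c(\nu,\omega)>0$. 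On $\widetilde N_{\nu}^{\omega}$ one has $\|v\|_{p}^{p}=B(v)\geq c\|v\|_{H^{1}}^{2}\geq c\,C_{S}^{-2}\|v\|_{p}^{2}$ by Sobolev embedding, so $\|v\|_{p}$ is bounded away from $0$; since $\widetilde S_{\nu}^{\omega}(v)=\tfrac{p-2}{2p}\|v\|_{p}^{p}$ on the manifold, this gives $\widetilde d_{\nu}(\omega)>0$.

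\emph{Existence via rearrangement and radial compactness.} I would work with the relaxed characterisation of $\widetilde d_{\nu}(\omega)$ stated in the Remark above, minimising $\tfrac{p-2}{2p}\|v\|_{p}^{p}$ over $\{v\neq0:\widetilde I_{\nu}^{\omega}(v)\leq0\}$. For a minimising sequence $(v_{n})$ I would pass to the symmetric decreasing rearrangements $v_{n}^{*}$: by \eqref{PS}, \eqref{equimeas} and the Hardy--Littlewood inequality \eqref{HL-ineq} applied to $|x|^{-1}$ (which equals its own rearrangement, so that the attractive Coulomb energy, $\nu<0$, can only decrease), the rearrangement lowers $\widetilde I_{\nu}^{\omega}$ while leaving $\|v_{n}\|_{p}$ unchanged; hence one may assume $v_{n}=v_{n}^{*}\geq0$. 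Coercivity then bounds $(v_{n})$ in $H^{1}(\Rd)$, and extracting a subsequence converging weakly in $H^{1}$ and---by the compact embedding of the radial subspace of $H^{1}(\Rd)$ into $L^{p}(\Rd)$, valid for every $2<p<\infty$ in dimension two---strongly in $L^{p}$, the limit $u$ satisfies $\|u\|_{p}^{p}=\lim\|v_{n}\|_{p}^{p}=\tfrac{2p}{p-2}\widetilde d_{\nu}(\omega)>0$, so $u\neq0$. Weak lower semicontinuity of $\|\na\cdot\|_{2}^{2}$ and $\|\cdot\|_{2}^{2}$, together with the weak continuity of the Coulomb term along the bounded radial sequence (again via Lemma \ref{lem:Quasi-Hardy} combined with radial compactness), give $\widetilde I_{\nu}^{\omega}(u)\leq0$, so $u$ is admissible and attains the relaxed infimum; a scaling argument as in Lemma \ref{lem:dnu<normp} forces $\widetilde I_{\nu}^{\omega}(u)=0$, i.e. $u\in\widetilde N_{\nu}^{\omega}$ and $\widetilde S_{\nu}^{\omega}(u)=\widetilde d_{\nu}(\omega)$.

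\emph{Qualitative properties and the main obstacle.} Nonnegativity follows by replacing $u$ with $|u|$, which increases none of the terms, and strict positivity from the strong maximum principle applied to the Euler--Lagrange equation $-\lap u+\nu|x|^{-1}u+\omega u=u^{p-1}$; radial symmetry and monotonicity follow from the strictness of the rearrangement inequalities, since equality in \eqref{PS}/\eqref{ineqf+g} forces $u=u^{*}$. The genuinely delicate point is \emph{uniqueness} up to phase: it reduces to the uniqueness of the positive, radially decreasing solution of the above semilinear ODE with the singular Coulomb potential, which---unlike existence and symmetry---is not a soft variational consequence and demands a dedicated nondegeneracy or ODE-shooting analysis. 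This is precisely the part developed in \cite{Dinh-2021}, and it is the step I would expect to be the main obstacle in a self-contained proof.
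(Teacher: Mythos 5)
The paper never proves this proposition: it is imported wholesale from \cite[Proposition 1.2]{Dinh-2021} (the text says it ``collects some results proved in'' that reference), so there is no internal argument to compare yours against. Your reconstruction of the variational part is essentially sound, and it goes beyond what the paper itself does. The convex-combination coercivity argument is correct: splitting $B$ into $tB+(1-t)B$, using the spectral bound $\|\na v\|_{2}^{2}+\nu\||x|^{-1/2}v\|_{2}^{2}\geq-\nu^{2}\|v\|_{2}^{2}$ on the first piece and Lemma \ref{lem:Quasi-Hardy} on the second, with $t$ close to $1$, indeed yields $B(v)\geq c\|v\|_{H^{1}(\Rd)}^{2}$. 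The rearrangement step is also correct: since $1/|x|$ coincides with its own symmetric decreasing rearrangement and $\nu<0$, the Hardy--Littlewood inequality \eqref{HL-ineq} makes the Coulomb term decrease under rearrangement, so $\widetilde{I}_{\nu}^{\omega}(v^{*})\leq\widetilde{I}_{\nu}^{\omega}(v)$; radial compactness of $H^{1}$ in $L^{p}$ and the weak continuity of the Coulomb term (the paper proves the analogous statement \eqref{phin-phi-Coul} by exactly the splitting you have in mind) then close the existence argument. Two remarks. First, for radial symmetry you appeal to the equality case of P\'olya--Szeg\H{o} \eqref{PS}, which by itself does \emph{not} force $u=u^{*}$ (that requires Brothers--Ziemer type results); the clean route in this problem --- and the one the paper itself uses in Proposition \ref{ex-Enu} --- is the equality case of \eqref{HL-ineq} with the strictly radially decreasing weight $1/|x|$, which does force $|u|=|u|^{*}$ a.e. Second, you are right that uniqueness is the genuinely hard statement and cannot be extracted from these soft arguments; since the paper also takes uniqueness (and the rest of the proposition) on faith from \cite{Dinh-2021}, your proposal matches the paper in deferring exactly that point, while supplying a self-contained proof of the parts (positivity of $\widetilde{d}_{\nu}(\omega)$, attainment, positivity and symmetry of minimizers) that the paper does not.
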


The next lemma shows the mutual relation between $d_{\nu}(\omega)$ and $\widetilde{d}_{\nu}(\omega)$.

\begin{lemma}
\label{dnu<dtildenu}
Let $p>2$, $\nu< 0$ and $\alpha\in\R$. Then
\begin{equation}
\label{eq-levcomp}
d_{\nu}(\omega)<\widetilde{d}_{\nu}(\omega),\quad\forall \omega>\nu^{2}.
\end{equation} 
\end{lemma}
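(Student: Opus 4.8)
The inequality $d_{\nu}(\omega)\le\widetilde d_{\nu}(\omega)$ is already recorded in \eqref{0<dnu<dnutilde}, so the task is only to make it strict, and the natural idea is to exploit the extra direction $\G_{\la,\nu}$ which is available in $\D_{\nu}$ but not in $H^{1}(\Rd)$. The plan is to start from a minimizer $w$ of $\widetilde d_{\nu}(\omega)$, which exists and may be taken positive and radial by Proposition \ref{ex-actmin-attr}, and to push it into the larger form domain. Since $w\in\widetilde N_{\nu}^{\omega}$ one has the identity $\|\na w\|_{2}^{2}+\nu\||x|^{-1/2}w\|_{2}^{2}+\omega\|w\|_{2}^{2}=\|w\|_{p}^{p}=:P>0$. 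I would then fix the regularising parameter to be $\la=\omega$ (admissible since $\omega>\nu^{2}$) and consider the one-parameter family
\[
u_{\ep}:=w+\ep\,\G_{\omega,\nu}\in\D_{\nu},
\]
whose canonical decomposition in the sense of \eqref{dom-form} is $\phi_{\omega}=w$ with charge $q=\ep$.

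The computational heart of the argument is that the choice $\la=\omega$ kills the first-order term of the quadratic part. Indeed, evaluating \eqref{eq:FormValue} at $\la=\omega$ the contribution $\omega(\|\phi_{\omega}\|_{2}^{2}-\|u_{\ep}\|_{2}^{2})$ cancels $\omega\|u_{\ep}\|_{2}^{2}$ exactly, so that
\[
A(\ep):=Q_{\nu}(u_{\ep})+\omega\|u_{\ep}\|_{2}^{2}=P+(\alpha+\theta_{\omega,\nu})\,\ep^{2},
\]
giving $A(0)=P$ and, crucially, $A'(0)=0$. On the other hand $\ep\mapsto\|u_{\ep}\|_{p}^{p}$ is differentiable at $\ep=0$ with
\[
\f{d}{d\ep}\|u_{\ep}\|_{p}^{p}\Big|_{\ep=0}=p\int_{\Rd}w^{p-1}\,\G_{\omega,\nu}\,\ud x>0,
\]
the integral being finite by Hölder combined with $\G_{\omega,\nu}\in L^{p}(\Rd)$ from Lemma \ref{Glap}, and strictly positive since both $w$ and $\G_{\omega,\nu}$ are positive.

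To conclude I would invoke the scaling reformulation underlying Remark \ref{equiv-form}: whenever $A(\ep)>0$, the rescaled function $t_{\ep}u_{\ep}$ with $t_{\ep}=(A(\ep)/\|u_{\ep}\|_{p}^{p})^{1/(p-2)}$ lies on $N_{\nu}^{\omega}$ and $S_{\nu}^{\omega}(t_{\ep}u_{\ep})=\f{p-2}{2p}J(\ep)$, where $J(\ep):=A(\ep)^{\f{p}{p-2}}\|u_{\ep}\|_{p}^{-\f{2p}{p-2}}$; hence $d_{\nu}(\omega)\le\f{p-2}{2p}J(\ep)$. Since $A(0)=P=\|w\|_{p}^{p}$ one checks directly that $\f{p-2}{2p}J(0)=\widetilde d_{\nu}(\omega)$, while differentiating $\log J$ at $\ep=0$ and using $A'(0)=0$ yields
\[
\f{d}{d\ep}\log J(\ep)\Big|_{\ep=0}=-\f{2}{p-2}\,\f{p}{P}\int_{\Rd}w^{p-1}\,\G_{\omega,\nu}\,\ud x<0.
\]
Thus $J(\ep)<J(0)$ for every sufficiently small $\ep>0$, whence $d_{\nu}(\omega)\le\f{p-2}{2p}J(\ep)<\f{p-2}{2p}J(0)=\widetilde d_{\nu}(\omega)$. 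I expect the main obstacle to be the rigorous differentiation of the $L^{p}$-norm, which I would settle by dominating $|w+\ep\G_{\omega,\nu}|^{p-1}\G_{\omega,\nu}$ for $|\ep|\le1$ by $C\,(w^{p-1}\G_{\omega,\nu}+\G_{\omega,\nu}^{p})\in L^{1}(\Rd)$, together with the bookkeeping ensuring $A(\ep)>0$ and $u_{\ep}\neq0$ for small $\ep$, both immediate from $A(0)=P>0$ and continuity.
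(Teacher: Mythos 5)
Your proof is correct, but it takes a genuinely different route from the paper's. The paper argues by contradiction through the bound-state characterization: if $d_{\nu}(\omega)=\widetilde d_{\nu}(\omega)$, then the positive $H^{1}$-minimizer $g$ from Proposition \ref{ex-actmin-attr} would attain $d_{\nu}(\omega)$, hence (by Appendix \ref{app-gbstates}) satisfy the boundary condition \eqref{bcond}; since $g\in H^{1}(\Rd)$ forces the charge $q=0$, this yields $g(0)=0$, contradicting positivity of $g$. You instead produce an explicit competitor: perturbing the same minimizer $w$ in the singular direction, $u_{\ep}=w+\ep\,\G_{\omega,\nu}$, the choice $\la=\omega$ in \eqref{eq:FormValue} makes the quadratic part $A(\ep)=\|w\|_{p}^{p}+(\alpha+\theta_{\omega,\nu})\ep^{2}$ flat to first order, while the $L^{p}$-term increases at rate $p\int_{\Rd}w^{p-1}\G_{\omega,\nu}\,\ud x>0$, so the Nehari-projected action $\frac{p-2}{2p}J(\ep)$ strictly decreases for small $\ep>0$. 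Your computations ($A'(0)=0$, $J(0)$ recovering $\widetilde d_{\nu}(\omega)$, the sign of $\frac{d}{d\ep}\log J$ at $0$, and the domination argument for differentiating the $L^{p}$-norm) all check out. What your approach buys: it stays entirely at the quadratic-form level, avoids the operator-domain machinery of Appendix \ref{app-gbstates} (in particular the pointwise evaluation $g(0)$, which only makes sense thanks to the regularity encoded in $\mathcal{D}(H_{\nu,\alpha})$), and gives a quantitative first-order gap rather than a soft contradiction; the paper's proof buys brevity at the price of invoking the Euler--Lagrange analysis. Two small repairs: (i) the fact $\G_{\omega,\nu}\in L^{p}(\Rd)$ should be cited from Lemma \ref{Lrnorm-Green}, which holds for every $s>1$ and every $\la>\nu^{2}$, not from Lemma \ref{Glap}, which is stated only for $2<p<4$ and $\la\geq\max\{1,4\nu^{2}\}$ and hence does not cover $p\geq 4$ or small $\omega$; (ii) the strict positivity of $\G_{\omega,\nu}$, needed to conclude $\int_{\Rd}w^{p-1}\G_{\omega,\nu}\,\ud x>0$, deserves a word — it follows at once from the integral representation \eqref{int-Gla}.
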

\begin{proof}
Let $\nu<0$, $\omega>\nu^{2}$ and  $g$ be a minimizer of $\widetilde{S}_{\nu}^{\omega}$ at frequency $\omega$. Note that $g$ cannot be a ground state of $S_{\nu}^{\omega}$ at mass $\mu$. Indeed, if this is the case, then $g$ has to satisfy \eqref{bcond}, i.e. $\phi_{\la}(0)=(\alpha+\theta_{\la,\nu})q$. Since $g\in H^{1}(\Rd)$, it follows that $q=0$ and, as a consequence, $g=\phi_{\la}$ and $g(0)=0$. Notice that this contradicts Proposition \ref{ex-actmin-attr}, in particular the fact that $g$ is positive. Since $g$ is not a minimizer of $S_{\nu}^{\omega}$ at mass $\mu$, it follows that there exists $v\in N_{\nu}^{\omega}$ such that $S_{\nu}^{\omega}(v)<S_{\nu}^{\omega}(g)=\widetilde{d}_{\nu}(\omega)$, entailing the thesis.
\end{proof}

\begin{proposition}
\label{ex-part-actmin}
Let $\alpha\in\R$, $\nu<0$ and $\omega>\omega_{\nu}$. Then there exists $u=\phi_{\omega}+q\G_{\omega}\in N_{\nu}^{\omega}$, with $q\neq 0$, such that  $S_{\nu}^{\omega}(u)=d_{\nu}(\omega)$.
\end{proposition}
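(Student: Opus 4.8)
The plan is to obtain a minimizer by the direct method, organised around the decomposition at $\lambda=\omega$, and to exclude $q=0$ via the strict inequality of Lemma~\ref{dnu<dtildenu}. First I would write $u=\phi_\omega+q\G_\omega$ with $\G_\omega:=\G_{\omega,\nu}$, for which \eqref{eq:FormValue} collapses to
\[
Q_{\nu}(u)+\omega\|u\|_2^2=\|\nabla\phi_\omega\|_2^2+\nu\||x|^{-1/2}\phi_\omega\|_2^2+\omega\|\phi_\omega\|_2^2+(\alpha+\theta_{\omega,\nu})|q|^2 .
\]
Because $\omega>\omega_\nu>\nu^2$ and $\theta_{\cdot,\nu}$ is strictly increasing with $\theta_{\omega_\nu,\nu}=-\alpha$ (Lemma~\ref{lem:proptheta} and Proposition~\ref{prop:Classification}(iii)), the coefficient $\alpha+\theta_{\omega,\nu}$ is strictly positive. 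I would then fix a minimizing sequence $u_n=\phi_n+q_n\G_\omega\in N_\nu^\omega$; by \eqref{eq:Triangolo} both $\|u_n\|_p^p$ and $Q_\nu(u_n)+\omega\|u_n\|_2^2$ tend to $\f{2p}{p-2}d_\nu(\omega)$.

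Next I would symmetrise: replacing $u_n$ by the symmetric decreasing rearrangement of $|u_n|$ does not increase the gradient term (P\'olya--Szeg\H{o} \eqref{PS}), does not increase the negative Coulomb term (Hardy--Littlewood \eqref{HL-ineq} with the radial weight $|x|^{-1}$), preserves the $L^2$ and $L^p$ norms \eqref{equimeas}, and---the delicate point---preserves the logarithmic charge, so the rearranged functions stay in $\D_\nu$ with the same $|q_n|$ and with $Q_\nu+\omega\|\cdot\|_2^2$ not increased. We may thus assume $u_n$ radial and decreasing, still minimizing for the relaxed problem $I_\nu^\omega\le0$ of Remark~\ref{equiv-form}. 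Boundedness follows in four steps: the nonnegative last term shows $\|\nabla\phi_n\|_2^2+\nu\||x|^{-1/2}\phi_n\|_2^2+\omega\|\phi_n\|_2^2$ is bounded; the spectral bound $\|\nabla\phi\|_2^2+\nu\||x|^{-1/2}\phi\|_2^2\ge-\nu^2\|\phi\|_2^2$ (Proposition~\ref{prop:Classification}(iii)) bounds $\|\phi_n\|_2$; Lemma~\ref{lem:Quasi-Hardy} with $\ep$ small enough that $1+\nu c_\ep>0$ bounds $\|\nabla\phi_n\|_2$; and $\alpha+\theta_{\omega,\nu}>0$ bounds $|q_n|$. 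Up to subsequences $\phi_n\rightharpoonup\phi$ in $H^1$ and $q_n\to q$, and I set $u:=\phi+q\G_\omega\in\D_\nu$.

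Then I would pass to the limit. Radial compactness gives $\phi_n\to\phi$ strongly in $L^p$, and since $\G_\omega\in L^p$ this yields $u_n\to u$ in $L^p$, whence $\|u\|_p^p=\f{2p}{p-2}d_\nu(\omega)$; a preliminary lower bound $d_\nu(\omega)>0$ (from the Gagliardo--Nirenberg control on $N_\nu^\omega$) guarantees $u\neq0$. Weak lower semicontinuity of the $H^1$ terms and of $|q|^2$, together with the convergence $\int|\phi_n|^2/|x|\to\int|\phi|^2/|x|$ (radial compactness away from the origin and the uniform smallness near it from Lemma~\ref{lem:Quasi-Hardy}), give $Q_\nu(u)+\omega\|u\|_2^2\le\f{2p}{p-2}d_\nu(\omega)=\|u\|_p^p$, i.e. $I_\nu^\omega(u)\le0$. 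Were this strict, Lemma~\ref{lem:dnu<normp} would force $\|u\|_p^p>\f{2p}{p-2}d_\nu(\omega)$, a contradiction; hence $I_\nu^\omega(u)=0$, so $u\in N_\nu^\omega$ and $S_\nu^\omega(u)=\f{p-2}{2p}\|u\|_p^p=d_\nu(\omega)$. Finally, if $q=0$ then $u\in H^1$ would lie in $\widetilde N_\nu^\omega$, giving $\widetilde d_\nu(\omega)\le\widetilde S_\nu^\omega(u)=S_\nu^\omega(u)=d_\nu(\omega)$ and contradicting Lemma~\ref{dnu<dtildenu}; therefore $q\neq0$.

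The hard part will be the interplay between rearrangement and the form domain, together with the compactness near the origin: one must check that the symmetric decreasing rearrangement keeps the function in $\D_\nu$ with a controlled charge, so that the term $(\alpha+\theta_{\omega,\nu})|q|^2$ is not spoiled, and---because $\nu<0$ demands the ``wrong'' direction of semicontinuity for the Coulomb term---that $\int|\phi_n|^2/|x|$ genuinely converges rather than merely being lower semicontinuous. This is exactly where the explicit structure of $\G_{\omega,\nu}$ and the Hardy-type estimate of Lemma~\ref{lem:Quasi-Hardy} are essential.
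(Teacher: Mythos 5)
Your route is genuinely different from the paper's: you symmetrize the minimizing sequence to gain radial compactness and then close with Lemma \ref{lem:dnu-bounds}, whereas the paper never symmetrizes in this proof --- it establishes $q\neq 0$ for the weak limit by renormalizing $\phi_{n,\omega}$ onto $\widetilde N_{\nu}^{\omega}$ and contradicting Lemma \ref{dnu<dtildenu}, and then gets strong $L^{p}$ convergence from a Brezis--Lieb splitting combined with the dichotomy of Lemma \ref{lem:dnu<normp}. Several of your steps are sound (the observation $\alpha+\theta_{\omega,\nu}>0$, the four-step coercivity chain, the convergence of the Coulomb term by splitting near the origin, an annulus, and the far field, and the final exclusion of $q=0$). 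But two steps have genuine gaps.

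The first, and the more serious one, is the symmetrization itself. You replace $u_{n}$ by the symmetric decreasing rearrangement of $|u_{n}|$ and assert that this keeps the function in $\D_{\nu}$, preserves the charge $|q_{n}|$, and does not increase $Q_{\nu}+\omega\|\cdot\|_{2}^{2}$. None of this is available: when $q_{n}\neq 0$ one has $u_{n}\notin H^{1}(\Rd)$, so the P\'olya--Szeg\H{o} inequality \eqref{PS} simply does not apply to $u_{n}$, and the form $Q_{\nu}$ on $\D_{\nu}\setminus H^{1}(\Rd)$ is not a Dirichlet integral but is defined only through the decomposition and \eqref{eq:FormValue}; there is no rearrangement inequality for it, and even the membership $|u_{n}|^{*}\in \D_{\nu}$ with charge $|q_{n}|$ would require a proof. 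You flag this as ``the delicate point'' but give no argument. The repair is the one the paper uses later (Lemma \ref{u-sym}): rearrange only the regular part, i.e.\ set $\widetilde u_{n}:=|\phi_{n}|^{*}+|q_{n}|\G_{\omega,\nu}$; then \eqref{equimeas}, \eqref{PS}, \eqref{HL-ineq} applied to $\phi_{n}$ and \eqref{ineqf+g} (using that $\G_{\omega,\nu}$ is positive, radial and decreasing) give $\|\widetilde u_{n}\|_{p}^{p}\geq \|u_{n}\|_{p}^{p}$ and $Q_{\nu}(\widetilde u_{n})+\omega\|\widetilde u_{n}\|_{2}^{2}\leq Q_{\nu}(u_{n})+\omega\|u_{n}\|_{2}^{2}$, hence $I_{\nu}^{\omega}(\widetilde u_{n})\leq 0$. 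Note that $\widetilde u_{n}$ then lies only in the relaxed set of Remark \ref{equiv-form}, and $\|\widetilde u_{n}\|_{p}^{p}$ need no longer converge to $\f{2p}{p-2}d_{\nu}(\omega)$ (it is only bounded below by it); so your claim ``$\|u\|_{p}^{p}=\f{2p}{p-2}d_{\nu}(\omega)$'' must be weakened to the two-sided inequality \eqref{dnu-bounds}, which is fortunately all that Lemma \ref{lem:dnu-bounds} requires.

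The second gap is the non-triviality of the limit. Your argument needs $d_{\nu}(\omega)>0$, which you attribute to ``Gagliardo--Nirenberg control on $N_{\nu}^{\omega}$''. The only Gagliardo--Nirenberg type inequality in the paper valid on $\D_{\nu}$ is Lemma \ref{lem:GNnu<0}, and it holds only for $2<p<4$, while the proposition (and Theorem \ref{ex-min-act}) covers every $p>2$; for $p\geq 4$ you have no tool producing this lower bound. A positive lower bound on $d_{\nu}(\omega)$ can indeed be proved --- coercivity of $Q_{\nu}+\omega\|\cdot\|_{2}^{2}$ in $\|\phi\|_{H^{1}(\Rd)}^{2}+|q|^{2}$ via Lemma \ref{lem:Quasi-Hardy} and Proposition \ref{prop:Classification}(iii), the embedding $H^{1}(\Rd)\hookrightarrow L^{p}(\Rd)$, $\G_{\omega,\nu}\in L^{p}(\Rd)$ from Lemma \ref{Lrnorm-Green}, and then the standard Nehari scaling argument --- but this is a missing piece, not a citation. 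The paper sidesteps it entirely: its Step 2 shows directly that the weak limit has $q\neq 0$, hence $u\neq 0$, by comparison with $\widetilde d_{\nu}(\omega)$, which is why it never needs a quantitative lower bound on $d_{\nu}(\omega)$.
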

\begin{proof}
Let $\nu<0$ and $(u_{n})_{n}$ be a minimizing sequence for $S_{\nu}^{\omega}$ in $N_{\nu}^{\omega}$, i.e. $I_{\nu}^{\omega}(u_{n})=0$ for every $n$ and $S_{\nu}^{\omega}(u_{n})\to d_{\nu}(\omega)$ as $n\to+\infty$. Moreover, consider the decomposition corresponding to $\la=\omega$, that is $u_{n}=\phi_{n,\omega}+q_{n}\G_{\omega,\nu}$. We divide the proof into three steps.

\textit{Step 1: weak convergence.}
 By \eqref{S-nu-om-2}, it follows that 
\begin{equation}
\label{conv-d-om}
\f{p-2}{2p}\|u_{n}\|_{p}^{p}\to d_{\nu}(\omega)\,,\quad\f{p-2}{2p}\left(Q_{\nu}(u_{n})+\omega\|u_{n}\|_{2}^{2})\right)\to d_{\nu}(\omega),
\end{equation}
as $n\to+\infty$, hence $(u_{n})$ is bounded in $L^{p}(\Rd)$. Moreover, by Lemma \ref{dnu<dtildenu} and Lemma \ref{lem:Quasi-Hardy} we have that for sufficiently large $n$
\begin{equation*}
\f{p-2}{2p}\left\{\left(1-|\nu|c_{\ep}\right)\|\na\phi_{n,\omega}\|_{2}^{2}+\left(\omega+\f{1}{\ep}\right)\|\phi_{n,\omega}\|_{2}^{2}+|q_{n}|^{2}(\alpha+\theta_{\omega,\nu})\right\}\leq \widetilde{d}_{\nu}(\omega),
\end{equation*} 
so that, choosing $0<\ep\leq e^{-1}$ satisfying $c_{\ep}<|\nu|^{-1}$, it follows that $(\phi_{n,\omega})_{n}$ is bounded in $H^{1}(\Rd)$ and $(q_{n})_{n}$ is bounded in $\C$. Therefore, there exists $u\in \D_{\nu}$, $\phi_{\omega}\in H^{1}(\Rd)$ and $q\in \C$ such that, up to subsequences, $u_{n}\deb u$ weakly in $L^{p}(\Rd)$, $\phi_{n,\omega}\deb \phi_{\omega}$ weakly in $H^{1}(\Rd)$, $q_{n}\to q$ in $\C$ and $u=\phi_{\omega}+q\G_{\omega}$

\textit{Step 2: $q\neq 0$.} If we suppose by contradiction that $q=0$, then by \eqref{conv-d-om} there results
\begin{equation*}
\f{p-2}{2p}\|\phi_{n,\omega}\|_{p}^{p}\to d_{\nu}(\omega)\,,
\end{equation*}
as $n\to+\infty$. Let us denote with
\begin{equation*}
\beta_{n}:=\left(\f{\|\na\phi_{n,\omega}\|+\omega\|\phi_{n,\omega}\|_{2}^{2}+\nu\left\|\phi_{n,\omega}|x|^{-\f{1}{2}}\right\|_{2}^{2}}{\|\phi_{n,\omega}\|_{p}^{p}}\right)^{\f{1}{p}}.
\end{equation*}
We observe that $\beta_{n}\to 1$ and $\widetilde{I}_{\nu}^{\omega}(\beta_{n}\phi_{n,\omega})=0$, thus by Lemma \ref{dnu<dtildenu} we have
\begin{equation*}
\widetilde{d}_{\nu}(\omega)\leq \f{p-2}{2p}\|\beta_{n}\phi_{n,\omega}\|_{p}^{p}\to d_{\nu}(\omega)<\widetilde{d}_{\nu}(\omega),
\end{equation*}
getting a contradiction, hence $q\neq 0$.

\textit{Step 3: strong convergence and conclusion.} There results that 
\begin{equation}
\label{BL2}
\|u_{n}-u\|_{p}^{p}=\|u_{n}\|_{p}^{p}-\|u\|_{p}^{p}+o(1)
\end{equation}
and 
\begin{equation}
\label{I-un-u}
I_{\nu}^{\omega}(u_{n}-u)=I_{\nu}^{\omega}(u_{n})-I_{\nu}^{\omega}(u)+o(1),
\end{equation}
as $n\to +\infty$. Identity \eqref{BL2} is a consequence of Brezis-Lieb lemma \cite{Brezis-Lieb-83}, since $(u_{n})_{n}$ is bounded in $L^{p}(\Rd)$ and $u_{n}\to u$ a.e. in $\Rd$. \eqref{I-un-u} follows instead by the fact that $\phi_{n,\omega}\deb \phi_{\omega}$ weakly in $H^{1}(\Rd)$, $q_{n}\to q$ in $\C$ and 
\begin{equation}
\label{phin-phi-Coul}
\int_{\R^{2}}|x|^{-1}(\phi_{n,\la}-\phi_{\la})\overline{\phi_{\la}}\,\ud x\to 0,\quad n \to+\infty.
\end{equation}

{
To prove \eqref{phin-phi-Coul}, without loss of generality we can pick $v \in C^\infty_c(\mathbb{R}^2)$, call $r_n=\phi_{n,\lambda}-\phi_\lambda$ and, after fixing $\ep>0$, we prove that $\int_{\R^{2}}|x|^{-1}r_{n}(x)v(x)\,dx\leq \ep$ for $n$ large enough. In particular,
\begin{equation}
	\begin{split}
		\left| \int_{\mathbb{R}^2} |x|^{-1} r_n v(x) \, \ud x \right| &\leq  \left| \int_{B_\eta} |x|^{-1} r_n(x) v(x) \, \ud x \right| + \left|\int_{\mathbb{R}^2\setminus B_\eta} |x|^{-1} r_n(x) v(x) \, \ud x \right| \\
		&= J_1+J_2 \, .
	\end{split}
\end{equation}

By using Cauchy-Schwarz inequality and Lemma \ref{lem:Quasi-Hardy} (with $\ep=e^{-1}$), we get
\begin{equation*}
	\begin{split}
		J_1 &\leq \left(\int_{B_\eta} |x|^{-1} |v(x)|^2 \, \ud x \right)^{\frac{1}{2}} \left(\int_{B_\eta} |x|^{-1} |r_n(x)|^2 \, \ud x \right)^{\frac{1}{2}} \\
		&\leq \sqrt{\left(\f{4}{e}+e\right)c}\Vert r_n \Vert_{H^1(\mathbb{R}^2)}\left(\int_{B_\eta} |x|^{-1} |v(x)|^2 \, \ud x \right)^{\frac{1}{2}}  \, ,
	\end{split}
\end{equation*}
thus, since $(u_{n})$ is bounded in $H^{1}(\Rd)$ and $v\in H^{1}(\Rd)$, for $\eta$ small enough $J_{1}\leq \f{\ep}{2}$. Now, fix $\eta$ such that $J_{1}\leq\f{\ep}{2}$. Then
\begin{equation*}
	J_2 \leq \frac{1}{\eta} \int_{\mathbb{R}^2 \setminus B_\eta} |v(x)| |r_n(x)| \, \ud x \leq \frac{1}{\eta} \Vert v \Vert_\infty \int_{\mathrm{supp}(v)} |r_n| \, \ud x \,,
\end{equation*}
hence for $n$ large enough $J_{2}\leq \f{\ep}{2}$, since $r_n \deb 0$ in $H^1(\mathbb{R}^2)$ and, for the compact embedding, $r_{n}\to 0$ in $L^1_{loc}(\mathbb{R}^2)$. }

Now, using \eqref{BL2} and the fact that $\|u\|_{p}^{p}>0$,
\begin{equation*}
\lim_{n\to+\infty} \f{p-2}{2p}\|u_{n}-u\|_{p}^{p}<\lim_{n\to+\infty}\f{p-2}{2p}\|u_{n}\|_{p}^{p}=d_{\nu}(\omega),
\end{equation*}
hence $I_{\nu}^{\omega}(u_{n}-u)\geq 0$ for sufficiently large $n$ by Lemma \ref{lem:dnu<normp} and  $I_{\nu}^{\omega}(u)\leq 0$ by \eqref{I-un-u}. Thus, from the lower semicontinuity of the norms and Remark \ref{equiv-form} 
\begin{equation*}
d_{\nu}(\omega)\leq\f{p-2}{2p}\|u\|_{p}^{p}\leq\f{p-2}{2p}\liminf_{n\to+\infty}\|u_{n}\|_{p}^{p}=d_{\nu}(\omega),
\end{equation*} 
hence $u_{n}\to u$ in $L^{p}(\Rd)$ and $\f{p-2}{2p}\|u\|_{p}^{p}=d_{\nu}(\omega)$, so that the thesis follows via Lemma \ref{lem:dnu<normp}.
\end{proof}

\begin{lemma}
\label{lem:dnu-bounds}
Let $\alpha\in \R$, $\nu<0$ and $\omega>\omega_{\nu}$. If $u\in \D_\nu$ satisfies
\begin{equation}
\label{dnu-bounds}
\f{p-2}{2p}\left(Q_{\nu}(u)+\omega\|u\|_{2}^{2}\right)\leq d_{\nu}(\omega)\leq \f{p-2}{2p}\|u\|_{p}^{p},
\end{equation}
then $u\in N_{\nu}^{\omega}$ and $S_{\nu}^{\omega}(u)=d_{\nu}(\omega)$.
\end{lemma}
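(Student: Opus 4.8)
The plan is to show that the two-sided bound \eqref{dnu-bounds} forces $I_{\nu}^{\omega}(u)=0$, after which the whole estimate collapses into a chain of equalities. Throughout I would rely on the two equivalent expressions for the action recorded in \eqref{S-nu-om-2}, namely
\[
S_{\nu}^{\omega}(v)=\f{p-2}{2p}\|v\|_{p}^{p}+\f{1}{2}I_{\nu}^{\omega}(v)=\f{p-2}{2p}\left(Q_{\nu}(v)+\omega\|v\|_{2}^{2}\right)+\f{1}{p}I_{\nu}^{\omega}(v).
\]

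First I would record that $u\neq 0$. Proposition \ref{ex-part-actmin} yields a minimizer $u_{*}\in N_{\nu}^{\omega}$ with $S_{\nu}^{\omega}(u_{*})=d_{\nu}(\omega)$; since on $N_{\nu}^{\omega}$ one has $S_{\nu}^{\omega}=\f{p-2}{2p}\|\cdot\|_{p}^{p}$ and $u_{*}\neq 0$, it follows that $d_{\nu}(\omega)>0$. Combined with the right-hand inequality in \eqref{dnu-bounds}, this gives $\f{p-2}{2p}\|u\|_{p}^{p}\geq d_{\nu}(\omega)>0$, hence $u\neq 0$.

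Next, chaining the two inequalities in \eqref{dnu-bounds} yields $\f{p-2}{2p}\left(Q_{\nu}(u)+\omega\|u\|_{2}^{2}\right)\leq \f{p-2}{2p}\|u\|_{p}^{p}$, that is $I_{\nu}^{\omega}(u)\leq 0$. The crucial step is to exclude the strict case. If $I_{\nu}^{\omega}(u)<0$, then Lemma \ref{lem:dnu<normp} applies and gives $Q_{\nu}(u)+\omega\|u\|_{2}^{2}>\f{2p}{p-2}d_{\nu}(\omega)$, which is precisely the negation of the left-hand inequality in \eqref{dnu-bounds}. This contradiction forces $I_{\nu}^{\omega}(u)=0$, and together with $u\neq 0$ we conclude $u\in N_{\nu}^{\omega}$.

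Finally, since $I_{\nu}^{\omega}(u)=0$ we have $Q_{\nu}(u)+\omega\|u\|_{2}^{2}=\|u\|_{p}^{p}$, so the leftmost and rightmost members of \eqref{dnu-bounds} coincide; the squeeze then forces all three quantities to agree, giving $d_{\nu}(\omega)=\f{p-2}{2p}\|u\|_{p}^{p}=S_{\nu}^{\omega}(u)$ by \eqref{S-nu-om-2}. I do not anticipate a genuine obstacle: once Lemma \ref{lem:dnu<normp} is invoked the argument is pure bookkeeping with the inequalities \eqref{dnu-bounds}. The only point requiring slight care is ruling out $u=0$, for which the positivity $d_{\nu}(\omega)>0$ (a consequence of Proposition \ref{ex-part-actmin}) is essential.
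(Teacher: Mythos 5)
Your proposal is correct and takes essentially the same route as the paper's proof: chaining the two inequalities in \eqref{dnu-bounds} gives $I_{\nu}^{\omega}(u)\leq 0$, Lemma \ref{lem:dnu<normp} rules out the strict inequality $I_{\nu}^{\omega}(u)<0$, and the reformulation \eqref{S-nu-om-2} then forces $S_{\nu}^{\omega}(u)=d_{\nu}(\omega)$. Your explicit verification that $u\neq 0$, via the positivity $d_{\nu}(\omega)>0$ obtained from Proposition \ref{ex-part-actmin}, is a detail the paper leaves implicit, and it is a welcome addition since membership in $N_{\nu}^{\omega}$ requires it.
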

\begin{proof}
From \eqref{dnu-bounds}, we have $I_{\nu}^{\omega}(u)\leq 0$ and using also \eqref{S-nu-om-2}, we get  $S_{\nu}^{\omega}(u)\leq d_{\nu}(\omega)$. Moreover, again from \eqref{dnu-bounds} and Lemma \ref{lem:dnu<normp} we get $I_{\nu}^{\omega}(u)\geq 0$, thus $u\in N_{\nu}^{\omega}$ and $S_{\nu}^{\omega}(u)=d_{\nu}(\omega)$.
\end{proof}

The next three lemmas characterize the minimizers of the action \eqref{S-nu-om} at fixed frequency $\omega$.

\begin{lemma}
\label{q-phi-neq0}
Let $\alpha\in \R$, $\nu<0$, $\omega>\omega_{\nu}$ and $u$ be a minimizer of the action $S_{\nu}^{\omega}$ at frequency $\omega$. Then $q\neq 0$ and $\phi_{\la}:=u-q\G_{\la,\nu}\neq 0$ for every $\la>\nu^{2}$.
\end{lemma}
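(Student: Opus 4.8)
Throughout, recall first that the charge $q$ in the decomposition $u=\phi_\la+q\G_{\la,\nu}$ is independent of $\la$: if $u=\phi_\la+q_\la\G_{\la,\nu}=\phi_\mu+q_\mu\G_{\mu,\nu}$ with $\la,\mu>\nu^2$, then $\G_{\la,\nu}-\G_{\mu,\nu}\in H^1(\Rd)$ since the two Green's functions share the same logarithmic singularity at the origin, so $(q_\la-q_\mu)\G_{\la,\nu}\in H^1(\Rd)$ and hence $q_\la=q_\mu$ because $\G_{\la,\nu}\notin H^1(\Rd)$. Thus ``$q\neq0$'' is well posed, and I would fix an arbitrary $\la>\nu^2$.

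To prove $q\neq0$ I would argue by contradiction using Lemma \ref{dnu<dtildenu}. If $q=0$ then $u=\phi_\la\in H^1(\Rd)$, so $u\in N_\nu^\omega\cap H^1(\Rd)=\widetilde N_\nu^\omega$ and $S_\nu^\omega(u)=\widetilde S_\nu^\omega(u)$; this would give
\[
d_\nu(\omega)=S_\nu^\omega(u)=\widetilde S_\nu^\omega(u)\geq\widetilde d_\nu(\omega),
\]
contradicting the strict inequality $d_\nu(\omega)<\widetilde d_\nu(\omega)$ of Lemma \ref{dnu<dtildenu}. Hence $q\neq0$.

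For $\phi_\la\neq0$ I would exploit that $u$, being a minimizer of the action, is a bound state (see Appendix \ref{app-gbstates}): $u\in\D(H_{\nu,\alpha})$ and it solves the Euler--Lagrange equation \eqref{EL-eq}, $H_{\nu,\alpha}u+\omega u=|u|^{p-2}u$. Suppose $\phi_{\la_0}=0$ for some $\la_0>\nu^2$; then $u=q\G_{\la_0,\nu}$ with $q\neq0$. On $\Rd\setminus\{0\}$ the operator $H_{\nu,\alpha}$ acts as $-\Delta+\nu|x|^{-1}$ and $\G_{\la_0,\nu}$ solves $(-\Delta+\nu|x|^{-1}+\la_0)\G_{\la_0,\nu}=0$, so restricting \eqref{EL-eq} to $\Rd\setminus\{0\}$ gives
\[
(\omega-\la_0)\,u=|u|^{p-2}u\qquad\text{on }\Rd\setminus\{0\}.
\]
Since $\G_{\la_0,\nu}>0$ by the integral representation \eqref{int-Gla}, dividing by $u$ yields $|u|^{p-2}=\omega-\la_0$, i.e. $|u|$ constant on $\Rd\setminus\{0\}$; this is impossible because $u=q\G_{\la_0,\nu}$ diverges logarithmically at the origin and decays at infinity, hence is not constant (and $p>2$). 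This contradiction shows $\phi_{\la_0}\neq0$, and since $\la_0$ was arbitrary, $\phi_\la\neq0$ for every $\la>\nu^2$.

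The delicate point is the second step: one must convert the abstract minimality into a pointwise identity away from the origin, which is precisely what the bound-state characterization \eqref{bcond}--\eqref{EL-eq} provides; once available, the non-constancy of the Green's function closes the argument immediately. A direct variational perturbation argument would work too but would essentially amount to re-deriving \eqref{EL-eq}, so invoking it is the most economical route.
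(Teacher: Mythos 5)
Your proof is correct, and its overall skeleton matches the paper's. For $q\neq 0$ it is identical: if $q=0$ then $u\in N_{\nu}^{\omega}\cap H^{1}(\Rd)=\widetilde{N}_{\nu}^{\omega}$ and $S_{\nu}^{\omega}(u)=\widetilde{S}_{\nu}^{\omega}(u)$, forcing $d_{\nu}(\omega)\geq \widetilde{d}_{\nu}(\omega)$ and contradicting Lemma \ref{dnu<dtildenu}. For $\phi_{\la}\neq 0$ you reach the same contradiction by a mildly different route. The paper first invokes the boundary condition \eqref{bcond}: $\phi_{\la}=0$ and $q\neq 0$ force $\alpha+\theta_{\la,\nu}=0$, hence $\la=\omega_{\nu}$ by Proposition \ref{prop:Classification}, and only then applies \eqref{EL-eq}, using that $\G_{\omega_{\nu},\nu}$ is an eigenfunction of $H_{\nu,\alpha}$ with eigenvalue $-\omega_{\nu}$, to get $\omega-\omega_{\nu}=|q|^{p-2}\left|\G_{\omega_{\nu},\nu}\right|^{p-2}$ on $\Rd\setminus\{0\}$. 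You bypass the boundary condition entirely and restrict \eqref{EL-eq} distributionally to $\Rd\setminus\{0\}$, where $\G_{\la_{0},\nu}$ solves the homogeneous equation, obtaining $(\omega-\la_{0})u=|u|^{p-2}u$ for an \emph{arbitrary} $\la_{0}>\nu^{2}$; both proofs then conclude because the left-hand side is constant while $|q|\,\G$ is not (logarithmic blow-up at the origin, decay at infinity, positivity from \eqref{int-Gla}). What your variant buys is that no identification of $\la_{0}$ with $\omega_{\nu}$ is needed; what it costs is that the step ``$H_{\nu,\alpha}$ acts as $-\Delta+\nu|x|^{-1}$ away from the origin'' deserves a line of justification, namely $H_{\nu,\alpha}\subset H_{\nu}^{*}$ with the adjoint acting as the differential expression in the sense of distributions on $\Rd\setminus\{0\}$; the paper's detour through \eqref{bcond} stays instead within the operator-domain description \eqref{eq:Domain2DExtension-OP}. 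Finally, your preliminary observation that $q$ does not depend on $\la$ (because $\G_{\la,\nu}-\G_{\mu,\nu}\in H^{1}(\Rd)$) is a point of rigor that the paper leaves implicit, and it is needed for the statement ``$\phi_{\la}\neq 0$ for every $\la>\nu^{2}$'' to be read off from a single charge $q$.
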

\begin{proof}
Let $\la>\nu^{2}$ and consider the decomposition $u=\phi_{\la}+q\G_{\la}$. Assume by contradiction that $\phi_{\la}=0$. Since $u\neq 0$, we have $q\neq 0$, and, since $u$ satisfies \eqref{bcond}, then $\alpha+\theta_{\la,\nu}=0$, which, for $\lambda > \nu^2$ is satisfied if and only if $\la=\omega_{\nu}$. Moreover, since $u$ satisfies also \eqref{EL-eq}, it follows that
\begin{equation*}
\omega-\omega_{\nu}=|q|^{p-2}\left|\G_{\omega_{\nu},\nu}\right|^{p-2}\quad \forall\,x\in \Rd\setminus\{0\},
\end{equation*}
which is a contradiction, hence $\phi_{\la}\neq 0$. If instead we suppose by contradiction that $q=0$, i.e $u=\phi_{\la}\in H^{1}(\Rd)$, then $d_{\nu}(\omega)=S_{\nu}^{\omega}(u)=\widetilde{d}_{\nu}(\omega)$, contradicting Lemma \ref{dnu<dtildenu}.
\end{proof}

\begin{lemma}
\label{u-pos}
Let $\alpha\in \R$, $\nu<0$, $\omega>\omega_{\nu}$ and $u$ be a minimizer of the action $S_{\nu}^{\omega}$ at frequency $\omega$. If $q$ is a positive real number, then $u$ is positive.
\end{lemma}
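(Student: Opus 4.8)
The plan is to prove the statement by passing to the modulus $|u|$, showing that it is again a minimizer carrying the same positive charge $q$, and then upgrading nonnegativity to strict positivity through the Euler--Lagrange equation. Since $q>0$, I would first observe that the singular part dominates near the origin: as $|x|\to 0$ one has $q\,\G_{\la,\nu}(|x|)\sim-\tfrac{q}{2\pi}\ln|x|\to+\infty$, while the regular part $\phi_{\la}$ stays bounded near $0$ (it lies in the Friedrichs domain, as $u$ is a bound state by \eqref{bcond}). Hence $u>0$ in a punctured neighbourhood of the origin, so that $|u|=u$ there; consequently $|u|\in\D_{\nu}$ decomposes as $|u|=\psi_{\la}+q\,\G_{\la,\nu}$ with the \emph{same} charge $q>0$, and $\psi_{\la}:=|u|-q\,\G_{\la,\nu}\in H^{1}(\Rd)$ because $|u|-u$ vanishes near $0$ and is supported where $u$ is locally $H^1$.

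The next, and central, step is to compare the two forms. Writing $Q_{\nu}(v)+\omega\|v\|_{2}^{2}$ through the renormalised expression $\lim_{\delta\to0}\big[\int_{|x|>\delta}(|\na v|^{2}+\tfrac{\nu}{|x|}|v|^{2}+\omega|v|^{2})\,\ud x-c(\delta)\,|q_{v}|^{2}\big]$, where the divergent counterterm $c(\delta)$ depends only on the modulus of the charge $q_{v}$ of $v$, the pointwise diamagnetic bound $|\na|u||\le|\na u|$, the identity $|\,|u|\,|=|u|$, and the fact that $|u|$ carries the same charge $q$ together yield $Q_{\nu}(|u|)\le Q_{\nu}(u)$. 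Since $\||u|\|_{r}=\|u\|_{r}$ for $r=2,p$, this gives $I_{\nu}^{\omega}(|u|)\le I_{\nu}^{\omega}(u)=0$, and by \eqref{S-nu-om-2} also $S_{\nu}^{\omega}(|u|)\le S_{\nu}^{\omega}(u)=d_{\nu}(\omega)$. On the other hand $I_{\nu}^{\omega}(|u|)\le 0$ and $\||u|\|_{p}=\|u\|_{p}$ give, via Remark \ref{equiv-form}, $d_{\nu}(\omega)\le\tfrac{p-2}{2p}\||u|\|_{p}^{p}=\tfrac{p-2}{2p}\|u\|_{p}^{p}=d_{\nu}(\omega)$; were $I_{\nu}^{\omega}(|u|)<0$, Lemma \ref{lem:dnu<normp} would make the first inequality strict, a contradiction. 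Hence $I_{\nu}^{\omega}(|u|)=0$, so $|u|$ is itself a minimizer and $Q_{\nu}(|u|)=Q_{\nu}(u)$.

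To conclude I would combine a maximum principle with the equality case. As a minimizer, $|u|$ satisfies \eqref{bcond}--\eqref{EL-eq}, i.e. $(-\Delta+\tfrac{\nu}{|x|}+\omega)|u|=|u|^{p-1}\ge0$ on $\Rd\setminus\{0\}$; since $|u|\ge0$, $|u|\not\equiv0$ and $|u|\to+\infty$ at the origin, the strong maximum principle (Harnack on annuli, where the coefficients are bounded) forces $|u|>0$ on all of $\Rd\setminus\{0\}$. The equality $Q_{\nu}(|u|)=Q_{\nu}(u)$ turns the diamagnetic bound into $|\na|u||=|\na u|$ a.e., which forces $u=e^{\ii\vartheta}|u|$ with $\vartheta$ locally constant on the connected set $\{u\ne0\}=\Rd\setminus\{0\}$, hence globally constant. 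Matching charges, that of $e^{\ii\vartheta}|u|$ is $e^{\ii\vartheta}q$ while that of $u$ is $q>0$; by uniqueness of the decomposition in Proposition \ref{prop:Classification} we get $e^{\ii\vartheta}=1$, so $u=|u|>0$.

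I expect the main obstacle to be the form comparison $Q_{\nu}(|u|)\le Q_{\nu}(u)$ and its equality case: because $Q_{\nu}$ is not the plain Dirichlet form but is defined through the decomposition against the singular Green's function, one cannot apply the usual diamagnetic inequality directly to $\phi_{\la}$ (note $|u|-q\G_{\la,\nu}\ne|u-q\G_{\la,\nu}|$ in general). The crucial point is that taking the modulus neither alters the charge $q$ (so the singular term $(\alpha+\theta_{\la,\nu})|q|^{2}$ is untouched) nor increases the regular part, which is cleanest in the renormalised form written away from the origin. A variant avoiding the equality-case analysis, once $u$ is known to be real, is the ``truncation'' argument: $B_{\omega}(u)=B_{\omega}(u^{+})+B_{\omega}(u^{-})$ with $u^{+}$ carrying charge $q$ and $u^{-}\in H^1$ of charge $0$ yields $I_{\nu}^{\omega}(u^{+})+I_{\nu}^{\omega}(u^{-})=0$, and Lemma \ref{lem:dnu<normp} applied to whichever part has nonpositive $I$ contradicts $\tfrac{p-2}{2p}\|u^{\pm}\|_{p}^{p}<d_{\nu}(\omega)$ unless $u^{-}\equiv0$.
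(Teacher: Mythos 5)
Your strategy (pass to $|u|$, show it is again a minimizer with the same charge, then recover $u$ from the equality case) is appealing, but its two load-bearing steps are not justified, and the first is wrong as stated. The claim that ``$u>0$ in a punctured neighbourhood of the origin, so that $|u|=u$ there'' presupposes that $u$ is real near the origin, which is essentially what the lemma is trying to prove: a priori $u=\phi_{\la}+q\G_{\la,\nu}$ is complex-valued, and near the origin one only obtains $u\neq 0$ (indeed $\mathrm{Re}\,u\geq q\G_{\la,\nu}-|\phi_{\la}|\to+\infty$), while $\mathrm{Im}\,u=\mathrm{Im}\,\phi_{\la}$ has no reason to vanish. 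Consequently your justification that $\psi_{\la}:=|u|-q\G_{\la,\nu}\in H^{1}(\Rd)$ (``because $|u|-u$ vanishes near $0$'') collapses: $|u|-u$ does \emph{not} vanish near $0$ for complex $u$. The membership itself may be true, but it requires a quantitative cancellation estimate between $\na|u|$ and $q\na\G_{\la,\nu}$ near the origin (note $|\na\G_{\la,\nu}|\sim (2\pi|x|)^{-1}\notin L^{2}$ in dimension two), not a soft support argument. Second, your ``central step'' rests on a renormalised representation of $Q_{\nu}(v)+\omega\|v\|_{2}^{2}$ as a limit of truncated integrals minus a counterterm $c(\delta)|q_{v}|^{2}$. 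This representation is nowhere established in the paper and is not proved in your proposal; it is a nontrivial lemma in its own right (one must verify that the cross terms and the Coulomb term converge and that the divergence is exactly $c(\delta)|q_{v}|^{2}$ with $c(\delta)$ independent of $v$). Without it, the inequality $Q_{\nu}(|u|)\leq Q_{\nu}(u)$ --- which, as you yourself note, cannot be obtained by applying the diamagnetic inequality to the regular part --- is unsupported. The closing ``truncation variant'' does not rescue this, since it starts from ``once $u$ is known to be real'', which is again the point at issue.

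For comparison, the paper sidesteps both problems by never taking the modulus of the full function: it compares $u=\phi_{\omega}+q\G_{\omega,\nu}$ with $\widehat{u}:=|\phi_{\omega}|+q\G_{\omega,\nu}$, i.e.\ the modulus of the \emph{regular part only}. Then $\widehat{u}\in\D_{\nu}$ with charge $q$ is immediate; the bound $Q_{\nu}(\widehat{u})+\omega\|\widehat{u}\|_{2}^{2}\leq Q_{\nu}(u)+\omega\|u\|_{2}^{2}$ follows directly from \eqref{eq:FormValue} (with $\la=\omega$, where the $\|u\|_{2}^{2}$ terms cancel) and the diamagnetic inequality $\|\na|\phi_{\omega}|\|_{2}\leq\|\na\phi_{\omega}\|_{2}$ applied to the $H^{1}$ function $\phi_{\omega}$; and since $q\G_{\omega,\nu}>0$ pointwise, one has $|u|\leq\widehat{u}$ with strict inequality wherever the phase of $\phi_{\omega}$ is nontrivial, whence $\|u\|_{p}^{p}<\|\widehat{u}\|_{p}^{p}$ if that phase is nonzero on a set of positive measure. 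Lemma \ref{lem:dnu-bounds} then produces the contradiction. If you wish to salvage your route, you would need to prove the renormalised form representation and the $H^{1}$ property of $|u|-q\G_{\la,\nu}$ as standalone lemmas --- both of which amount to more work than the paper's argument.
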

\begin{proof}
Let $u=\phi_{\omega}+q\G_{\omega,\nu}$ be a minimizer of the action $S_{\nu}^{\omega}$ at frequency $\omega$ and $\Omega:=\{x\in\Rd\,:\, \phi_{\omega}(x)\neq 0\}$. By Lemma \ref{q-phi-neq0}, $|\Omega|>0$, so that $\phi_{\omega}(x)=e^{i\theta(x)}|\phi_{\omega}(x)|$ for every $x\in \Rd\setminus\{0\}$, for some $\theta:\Omega\to[0,2\pi)$. If we prove that $\theta(x)=0$ for a.e. $x\in \Omega\setminus \{0\}$, then the proof is complete since in this case $\phi_{\omega}(x)=|\phi_{\omega}(x)|\geq 0$ for every $x\in \Rd$, whence $u(x)>0$ for every $x\in \Rd\setminus\{0\}$. Suppose by contradiction that $\theta\neq 0$ on $\Omega_{1}\subset\left(\Omega\setminus\{0\}\right)$, with $|\Omega_{1}|>0$, and consider the function $\widehat{u}:=|\phi_{\omega}|+q\G_{\omega,\nu}$. On the one hand, we have that $|u(x)|^{2}<|\widehat{u}(x)|^{2}$ for every $x\in \Omega_{1}$, thus $d_{\nu}(\omega)=\f{p-2}{2p}\|u\|_{p}^{p}<\f{p-2}{2p}\|\widehat{u}\|_{p}^{p}$ since $|\Omega_{1}|>0$. Moreover, using \eqref{equimeas} and \eqref{PS}, we have $\f{p-2}{2p}\left(Q_{\nu}(\widehat{u})+\omega\|\widehat{u}\|_{2}^{2}\right)<\f{p-2}{2p}\left(Q_{\nu}(u)+\omega\|u\|_{2}^{2}\right)=d_{\nu}(\omega)$, hence
\begin{equation*}
\f{p-2}{2p}\left(Q_{\nu}(\widehat{u})+\omega\|\widehat{u}\|_{2}^{2}\right)<d_{\nu}(\omega)<\f{p-2}{2p}\|\widehat{u}\|_{p}^{p}.
\end{equation*}
From Lemma \ref{lem:dnu-bounds}, we deduce that $\widehat{u}\in N_{\nu}^{\omega}$ and $S_{\nu}^{\omega}(\widehat{u})=\f{p-2}{2p}\|\widehat{u}\|_{p}^{p}=d_{\nu}(\omega)=\f{p-2}{2p}\|u\|_{p}^{p}$, getting a contradiction.
\end{proof}
\begin{lemma}
\label{u-sym}
Let $\alpha\in \R$, $\nu<0$, $\omega>\omega_{\nu}$ and $u$ be a minimizer of the action $S_{\nu}^{\omega}$ at frequency $\omega$. If $u$ is positive, then $u$ is radially symmetric and decreasing along the radial direction.
\end{lemma}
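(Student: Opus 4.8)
The plan is to symmetrize the regular part of the canonical decomposition of $u$ at $\la=\omega$, namely $u=\phi_{\omega}+q\G_{\omega,\nu}$, keeping the charge $q$ fixed, and to show that the symmetrized function is again a minimizer; the rigidity of the rearrangement inequalities will then force $u$ itself to be radially symmetric and decreasing. First I record the sign of $q$. By Lemma \ref{q-phi-neq0} we have $q\neq 0$, and I claim $q>0$: since $u$ satisfies \eqref{bcond}, the regular part $\phi_{\omega}$ admits a finite value at the origin (cf. \eqref{eq:Domain2DExtension-OP}), whereas $\G_{\omega,\nu}(|x|)\to+\infty$ as $|x|\to0$; hence for $|x|$ small the sign of $u$ coincides with the sign of $q$, and the positivity of $u$ forces $q>0$. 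I also use that $\G_{\omega,\nu}$ is positive, radial and \emph{strictly} decreasing, which is clear from the integral representation \eqref{int-Gla}.

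Let $\phi_{\omega}^{*}$ denote the symmetric decreasing rearrangement of $|\phi_{\omega}|$ and set $\widehat u:=\phi_{\omega}^{*}+q\G_{\omega,\nu}$. By the P\'olya--Szeg\H{o} inequality \eqref{PS}, $\phi_{\omega}^{*}\in H^{1}(\Rd)$, so $\widehat u\in\D_{\nu}$ with regular part $\phi_{\omega}^{*}$ and charge $q$, and \eqref{eq:FormValue} at $\la=\omega$ gives
\begin{equation*}
Q_{\nu}(\widehat u)+\omega\|\widehat u\|_{2}^{2}=\Vert\na\phi_{\omega}^{*}\Vert_{2}^{2}+\nu\Vert|x|^{-1/2}\phi_{\omega}^{*}\Vert_{2}^{2}+\omega\Vert\phi_{\omega}^{*}\Vert_{2}^{2}+(\alpha+\theta_{\omega,\nu})q^{2}.
\end{equation*}
Applying \eqref{PS} to the gradient term, \eqref{equimeas} to the $L^{2}$ term, and the Hardy--Littlewood inequality \eqref{HL-ineq} to the Coulomb term — which increases $\Vert|x|^{-1/2}\phi_{\omega}\Vert_{2}^{2}$ under rearrangement and therefore, since $\nu<0$, \emph{decreases} the quantity $\nu\Vert|x|^{-1/2}\phi_{\omega}\Vert_{2}^{2}$ — I obtain $Q_{\nu}(\widehat u)+\omega\|\widehat u\|_{2}^{2}\leq Q_{\nu}(u)+\omega\|u\|_{2}^{2}$, whence $\f{p-2}{2p}(Q_{\nu}(\widehat u)+\omega\|\widehat u\|_{2}^{2})\leq d_{\nu}(\omega)$ because $u$ is a minimizer. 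For the nonlinear term, using $q\G_{\omega,\nu}\geq0$ and \eqref{ineqf+g} with $f=|\phi_{\omega}|$ and $g=q\G_{\omega,\nu}$,
\begin{equation*}
\|u\|_{p}=\|\phi_{\omega}+q\G_{\omega,\nu}\|_{p}\leq\||\phi_{\omega}|+q\G_{\omega,\nu}\|_{p}\leq\|\phi_{\omega}^{*}+q\G_{\omega,\nu}\|_{p}=\|\widehat u\|_{p},
\end{equation*}
so that $\f{p-2}{2p}\|\widehat u\|_{p}^{p}\geq\f{p-2}{2p}\|u\|_{p}^{p}=d_{\nu}(\omega)$.

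Lemma \ref{lem:dnu-bounds} then yields $\widehat u\in N_{\nu}^{\omega}$ with $S_{\nu}^{\omega}(\widehat u)=d_{\nu}(\omega)=\f{p-2}{2p}\|u\|_{p}^{p}$, forcing $\|\widehat u\|_{p}=\|u\|_{p}$ and hence equality throughout the last display. The equality $\||\phi_{\omega}|+q\G_{\omega,\nu}\|_{p}=\|\phi_{\omega}^{*}+q\G_{\omega,\nu}\|_{p}$ is equality in \eqref{ineqf+g} with the strictly decreasing radial function $g=q\G_{\omega,\nu}$; by the rigidity statement following \eqref{ineqf+g}, $|\phi_{\omega}|=\phi_{\omega}^{*}$, so $|\phi_{\omega}|$ is radially symmetric and decreasing. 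The other equality $\|\phi_{\omega}+q\G_{\omega,\nu}\|_{p}=\||\phi_{\omega}|+q\G_{\omega,\nu}\|_{p}$, together with the pointwise chain $0\leq u=\phi_{\omega}+q\G_{\omega,\nu}\leq|\phi_{\omega}|+q\G_{\omega,\nu}$, forces $\phi_{\omega}=|\phi_{\omega}|$ a.e., i.e. $\phi_{\omega}\geq0$. Combining the two facts gives $\phi_{\omega}=\phi_{\omega}^{*}$, so $\phi_{\omega}$ is radially symmetric and decreasing, and therefore $u=\phi_{\omega}+q\G_{\omega,\nu}$, the sum of two radially symmetric decreasing functions with $q>0$, is radially symmetric and (strictly) decreasing. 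I expect the delicate point to be precisely this final step: the two separate $L^{p}$ equalities must be exploited in the right order — one to symmetrize $|\phi_{\omega}|$ via the rigidity of \eqref{ineqf+g}, the other to recover the sign of $\phi_{\omega}$ from the pointwise comparison — and it is essential that $\G_{\omega,\nu}$ be \emph{strictly} radially decreasing for the rigidity to apply, as well as that the KVB form value \eqref{eq:FormValue} depend on the regular part and on $|q|^{2}$ only, with no cross terms.
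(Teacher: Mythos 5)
Your proof is correct, and while it follows the paper's core symmetrization strategy -- replace the regular part $\phi_{\omega}$ of the decomposition at $\la=\omega$ by its rearrangement while keeping the charge $q$ fixed, and exploit that the form value \eqref{eq:FormValue} sees $\phi_{\omega}$ only through quantities that improve or are conserved under rearrangement -- the logical wrapper is genuinely different. The paper argues by contradiction: if $\phi_{\omega}\neq\phi_{\omega}^{*}$, then \eqref{equimeas}, \eqref{PS} and the equality cases of \eqref{HL-ineq} and \eqref{ineqf+g} yield the \emph{strict} inequality $Q_{\nu}(\widetilde u)+\omega\|\widetilde u\|_{2}^{2}<Q_{\nu}(u)+\omega\|u\|_{2}^{2}$ together with $I_{\nu}^{\omega}(\widetilde u)<0$, contradicting Lemma \ref{lem:dnu<normp}. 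You instead run a direct argument, structurally parallel to the paper's proof of Lemma \ref{u-pos}: only the non-strict inequalities $\f{p-2}{2p}\left(Q_{\nu}(\widehat u)+\omega\|\widehat u\|_{2}^{2}\right)\leq d_{\nu}(\omega)\leq \f{p-2}{2p}\|\widehat u\|_{p}^{p}$ are needed, Lemma \ref{lem:dnu-bounds} then upgrades $\widehat u$ to a minimizer, and the symmetry is extracted a posteriori from the rigidity of the equality case in \eqref{ineqf+g} (where the strict radial monotonicity of $\G_{\omega,\nu}$ enters) plus a pointwise comparison giving $\phi_{\omega}=|\phi_{\omega}|$. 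Your route buys self-containedness with respect to the stated hypotheses: you derive $q>0$ and the sign information $\phi_{\omega}\geq 0$ directly from the positivity of $u$, whereas the paper's strict-inequality step is fully justified only once one knows $\phi_{\omega}=|\phi_{\omega}|$ (equality in \eqref{HL-ineq} with the strictly decreasing kernel $|x|^{-1}$ forces $|\phi_{\omega}|=\phi_{\omega}^{*}$, not $\phi_{\omega}=\phi_{\omega}^{*}$), a fact that in the paper's logical flow is supplied by Lemma \ref{u-pos} before Lemma \ref{u-sym} is invoked. Conversely, the paper's route, granted that sign information, is shorter: one strict inequality and a single appeal to Lemma \ref{lem:dnu<normp} close the argument without any discussion of equality in the $L^{p}$ chain.
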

\begin{proof}
Let $u=\phi_{\omega}+q\G_{\omega,\nu}$ be a minimizer of the action $S_{\nu}^{\omega}$ at frequency $\omega$. Since $\G_{\omega,\nu}$ is radially symmetric and decreasing, it is sufficient to prove that $\phi_{\omega}=\phi_{\omega}^{*}$, with $\phi_{\omega}^{*}$ the radially symmetric decreasing rearrangement of $\phi_{\omega}$. Assume by contradiction that $\phi_{\omega}\neq \phi_{\omega}^{*}$ and define the function $\widetilde{u}:=\phi_{\omega}^{*}+q\G_{\omega,\nu}$. By \eqref{equimeas}, \eqref{PS} and the equality cases in \eqref{HL-ineq} and \eqref{ineqf+g}, there results that
\begin{equation*}
Q_{\nu}(\widetilde{u})+\omega\|\widetilde{u}\|_{2}=\|\na\phi_{\omega}^{*}\|_{2}^{2}+\omega\|\phi_{\omega}^{*}\|_{2}^{2}+\nu\left\||x|^{-\f{1}{2}}\phi_{\omega}^{*}\right\|_{2}^{2}+|q|^{2}\left(\alpha+\theta_{\omega,\nu}\right)<Q_{\nu}(u)+\omega\|u\|_{2}^{2}, 
\end{equation*}
and
\begin{equation*}
I_{\nu}^{\omega}(\widetilde{u})=Q_{\nu}(\widetilde{u})+\omega\|\widetilde{u}\|_{2}-\|\widetilde{u}\|_{p}^{p}<I_{\nu}^{\omega}(u),
\end{equation*}
thus $I_{\nu}^{\omega}(\widetilde{u})<0$ and $\f{p-2}{2p}\left(Q_{\nu}(\widetilde{u})+\omega\|\widetilde{u}\|_{2}\right)<d_{\nu}(\omega)$, which is in contradiction with Lemma \ref{lem:dnu<normp}. 
\end{proof}

We are now ready to prove Theorem \ref{ex-min-act}.

\begin{proof}[Proof of Theorem \ref{ex-min-act}]
The existence part follows from Lemma  \ref{ex-part-actmin}. Let now $\omega>\omega_{\nu}$ and $u$ be a minimizer of the action $S_{\nu}^{\omega}$ at frequency $\omega$. By Lemma \ref{q-phi-neq0}, we know that $q\neq 0$, hence without loss of generality we can assume that $q>0$ by multiplying by a proper phase factor. By applying Lemma \ref{u-pos} and Proposition \ref{u-sym}, we conclude the proof.
\end{proof}

\section{Fixed mass ground states: proof of Theorem \ref{ex-gs-mu}}

This section is devoted to the proof of Theorem \ref{ex-gs-mu}. We start by recalling some results about the energy $E_{\nu}$, that were partially proved in \cite{Dinh-2021}.

\begin{proposition}
\label{ex-Enu}
Let $p\in (2,4)$, $\mu>0$ and $\nu<0$. Then 
there exists $u\in H^{1}_{\mu}(\R^{2})$ such that 
\begin{equation*}
E_{\nu}(u)=\Eps_{\nu}(\mu)<\Eps_{0}(\mu).
\end{equation*}
Moreover, every ground state is positive, radially symmetric and decreasing along the radial direction, up to a multiplication by a constant phase factor.
\end{proposition}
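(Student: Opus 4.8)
The plan is to treat Proposition \ref{ex-Enu} as a constrained minimization of $E_{\nu}$ on $H^{1}_{\mu}(\R^{2})$ via a concentration--compactness argument, the only nonstandard ingredients being the singular Coulomb term $\f{\nu}{2}\left\||x|^{-1/2}v\right\|_{2}^{2}$ and the strict binding inequality $\Eps_{\nu}(\mu)<\Eps_{0}(\mu)$. First I would establish that $E_{\nu}$ is bounded below and coercive along minimizing sequences. Using the Gagliardo--Nirenberg inequality \eqref{GN-stand}, which for $2<p<4$ reads $\|v\|_{p}^{p}\leq K_{p}\|\na v\|_{2}^{p-2}\mu$ with subcritical exponent $p-2<2$, together with Lemma \ref{lem:Quasi-Hardy} to bound $|\nu|\left\||x|^{-1/2}v\right\|_{2}^{2}\leq|\nu|\big(c_{\ep}\|v\|_{H^{1}(\R^{2})}^{2}+\ep^{-1}\mu\big)$, and choosing $\ep$ so small that $1-|\nu|c_{\ep}>0$, one obtains $E_{\nu}(v)\geq\f{1}{2}(1-|\nu|c_{\ep})\|\na v\|_{2}^{2}-C\mu\|\na v\|_{2}^{p-2}-C'$ on $H^{1}_{\mu}(\R^{2})$; since $p-2<2$ the right-hand side is coercive in $\|\na v\|_{2}$. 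Hence $\Eps_{\nu}(\mu)>-\infty$ and every minimizing sequence $(u_{n})$ is bounded in $H^{1}(\R^{2})$.

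The heart of the proof is the strict subadditivity used to rule out vanishing and dichotomy. I would first record the scaling identity $\Eps_{0}(s)=-c_{p}\,s^{2/(4-p)}$ with $2/(4-p)>1$, obtained by optimizing the Gagliardo--Nirenberg bound over $\|\na v\|_{2}$; this makes $\Eps_{0}$ negative and strictly subadditive. The binding inequality $\Eps_{\nu}(\mu)<\Eps_{0}(\mu)$ then follows by taking a minimizer $w$ of $\Eps_{0}(\mu)$ and computing $E_{\nu}(w)=\Eps_{0}(\mu)+\f{\nu}{2}\left\||x|^{-1/2}w\right\|_{2}^{2}<\Eps_{0}(\mu)$, since $\nu<0$ and $\left\||x|^{-1/2}w\right\|_{2}^{2}$ is finite and positive ($|x|^{-1}$ is locally integrable in $\R^{2}$ and $w\in H^{1}$); the same computation at every mass gives $\Eps_{\nu}(s)\leq\Eps_{0}(s)$ for all $s>0$. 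To upgrade to $\Eps_{\nu}(\mu)<\Eps_{\nu}(m)+\Eps_{0}(\mu-m)$ for $0\leq m<\mu$, I would show that $\Eps_{\nu}$ is itself strictly subadditive: scaling a near-minimizer $v$ of mass $s$ by $\sqrt{\theta}$ multiplies the quadratic part of $E_{\nu}$ by $\theta$ but rescales $\|v\|_{p}^{p}$ by $\theta^{p/2}>\theta$, yielding $\Eps_{\nu}(\theta s)<\theta\,\Eps_{\nu}(s)$ for $\theta>1$, whence strict subadditivity of $\Eps_{\nu}$ by the standard homogeneity argument. Then, for $0<m<\mu$, the weak binding $\Eps_{0}(\mu-m)\geq\Eps_{\nu}(\mu-m)$ and subadditivity of $\Eps_{\nu}$ give $\Eps_{\nu}(m)+\Eps_{0}(\mu-m)\geq\Eps_{\nu}(m)+\Eps_{\nu}(\mu-m)>\Eps_{\nu}(\mu)$, while the case $m=0$ is exactly the binding inequality.

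With these inequalities in hand the compactness is routine. Extracting a weak limit $u_{n}\deb u$ in $H^{1}(\R^{2})$ (and a.e.), the crucial observation is that the attractive potential is a relatively compact perturbation: splitting $\int|x|^{-1}|u_{n}|^{2}$ over $B_{R}$ and its complement, on $B_{R}$ one uses $|x|^{-1}\in L^{q}(B_{R})$ for $q<2$ and the compact embedding $H^{1}(B_{R})\hookrightarrow L^{s}(B_{R})$, and on the complement $|x|^{-1}\leq R^{-1}$, to conclude $\left\||x|^{-1/2}u_{n}\right\|_{2}^{2}\to\left\||x|^{-1/2}u\right\|_{2}^{2}$ and $\left\||x|^{-1/2}(u_{n}-u)\right\|_{2}^{2}\to0$. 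Combining this with the Brezis--Lieb decompositions \cite{Brezis-Lieb-83} of $\|\na\cdot\|_{2}^{2}$ and $\|\cdot\|_{p}^{p}$ gives $E_{\nu}(u_{n})=E_{\nu}(u)+E_{0}(u_{n}-u)+o(1)$ and $\|u_{n}-u\|_{2}^{2}\to\mu-m$ with $m=\|u\|_{2}^{2}$, so that $\Eps_{\nu}(\mu)\geq\Eps_{\nu}(m)+\Eps_{0}(\mu-m)$. The strict subadditivity then forces $m=\mu$, hence $u_{n}\to u$ in $L^{2}$, and weak lower semicontinuity yields $E_{\nu}(u)=\Eps_{\nu}(\mu)$, i.e. $u$ is a minimizer.

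Finally, the qualitative properties follow from symmetrization. Replacing a minimizer $u$ by $|u|$ leaves the mass, $\|u\|_{p}$, and the Coulomb term unchanged while not increasing $\|\na u\|_{2}$, so one may assume $u\geq0$ and then $u>0$ by the strong maximum principle for the Euler--Lagrange equation $-\Delta u+\nu|x|^{-1}u+\omega u=u^{p-1}$. For radial monotonicity I pass to the symmetric decreasing rearrangement $u^{*}$: by \eqref{equimeas} the mass and $\|u\|_{p}$ are preserved, by \eqref{PS} the gradient term does not increase, and, crucially, since $|x|^{-1}$ is itself radial and strictly decreasing, \eqref{HL-ineq} gives $\left\||x|^{-1/2}u^{*}\right\|_{2}^{2}\geq\left\||x|^{-1/2}u\right\|_{2}^{2}$, so the attractive term (with $\nu<0$) decreases; hence $E_{\nu}(u^{*})\leq E_{\nu}(u)$ and $u^{*}$ is again a minimizer, and the equality case in \eqref{HL-ineq} recorded in Section \ref{subsec-rearr} forces $u=u^{*}$. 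I expect the main obstacle to be the strict subadditivity of the second paragraph: dichotomy must be excluded uniformly in the split mass $m$, and this is exactly where the super-linear scaling of the nonlinearity, the explicit homogeneity of $\Eps_{0}$, and the attractivity $\nu<0$ of the Coulomb potential must be combined.
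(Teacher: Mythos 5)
Your proposal is correct in its overall strategy, but it takes a genuinely different route from the paper. The paper's proof of Proposition \ref{ex-Enu} is essentially a citation: existence, nonnegativity and radial symmetry of ground states (and the comparison with $\Eps_{0}(\mu)$) are imported wholesale from \cite[Theorem 1.8]{Dinh-2021}, and the only argument actually written out is the radial monotonicity, proved by exactly the rearrangement/equality-case reasoning of your final paragraph: apply \eqref{HL-ineq} with $f=|x|^{-1}$ (radial and strictly decreasing), \eqref{PS} and \eqref{equimeas}, and use $\nu<0$ to conclude $E_{\nu}(u^{*})<E_{\nu}(u)$ unless $u=u^{*}$. You instead rebuild the existence theory from scratch: coercivity via \eqref{GN-stand} and Lemma \ref{lem:Quasi-Hardy}, the homogeneity $\Eps_{0}(s)=-c_{p}\,s^{2/(4-p)}$, strict binding by testing $E_{\nu}$ on the free soliton, and a Brezis--Lieb/concentration-compactness exclusion of dichotomy, in which your treatment of the Coulomb term as a compact perturbation (splitting into $B_{R}$, where $|x|^{-1}\in L^{q}$ for $q<2$ and the embedding is compact, and the tail, where $|x|^{-1}\leq R^{-1}$) is the correct mechanism. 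What your approach buys is self-containedness; what the paper's buys is brevity, at the price of a black box.

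There is, however, one step that does not hold as you state it: the strict superhomogeneity $\Eps_{\nu}(\theta s)<\theta\,\Eps_{\nu}(s)$ for $\theta>1$. Scaling $v\mapsto\sqrt{\theta}\,v$ gives $\Eps_{\nu}(\theta s)\leq\theta E_{\nu}(v)-\f{\theta^{p/2}-\theta}{p}\|v\|_{p}^{p}$, and passing to a minimizing sequence $(v_{n})$ produces a \emph{strict} inequality only if $\liminf_{n}\|v_{n}\|_{p}^{p}>0$. In the potential-free case this is automatic from $\Eps_{0}(s)<0$, but here it is not: since $\nu<0$ the quadratic part is indefinite --- by Proposition \ref{prop:Classification}(iii) its infimum over mass-$s$ functions in $H^{1}(\R^{2})$ equals $-\nu^{2}s$ --- so a sequence with $\|v_{n}\|_{p}\to0$ can still have $E_{\nu}(v_{n})\to -\nu^{2}s/2<0$. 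As written, your argument only yields $\Eps_{\nu}(\theta s)\leq\theta\,\Eps_{\nu}(s)$, and with only the non-strict inequality the entire dichotomy exclusion collapses. The fix is short: test $E_{\nu}$ with the hydrogenic ground state $\psi(x)=c\,e^{-|\nu||x|}$ normalized to mass $s$, for which $\|\na\psi\|_{2}^{2}+\nu\bigl\||x|^{-1/2}\psi\bigr\|_{2}^{2}=-\nu^{2}s$, so that $\Eps_{\nu}(s)\leq E_{\nu}(\psi)=-\f{\nu^{2}s}{2}-\f{1}{p}\|\psi\|_{p}^{p}<-\f{\nu^{2}s}{2}$. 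Then any minimizing sequence with $\|v_{n}\|_{p}\to0$ would satisfy $\lim_{n}E_{\nu}(v_{n})\geq-\nu^{2}s/2>\Eps_{\nu}(s)$, a contradiction; hence $\liminf_{n}\|v_{n}\|_{p}^{p}>0$, your superhomogeneity becomes strict, the chain $\Eps_{\nu}(m)+\Eps_{0}(\mu-m)\geq\Eps_{\nu}(m)+\Eps_{\nu}(\mu-m)>\Eps_{\nu}(\mu)$ is restored, and the rest of your argument goes through.
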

\begin{proof}
Using \cite[Theorem 1.8]{Dinh-2021} we obtain the existence part and the fact that every ground state is nonnegative and radially symmetric up to a multiplication by a phase factor, thus it remains to prove that every ground state is decreasing along the radial direction up to a phase factor. Suppose by contradiction that there exists a positive ground state $u\in H^{1}(\Rd)$ such that $u$ is not decreasing along the radial direction. If we consider the radially symmetric decreasing rearrangement $u^{*}$ of $u$, then by applying \eqref{HL-ineq} with $f(x)=\f{1}{|x|}$ and $g=u$ we get that 
\begin{equation}
\label{HL-appl}
\int_{\Rd}\f{|u^{*}(x)|^{2}}{|x|}\,\ud x > \int_{\Rd}\f{|u(x)|^{2}}{|x|}\,\ud x,
\end{equation}
where the strict inequality follows from the fact that $u$ is not decreasing along the radial direction.
By \eqref{PS}, \eqref{equimeas} and \eqref{HL-appl} and using the fact that $\nu<0$, there results that 
 \begin{equation}
E_{\nu}(u^{*})<E_{\nu}(u),
\end{equation}  
being a contradiction and concluding the proof.
\end{proof}

\begin{proposition}
\label{Eps<Eps0}
Let $p\in(2,4)$, $\nu< 0$ and $\alpha\in\R$. Then, 
\begin{equation}
\label{eq-levcomp2}
\F_{\nu}(\mu)<\Eps_{\nu}(\mu)<0,\quad\forall \mu>0.
\end{equation} 
\end{proposition}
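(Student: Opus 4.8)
The plan is to split the statement into the two inequalities $\F_{\nu}(\mu)<\Eps_{\nu}(\mu)$ and $\Eps_{\nu}(\mu)<0$, the former being the substantial one. The non-strict bound $\F_{\nu}(\mu)\le\Eps_{\nu}(\mu)$ is immediate: every $v\in H^{1}_{\mu}(\Rd)$ sits in $\D_{\nu}^{\mu}$ with charge $q=0$, and for such $v$ the form value \eqref{eq:FormValue} collapses to $Q_{\nu}(v)=\|\na v\|_{2}^{2}+\nu\||x|^{-1/2}v\|_{2}^{2}$, so that $F_{\nu}(v)=E_{\nu}(v)$; taking the infimum over the smaller class $H^{1}_{\mu}(\Rd)\subset\D_{\nu}^{\mu}$ can only raise its value. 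It then remains to produce the negativity and to rule out equality.

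For $\Eps_{\nu}(\mu)<0$ I would run a mass-preserving scaling. Fix any nonzero bump $\phi\in H^{1}_{\mu}(\Rd)$ (so that $\||x|^{-1/2}\phi\|_{2}<\infty$ by Lemma \ref{lem:Quasi-Hardy}) and set $\phi_{\sigma}(x):=\sigma\phi(\sigma x)$, which preserves $\|\phi_{\sigma}\|_{2}^{2}=\mu$. A direct computation gives
\begin{equation*}
E_{\nu}(\phi_{\sigma})=\f{\sigma^{2}}{2}\|\na\phi\|_{2}^{2}+\f{\nu\sigma}{2}\left\||x|^{-1/2}\phi\right\|_{2}^{2}-\f{\sigma^{p-2}}{p}\|\phi\|_{p}^{p}.
\end{equation*}
Discarding the nonpositive nonlinear term and recalling $\nu<0$, the right-hand side is at most $\tfrac{\sigma}{2}\bigl(\sigma\|\na\phi\|_{2}^{2}+\nu\||x|^{-1/2}\phi\|_{2}^{2}\bigr)$, which is strictly negative as soon as $\sigma$ is small enough that the attractive Coulomb contribution (of order $\sigma$) dominates the gradient one (of order $\sigma^{2}$). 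Hence $\Eps_{\nu}(\mu)\le E_{\nu}(\phi_{\sigma})<0$.

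For the strict inequality I would argue by contradiction, exactly in the spirit of Lemma \ref{dnu<dtildenu}. Suppose $\F_{\nu}(\mu)=\Eps_{\nu}(\mu)$ and let $w$ be the minimizer of $E_{\nu}$ on $H^{1}_{\mu}(\Rd)$ furnished by Proposition \ref{ex-Enu}, so $w$ is positive and $F_{\nu}(w)=E_{\nu}(w)=\Eps_{\nu}(\mu)=\F_{\nu}(\mu)$. Then $w$ would be a ground state of \eqref{en-fan} at mass $\mu$, hence a bound state satisfying \eqref{bcond}, i.e. $w\in D(H_{\nu,\alpha})$. Writing $w=\phi_{\la}+q\G_{\la,\nu}$ and using that $\G_{\la,\nu}\notin H^{1}(\Rd)$ (its logarithmic singularity makes $\|\na\G_{\la,\nu}\|_{2}=\infty$), the assumption $w\in H^{1}(\Rd)$ forces $q=0$; the boundary condition $\phi_{\la}(0)=q(\alpha+\theta_{\la,\nu})$ in \eqref{eq:Domain2DExtension-OP} then reduces to $w(0)=0$. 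This contradicts the positivity of $w$: being positive, radial and decreasing, its value at the origin equals $\sup w>0$. Therefore equality is impossible and $\F_{\nu}(\mu)<\Eps_{\nu}(\mu)$.

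The delicate step is this last one, namely identifying the boundary value $\phi_{\la}(0)$ of \eqref{eq:Domain2DExtension-OP} with the genuine (radial) value of $w$ at the origin and arguing that the latter is finite and strictly positive; this rests on the regularity of elements of $D(H_{F})$ and on the qualitative properties of $w$ from Proposition \ref{ex-Enu}. If a quantitative gap is preferred, the same conclusion can be reached constructively: perturbing $w$ into $u_{q}:=a_{q}(w+q\G_{\la,\nu})\in\D_{\nu}^{\mu}$, with $a_{q}>0$ chosen to restore the mass constraint, one expands $F_{\nu}(u_{q})=\Eps_{\nu}(\mu)+A_{1}(\la)\,q+O(q^{2})$ as $q\to0^{+}$, with leading coefficient $A_{1}(\la)=-\la\scal{w}{\G_{\la,\nu}}+o_{\la}(1)$ coming from the term $\la(\|\phi_{\la}\|_{2}^{2}-\|u\|_{2}^{2})$ in \eqref{eq:FormValue}. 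Since $\la\G_{\la,\nu}\deb\delta_{0}$ as $\la\to+\infty$, this coefficient tends to $-w(0)<0$, so for $\la$ large and $q>0$ small $u_{q}$ is a competitor with energy below $\Eps_{\nu}(\mu)$, giving the strict inequality directly.
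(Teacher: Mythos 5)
Your proof is correct, and its core argument --- assuming $\F_{\nu}(\mu)=\Eps_{\nu}(\mu)$, the $E_{\nu}$-minimizer $w$ would be a ground state of $F_{\nu}$ at mass $\mu$, hence satisfy the boundary condition \eqref{bcond}, forcing $q=0$ and $w(0)=0$ in contradiction with the positivity granted by Proposition \ref{ex-Enu} --- is exactly the paper's own proof of this proposition. The only genuine additions are that you prove $\Eps_{\nu}(\mu)<0$ explicitly by the mass-preserving scaling $\phi_\sigma(x)=\sigma\phi(\sigma x)$ (the paper leaves this implicit, inheriting it from $\Eps_{\nu}(\mu)<\Eps_{0}(\mu)$ in Proposition \ref{ex-Enu}), and your final perturbative sketch with $u_q=a_q(w+q\G_{\la,\nu})$, which is a reasonable alternative route to strictness but is not needed.
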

\begin{proof}
Let $\nu<0$, $\mu>0$ and  $g$ be a ground state of $E_{\nu}$ of mass $\mu$. Note that $g$ cannot be a ground state of $F_{\nu}$ at mass $\mu$. Indeed, if this is the case, then $g$ has to satisfy \eqref{bcond}, that is $\phi_{\la}(0)=(\alpha+\theta_{\la,\nu})q$. Since $g\in H^{1}(\Rd)$, it follows that $q=0$ and, as consequence, that $g=\phi_{\la}$ and $g(0)=0$, but this contradicts $(iii)$ of Proposition \ref{ex-Enu}, in particular the fact that $g$ is positive. Since $g$ is not a ground state of $F_{\nu}$ at mass $\mu$, it follows that there exists $v\in \D_{\nu}^{\mu}$ such that $F_{\nu}(v)<F_{\nu}(g)=\Eps_{\nu}(\mu)$, entailing the thesis.
\end{proof}

\begin{lemma}
\label{q>C}
Let $p\in (2,4)$, $\alpha\in \R$, $\nu<0$ and $\mu>0$. If $u_{n}=\phi_{n,\la}+q_{n}\G_{\la,\nu}$ is a minimizing sequence of mass $\mu$ for the energy $F_{\nu}$, then there exists $\overline{n}\in \mathbb{N}$ and a constant $C$ such that $|q_{n}|>C$ for every $n\geq \overline{n}$.
\end{lemma}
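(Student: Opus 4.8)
The plan is to argue by contradiction. Suppose the conclusion fails, so that $\liminf_n |q_n| = 0$; passing to a subsequence which I do not relabel, I may assume $q_n \to 0$. The guiding intuition is that $q_n$ measures the weight of the singular component $\G_{\la,\nu}$ in the decomposition $u_n = \phi_{n,\la} + q_n \G_{\la,\nu}$: if this weight vanishes in the limit, then $u_n$ becomes asymptotically an $H^1(\Rd)$ function of mass $\mu$, so that $F_\nu(u_n)$ should approach the regular energy $E_\nu$ evaluated on such a function, which is bounded below by $\Eps_\nu(\mu)$. Since $F_\nu(u_n)\to\F_\nu(\mu)$ and $\F_\nu(\mu) < \Eps_\nu(\mu)$ by Proposition \ref{Eps<Eps0}, this is the desired contradiction. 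Throughout I would fix an admissible $\la \geq \max\{1, 4\nu^2\}$, large enough that $\la \geq |q_n|^2/\mu^{4/p}$ for all large $n$ (possible since $q_n\to 0$), so that Lemma \ref{lem:GNnu<0} applies.

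First I would establish that $(\phi_{n,\la})_n$ is bounded in $H^1(\Rd)$. Reading the quadratic form off \eqref{eq:FormValue} and estimating the Coulomb term from below via the Hardy-type inequality of Lemma \ref{lem:Quasi-Hardy} (choosing $\ep$ so small that $1-|\nu|c_\ep>0$), one bounds $Q_\nu(u_n)$ from below by $(1-|\nu|c_\ep)\|\na\phi_{n,\la}\|_2^2$ up to terms controlled by $\|\phi_{n,\la}\|_2^2$ and $|q_n|^2$. Bounding $\|u_n\|_p^p$ from above through the modified Gagliardo--Nirenberg inequality of Lemma \ref{lem:GNnu<0}, whose $q_n$-dependent constants stay bounded as $q_n\to 0$, produces only a term growing like $\|\na\phi_{n,\la}\|_2^{p-2}$ with $p-2<2$. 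Since $F_\nu(u_n)$ is bounded, these two facts force $\|\na\phi_{n,\la}\|_2$ to be bounded. Next, using $\G_{\la,\nu}\in L^2(\Rd)$ (Lemma \ref{Gla2}) and the mass constraint $\|u_n\|_2^2=\mu$, I would deduce $\|\phi_{n,\la}\|_2^2\to\mu$, because both the cross term $2\re\,\scal{\phi_{n,\la}}{q_n\G_{\la,\nu}}$ and $|q_n|^2\|\G_{\la,\nu}\|_2^2$ vanish as $q_n\to 0$.

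Then I would show $F_\nu(u_n) = E_\nu(\phi_{n,\la}) + o(1)$. From \eqref{eq:FormValue} the two $q_n$-dependent pieces $(\alpha+\theta_{\la,\nu})|q_n|^2$ and $\la(\|\phi_{n,\la}\|_2^2-\mu)$ tend to $0$, so $Q_\nu(u_n) = \|\na\phi_{n,\la}\|_2^2 + \nu\||x|^{-1/2}\phi_{n,\la}\|_2^2 + o(1)$, the Coulomb term being finite by Lemma \ref{lem:Quasi-Hardy}; and since $\G_{\la,\nu}\in L^p(\Rd)$ (Lemma \ref{Glap}), $\|q_n\G_{\la,\nu}\|_p\to 0$ yields $\|u_n\|_p^p = \|\phi_{n,\la}\|_p^p + o(1)$. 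Finally, rescaling to $\widetilde\phi_n := \sqrt\mu\,\phi_{n,\la}/\|\phi_{n,\la}\|_2$, which is well defined for large $n$ and satisfies $\widetilde\phi_n\in H^1_\mu(\Rd)$, the scaling factor tends to $1$ and all quadratic and $L^p$ quantities are bounded, so $E_\nu(\widetilde\phi_n) = E_\nu(\phi_{n,\la}) + o(1) \geq \Eps_\nu(\mu)$ by definition of $\Eps_\nu(\mu)$. Passing to the limit gives $\F_\nu(\mu) = \lim_n F_\nu(u_n) \geq \Eps_\nu(\mu)$, contradicting Proposition \ref{Eps<Eps0} and completing the argument.

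I expect the main obstacle to be the coercivity step: one must simultaneously control the sign-indefinite Coulomb term through the Hardy inequality and the nonstandard Gagliardo--Nirenberg bound of Lemma \ref{lem:GNnu<0}, and then check that every error term is genuinely $o(1)$ because it is multiplied by a bounded quantity. Once $(\phi_{n,\la})_n$ is known to be bounded in $H^1(\Rd)$ and $\|\phi_{n,\la}\|_2^2\to\mu$, the remaining identification $F_\nu(u_n)=E_\nu(\phi_{n,\la})+o(1)$ and the rescaling are routine.
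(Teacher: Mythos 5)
Your proposal is correct and follows essentially the same route as the paper's proof: contradiction with $q_n\to 0$ along a subsequence, coercivity of $F_\nu$ via Lemma \ref{lem:Quasi-Hardy} plus a Gagliardo--Nirenberg bound to get $\|\nabla\phi_{n,\la}\|_2$ bounded (using $p-2<2$), the identification $F_\nu(u_n)=E_\nu(\phi_{n,\la})+o(1)$, the rescaling $\sqrt{\mu}\,\phi_{n,\la}/\|\phi_{n,\la}\|_2$, and the contradiction with Proposition \ref{Eps<Eps0}. The only cosmetic difference is that you invoke the modified inequality of Lemma \ref{lem:GNnu<0} (legitimately, since $q_n\to 0$ makes its $\la$-constraint hold for a fixed $\la$), whereas the paper simply combines the triangle inequality with the standard bound \eqref{GN-stand} and the finiteness of $\|\G_{\la,\nu}\|_p$; both yield the same coercivity.
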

\begin{proof}
We prove it by contradiction. Suppose that there exists a subsequence of $q_{n}$, that we do not rename, such that $q_{n}\to 0$. Then, $\|\phi_{n,\la}\|_{2}^{2}-\mu\to 0$ as $n\to+\infty$, so that $\|\phi_{n,\la}\|_{2}$ is bounded. Moreover, by applying \eqref{GN-stand} to \eqref{en-fan}, there results
\begin{equation*}
\begin{split}
F_{\nu}(u_{n}) &\geq \f{1}{2}\|\nabla \phi_{n,\la}\|_{2}^{2}+\f{\la}{2}\left(\|\phi_{n,\la}\|_{2}^{2}-\mu\right)+\f{\nu}{2}\left\||x|^{-\f{1}{2}}\phi_{n,\la}\right\|_{2}^{2}+\f{\alpha+\theta_{\la,\nu}}{2}|q_{n}|^{2}\\
&-\f{2^{p-1}}{p}\left(K_{p}\|\nabla \phi_{n,\la}\|_{2}^{p-2}\|\phi_{n,\la}\|_{2}^{2}+|q_{n}|^{p}\|\G_{\la,\nu}\|_{p}^{p}\right)\\
&=\f{1}{2}\|\nabla \phi_{n,\la}\|_{2}^{2}-\f{2^{p-1}K_{p}}{p}\|\nabla \phi_{n,\la}\|_{2}^{p-2}\|\phi_{n,\la}\|_{2}^{2}+\f{\nu}{2}\left\||x|^{-\f{1}{2}}\phi_{n,\la}\right\|_{2}^{2}+o(1)
\end{split}
\end{equation*}
as $n\to+\infty$. By applying Lemma \ref{lem:Quasi-Hardy} with $0<\ep\leq e^{-1}$ such that $c_{\ep}=\min\left\{\f{1}{2|\nu|},\f{4c}{e}\right\}$, where $c>0$ is the same as in Proposition \ref{Ed-Trieb}, we get
\begin{equation*}
F_{\nu}(u_{n}) \geq \f{1}{4}\|\nabla \phi_{n,\la}\|_{2}^{2}-\f{C_{p}}{p}\|\nabla \phi_{n,\la}\|_{2}^{p-2}\|\phi_{n,\la}\|_{2}^{2}-\f{|\nu|}{2}\left(c_{\ep}+\f{1}{\ep}\right)\|\phi_{n,\la}\|_{2}^{2}+o(1),
\end{equation*} 
so that $\|\nabla \phi_{n,\la}\|_{2}$ is bounded since $F_{\nu}(u_{n})$ is bounded from above and $p<4$.

We introduce the sequence $\xi_{n}:=\f{\sqrt{\mu}}{\|\phi_{n,\la}\|_{2}}\phi_{n,\la}$, for which $\|\xi_{n}\|_{2}^{2}=\mu$ and $\|\nabla \xi_{n}\|_{2}^{2}$ is bounded. Therefore, using the fact that $q_{n}\to 0$ and $\phi_{n,\la}-u_{n}\to 0$ strongly in every $L^{p}(\R^{2})$ with $2\leq p<\infty$, one gets
\begin{equation*}
F_{\nu}(u_{n})=E_{\nu}(\phi_{n,\la})+o(1)=E_{\nu}(\xi_{n})+o(1)\geq \Eps_{\nu}(\mu)+o(1),\quad n\to +\infty.
\end{equation*}
Passing to the limit, there results $\F_{\nu}(\mu)\geq \Eps_{\nu}(\mu)$, being in contradiction with Proposition \ref{Eps<Eps0}.
\end{proof}

\begin{lemma}
\label{minimseq2}
Let $p\in(2,4)$, $\alpha\in\R$, $\nu<0$ and $\mu>0$. If  $(u_{n})_n$ is a minimizing sequence of mass $\mu$ for the energy $F_{\nu}$, then it is bounded in $L^r(\Rd)$ for every $r\geq2$, and there exists $u\in \D_{\nu}\setminus H^1(\Rd)$ such that, up to subsequences,
\begin{equation}
\label{lim-1}
u_n\deb u \quad \text{in}\quad L^2(\Rd), \quad u_n\to u \quad\text{a.e. in} \quad \Rd, \quad n \to +\infty.
\end{equation}
In particular, if one fixes $\lambda\geq \max\{1,4\nu^{2}\}$ and the decomposition $u_{n}=\phi_{n,\la}+q_{n}\G_{\la,\nu}$, then $(\phi_{n,\la})_n$ and $(q_n)_n$ are bounded in $H^1(\Rd)$ and $\C$, respectively, and there exist $\phi_{\la}\in H^{1}(\Rd)$ and $q\in \C\setminus\{0\}$ such that $u=\phi_\la+q\G_{\la,\nu}$ and, up to subsequences,
\begin{equation}
\label{lim-2}
\phi_{n,\la}\deb\phi_{\la} \quad \text{in}\quad L^2(\Rd), \quad \na\phi_{n,\la}\deb\na\phi_{\la} \quad \text{in}\quad L^2(\Rd),\quad q_{n}\rightarrow q\quad \text{in}\quad \C, \quad n \to +\infty.
\end{equation}
\end{lemma}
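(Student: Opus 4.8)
The plan is to establish coercivity of $F_{\nu}$ along the mass constraint, which yields all the boundedness and compactness assertions at once, and then to combine it with Lemma~\ref{q>C} to place the limit off $H^1(\Rd)$. Set $M:=\sup_n F_{\nu}(u_n)<+\infty$, finite since $(u_n)_n$ is minimizing. The argument splits into two phases: first I would bound the charges $(q_n)_n$ from above, and only afterwards fix a single $\la$ and bound the regular parts $(\phi_{n,\la})_n$ in $H^1(\Rd)$. The delicate point is the first phase, and the crucial structural fact exploited there is that the \emph{charge does not depend on the choice of} $\la$: I am free to adapt $\la$ to each $u_n$ in the Gagliardo--Nirenberg estimate while keeping $q_n$ fixed.

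For the upper bound on $|q_n|$, I would argue by contradiction assuming $|q_n|\to+\infty$ along a subsequence and choose $\la_n:=\max\{1,4\nu^2,|q_n|^2\mu^{-4/p}\}$, so that $\la_n\to+\infty$ and Lemma~\ref{lem:GNnu<0} applies to the decomposition $u_n=\phi_{n,\la_n}+q_n\G_{\la_n,\nu}$ with $\|u_n\|_2^2=\mu$. On the energy side, bounding the Coulomb term from below via Lemma~\ref{lem:Quasi-Hardy} with $\ep$ fixed so that $|\nu|c_{\ep}\le\tfrac12$, and using that $\la_n$ is eventually larger than $|\nu|(c_\ep+\ep^{-1})$, gives
\[
Q_{\nu}(u_n)\ \ge\ \tfrac12\|\na\phi_{n,\la_n}\|_2^2-\la_n\mu+(\alpha+\theta_{\la_n,\nu})|q_n|^2 .
\]
For $n$ large $\la_n=|q_n|^2\mu^{-4/p}$, so $-\la_n\mu+(\alpha+\theta_{\la_n,\nu})|q_n|^2=|q_n|^2\big(\alpha+\theta_{\la_n,\nu}-\mu^{(p-4)/p}\big)$, and by Lemma~\ref{lem:proptheta} one has $\theta_{\la_n,\nu}\sim(4\pi)^{-1}\log\la_n\to+\infty$, so this term grows \emph{faster than quadratically} in $|q_n|$. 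In the nonlinear term, the refined inequality of Lemma~\ref{lem:GNnu<0} produces only $|q_n|^{p/2}$ and $|q_n|^{(4-p)/2}\|\na\phi_{n,\la_n}\|_2^{p-2}$; since $p<4$, a Young inequality absorbs the gradient into $\tfrac14\|\na\phi_{n,\la_n}\|_2^2$ at the cost of a term at most linear in $|q_n|$, while $|q_n|^{p/2}$ is subquadratic. Hence $2F_{\nu}(u_n)\ge|q_n|^2\big(\alpha+\theta_{\la_n,\nu}-\mu^{(p-4)/p}\big)-C\big(1+|q_n|+|q_n|^{p/2}\big)\to+\infty$, contradicting $F_{\nu}(u_n)\le M$. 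This is the main obstacle: with a \emph{fixed} $\la$ the term $|q_n|^p\|\G_{\la,\nu}\|_p^p$ arising from the crude triangle inequality grows super-quadratically and cannot be controlled; the rescaling $\la_n\sim|q_n|^2$ is precisely what tames it (reducing its growth to $|q_n|^{p/2}$ through Lemma~\ref{Glap}) while simultaneously activating the logarithmically growing coefficient $\theta_{\la_n,\nu}$.

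Once $(q_n)_n$ is bounded, I would fix any $\la\ge\max\{1,4\nu^2\}$. Then $\|\phi_{n,\la}\|_2\le\sqrt{\mu}+|q_n|\|\G_{\la,\nu}\|_2$ is bounded, all $q_n$-dependent contributions to $Q_{\nu}(u_n)$ are bounded, and the crude estimate $\|u_n\|_p^p\le 2^{p-1}\big(K_p\|\na\phi_{n,\la}\|_2^{p-2}\|\phi_{n,\la}\|_2^2+|q_n|^p\|\G_{\la,\nu}\|_p^p\big)$, combined with a Young inequality (again $p-2<2$) and Lemma~\ref{lem:Quasi-Hardy}, yields $2F_{\nu}(u_n)\ge\tfrac14\|\na\phi_{n,\la}\|_2^2-C$. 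Since $F_{\nu}(u_n)\le M$, this bounds $\|\na\phi_{n,\la}\|_2$, hence $(\phi_{n,\la})_n$ in $H^1(\Rd)$ and, through the embedding $H^1(\Rd)\hookrightarrow L^r(\Rd)$ for every $r\ge2$ and $\G_{\la,\nu}\in L^r(\Rd)$, the sequence $(u_n)_n$ in every $L^r(\Rd)$, $r\ge2$.

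Finally I would extract the limit. By weak compactness, up to subsequences $\phi_{n,\la}\deb\phi_{\la}$ in $H^1(\Rd)$ (so $\phi_{n,\la}\deb\phi_{\la}$ and $\na\phi_{n,\la}\deb\na\phi_{\la}$ in $L^2(\Rd)$) and $q_n\to q$ in $\C$; setting $u:=\phi_{\la}+q\G_{\la,\nu}\in\D_{\nu}$ gives $u_n\deb u$ in $L^2(\Rd)$, while the Rellich embedding $H^1(\Rd)\hookrightarrow L^2_{\mathrm{loc}}(\Rd)$ together with the pointwise convergence of $q_n\G_{\la,\nu}$ give $u_n\to u$ a.e.\ in $\Rd$ along a further subsequence. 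Lemma~\ref{q>C} provides $|q_n|>C>0$ for large $n$, whence $|q|\ge C>0$, so $q\neq0$ and, since $\G_{\la,\nu}\notin H^1(\Rd)$, one concludes $u\in\D_{\nu}\setminus H^1(\Rd)$, as claimed.
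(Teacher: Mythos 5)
Your proof is correct, and it hinges on the same central mechanism as the paper's: the adaptive choice $\la_n\sim|q_n|^2$ (up to constants), which makes Lemma \ref{lem:GNnu<0} applicable, reduces the nonlinear term to subquadratic growth $|q_n|^{p/2}$, and activates the logarithmic divergence of $\theta_{\la_n,\nu}$, with Lemma \ref{lem:Quasi-Hardy} absorbing the Coulomb term into the gradient. The architecture, however, is genuinely different. The paper invokes Lemma \ref{q>C} \emph{at the outset}: the lower bound $|q_n|>C$ is what allows it to set $\la_n=M|q_n|^2\|u_n\|_2^{-8/p}$ with a fixed constant $M$ guaranteeing $\la_n\geq\max\{1,4\nu^2\}$ for \emph{all} $n$; it then proves boundedness of $\|\na\phi_n\|_2$ and $|q_n|$ \emph{simultaneously}, exhibiting a coercive function $g$ with $F_\nu(u_n)\geq g(\|\na\phi_n\|_2,|q_n|)\to+\infty$ as $\|\na\phi_n\|_2^2+|q_n|^2\to\infty$, and finally transfers the bounds to the fixed-$\la$ decomposition by writing $\phi_{n,\la}=\phi_n+q_n(\G_{\la_n,\nu}-\G_{\la,\nu})$ and running a second energy estimate. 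You instead split the argument: the adaptive decomposition is used only inside a contradiction step, where the hypothesis $|q_n|\to\infty$ automatically forces $\la_n=|q_n|^2\mu^{-4/p}\to\infty$, so no uniform constant $M$ and no a priori lower bound on $|q_n|$ are needed there; once $(q_n)_n$ is bounded, the gradient bound is obtained directly in the fixed-$\la$ decomposition from the crude triangle-inequality Gagliardo--Nirenberg estimate, whose coefficients are by then bounded. What your route buys is a cleaner division of labor -- Lemma \ref{q>C} enters only at the very end, to rule out $q=0$ in the limit, and the transfer step between decompositions disappears -- at the price of two separate energy estimates instead of one coercivity bound. Both arguments rely on the fact (which you state explicitly and the paper uses silently) that the charge $q_n$ is independent of the $\la$ chosen in the decomposition, and on $\G_{\la,\nu}\notin H^1(\Rd)$, which is what places the limit $u$ in $\D_{\nu}\setminus H^1(\Rd)$.
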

\begin{proof}
Let $(u_{n})_n$ be a  minimizing sequence of mass $\mu$ for the energy $F_{\nu}$. By Banach-Alaoglu Theorem, there exists $u\in L^{2}(\Rd)$ such that $u_n\deb u$ in $L^2(\Rd)$ up to subsequences. Moreover, owing to Lemma \ref{q>C} we can suppose without loss of generality that there exists $C>0$ such that $|q_n| > C$ for every $n$, then we can rely on the decomposition $u_{n}=\phi_{n}+q_{n}\G_{\la_{n},\nu}$ with $\la_{n}:=M\f{|q_{n}|^{2}}{\|u_{n}\|_{2}^{\f{8}{p}}}$ and $M=\max\left\{1,\f{\mu^{\f{4}{p}}}{C^{2}}, \f{4\nu^{2}\mu^{\f{4}{p}}}{C^{2}}\right\}$. By this choice of $M$, there results that 
 \begin{equation*}
 \la_{n}\geq \max\left\{1, 4\nu^{2}, \f{|q_{n}|^{2}}{\|u_{n}\|_{2}^{\f{8}{p}}}\right\},
 \end{equation*}
hence Lemma \ref{Gla2}, Lemma \ref{Glap} and Lemma \ref{lem:GNnu<0} apply. In view of this decomposition, by the triangle inequality and Lemma \ref{Gla2} there results
\begin{equation}
\label{phin-mu}
\|\phi_{n}\|_{2}^{2}\leq 2\left(\|u_{n}\|_{2}^{2}+|q_{n}|^{2}\|\G_{\la_{n},\nu}\|_{2}^{2}\right)=2\left(1+C_{p}|q_{n}|^{\f{4-p}{2}}\right)\mu,
\end{equation}
and the energy reads as 
\begin{multline}
\label{Fanu-q0}
F_{\nu}(u_{n})=\f{\|\na\phi_{n}\|^2_{2}}{2}+\f{M}{2}\f{|q_{n}|^{2}}{\|u_{n}\|_{2}^{\f{8}{p}}}\|\phi_{n}\|^2_{2}+\f{\nu}{2}\left\||x|^{-\f{1}{2}}\phi_{n}\right\|_{2}^{2}\\
+\left(\alpha-\f{M}{\|u_{n}\|_{2}^{\f{8}{p}-2}}+\theta_{\la_{n},\nu}\right)\f{|q_{n}|^{2}}{2}-\f{\|u_{n}\|^{p}_{p}}{p}.
\end{multline}

By applying Lemma \ref{lem:GNnu<0}, Lemma \ref{lem:Quasi-Hardy} (with $0<\ep\leq e^{-1}$ to be chosen), \eqref{LB-UB-theta} and \eqref{phin-mu} to \eqref{Fanu-q0} and by recalling that $\la_{n}\geq 4\nu^{2}$, there results
\begin{equation}
\begin{split}
F_{\nu}(u_{n})&\geq \f{1-|\nu|c_{\ep}}{2}\|\na\phi_{n}\|^2_{2}+\left(2\pi\left(\alpha-M\mu^{1-\f{4}{p}}\right)+\log\left(\f{M|q_{n}|}{\mu^{\f{2}{p}}}+\nu\right)-4+2\gamma\right)\f{|q_{n}|^2}{4\pi}\\
-&|\nu|\left(c_{\ep}+\ep^{-1}\right)\left(1+C_{p}|q_{n}|^{\f{4-p}{2}}\right)\mu-2^{p-1}\left[K_{p}\left(1+C_p|q_{n}|^{\f{4-p}{2}}\right) \Vert \nabla \phi_n \Vert_2^{p-2}  + \widetilde{C}_{p}|q_{n}|^{\f{p}{2}}\right]\mu.
\end{split}
\end{equation}

Since $c_{\ep}\to 0$ as $\ep\to 0$, it is possible to choose $0<\ep\leq e^{-1}$ such that $c_{\ep}\leq\f{1}{2|\nu|}$. Consider now the function $g:\R^{+}\times\R^{+}\to \R$ defined as
\begin{equation*}
g(x,y):=\f{x^{2}}{4}+\f{c_{1}+\log\left(M\mu^{-\f{2}{p}}y\right)}{4\pi}y^{2}-\left(1+C_{p}y^{\f{4-p}{2}}\right)\left(c_{2}+c_{3}x^{p-2}\right)-c_{4}y^{p-2},
\end{equation*}
with $c_{1}:=2\pi\left(\alpha-M\mu^{1-\f{4}{p}}\right)-4+2\gamma$, $c_{2}:=|\nu|\left(c_{\ep}+\ep^{-1}\right)\mu$, $c_{3}:=2^{p-1}K_{p}\mu$ and $c_{4}:=2^{p-1}\widetilde{C_{p}}\mu$.
Since $\f{4-p}{2}<2$, $p-2<2$, $\frac{p}{2}<2$ and $\f{4-p}{2}+p-2<2$ if $2<p<4$, there results that 
\[
g(x,y)\to +\infty\quad \text{as}\quad x^{2}+y^{2}\to+\infty.
\]
Combing the facts that $F_{\nu}(u_{n})\geq g\left(\|\na\phi_{n}\|_{2}, |q_{n}|\right)\to +\infty$ as $\|\na\phi_{n}\|_{2}^{2}+|q_{n}|^{2}\to+\infty$ and $F_{\nu}(u_{n})\to \F_{\nu}(\mu)<0$ as $n\to+\infty$, it follows that $(\nabla\phi_n)_n$ is bounded in $L^2(\Rd)$ and $(q_n)_n$ is bounded in $\C$, so that, up to subsequences, $q_n\to q$ and $q \neq 0$ since $|q_n| > C > 0$. Furthermore, in view of \eqref{phin-mu} also $(\phi_{n})_{n}$ is bounded in $L^{2}(\Rd)$.

Fix now $\la \geq 4\nu^{2}$ and consider the decomposition $u_{n}=\phi_{n,\la}+q_{n}\G_{\la,\nu}$ with $\phi_{n,\la}:=\phi_{n}+q_{n}(\G_{\la_{n},\nu}-\G_{\la,\nu})$. By \eqref{phin-mu} and Lemma \ref{Gla2}, we have that
\begin{equation*}
\|\phi_{n,\la}\|_{2}^{2}\leq 2\|\phi_{n}\|_{2}^{2}+4|q_{n}|^{2}\left(\|\G_{\la_{n},\nu}\|_{2}^{2}+\|\G_{\la,\nu}\|_{2}^{2}\right)\leq 4\left(1+2C_{p}|q_{n}|^{\f{4-p}{2}}\right)\mu+\f{4C_{p}|q_{n}|^{2}}{\la^{\f{p}{4}}},
\end{equation*}
hence $(\phi_{n,\la})_{n}$ is bounded in $L^{2}(\Rd)$. Moreover, by using the convexity of $f(t)=t^{p}$ together with \eqref{GN-stand}, Lemma \ref{lem:GNnu<0} and Lemma \ref{lem:Quasi-Hardy} (with $\ep$ to be chosen), we get
\begin{equation*}
\begin{split}
F_{\nu}(u_{n})\geq &\f{1-|\nu|c_{\ep}}{2}\|\nabla \phi_{n,\la}\|_{2}^{2}-\f{2^{p-1}K_{p}}{p}\|\na \phi_{n,\la}\|_{2}^{p-2}\|\phi_{n,\la}\|_{2}^{2}-\f{\la}{2}\mu\\
&-\f{|\nu|}{2}\left(c_{\ep}+\f{1}{\ep}\right)\|\phi_{n,\la}\|_{2}^{2}+\f{\alpha+\theta_{\la,\nu}}{2}|q_{n}|^{2}-\f{2^{p-1}\widetilde{C}_{p}}{p\la^{\f{p}{4}}}|q_{n}|^{p}.
\end{split}
\end{equation*}
By choosing $0<\ep\leq e^{-1}$ such that $c_{\ep}=\f{1}{2|\nu|}$ as above and using the boundedness of $(q_{n})_{n}$ and $\|\phi_{n,\la}\|_{2}$, we deduce that $(\na\phi_{n,\la})_{n}$ is bounded in $L^{2}(\Rd)$, hence there exists $\phi_{\la}\in H^{1}(\Rd)$ such that, up to subsequences, $\phi_{n,\la}\deb \phi$ and $\na \phi_{n,\la}\deb \na \phi_{\la}$ in $L^{2}(\Rd)$: in particular, $\phi_\la=u-q\G_\la$.
Furthermore, by Rellich-Kondrakov theorem, $\phi_{n,\la}\to\phi_\la$ in $L^r_{loc}(\Rd)$, for every $r>2$, so that $u_n\to u$ a.e. in $\Rd$.
\end{proof}

The next lemma establishes a connection between ground states at fixed mass of \eqref{en-fan} and minimizers of the action \eqref{S-nu-om} at fixed frequency, allowing to deduce the properties of fixed mass ground states by knowing the properties of action minimizers. Such a lemma has been proved in \cite{Dov-Serra-Tilli-22} and \cite{Jeanjean-22} for the standard NLS energy: we report here the proof for the sake of completeness.

\begin{lemma}
\label{GSareAM}
Let $p>2$, $\alpha\in \R$, $\nu\in \R\setminus\{0\}$ and $\mu>0$. If $u$ is a ground state of $F_{\nu}$ of mass $\mu$, then it is also a minimizer of the action $S_{\nu}^{\omega}$ at frequency $\omega>\mu^{-1}(\|u\|_{p}^{p}-Q_{\nu}(u))$. Moreover, if $\nu<0$, then $\omega>\omega_{\nu}$.
\end{lemma}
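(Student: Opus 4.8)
The plan is to use that a fixed–mass ground state is automatically a bound state, so it lies on exactly one Nehari manifold, and then to transport minimality from the mass–constrained energy problem to the action problem via a one–parameter fibering. I would organise the argument in three steps. \emph{Step 1 (locating the frequency).} Since $u$ minimises $F_\nu$ on $\D_\nu^\mu$, it is a bound state and satisfies \eqref{bcond}--\eqref{EL-eq} for a unique $\omega\in\R$. Pairing \eqref{EL-eq} with $u$ and using $\scal{H_{\nu,\alpha}u}{u}=Q_\nu(u)$ together with $\|u\|_2^2=\mu$ gives $Q_\nu(u)-\|u\|_p^p+\omega\mu=0$, so the Lagrange multiplier is forced to be $\omega=\mu^{-1}(\|u\|_p^p-Q_\nu(u))$ and $I_\nu^\omega(u)=0$, i.e. $u\in N_\nu^\omega$. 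Consequently $d_\nu(\omega)\le S_\nu^\omega(u)=F_\nu(u)+\tfrac\omega2\mu=\F_\nu(\mu)+\tfrac\omega2\mu$.

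\emph{Step 2 (action minimality).} To prove the reverse inequality I would take an arbitrary $v\in N_\nu^\omega$ and rescale it to mass $\mu$ by setting $\tau:=(\mu/\|v\|_2^2)^{1/2}$, so that $\tau v\in\D_\nu^\mu$ and hence $F_\nu(\tau v)\ge\F_\nu(\mu)=F_\nu(u)$. The point is that along the fibre $t\mapsto S_\nu^\omega(tv)$ the membership $v\in N_\nu^\omega$ yields $Q_\nu(v)+\omega\|v\|_2^2=\|v\|_p^p$, whence
\[
S_\nu^\omega(tv)=\|v\|_p^p\Big(\tfrac{t^2}{2}-\tfrac{t^p}{p}\Big),
\]
which for $p>2$ is maximised at $t=1$; in particular $S_\nu^\omega(\tau v)\le S_\nu^\omega(v)$. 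Therefore
\[
F_\nu(u)\le F_\nu(\tau v)=S_\nu^\omega(\tau v)-\tfrac\omega2\mu\le S_\nu^\omega(v)-\tfrac\omega2\mu,
\]
so that $S_\nu^\omega(v)\ge F_\nu(u)+\tfrac\omega2\mu=S_\nu^\omega(u)$. Since $v\in N_\nu^\omega$ was arbitrary, $S_\nu^\omega(u)=d_\nu(\omega)$ and $u$ is a minimiser of the action at frequency $\omega$.

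\emph{Step 3 (the bound $\omega>\omega_\nu$ for $\nu<0$).} Here I would test with the minimiser of the Rayleigh quotient in $(iii)$ of Proposition \ref{prop:Classification}: pick $\Phi\in\D_\nu\setminus\{0\}$ with $Q_\nu(\Phi)=-\omega_\nu\|\Phi\|_2^2$ (concretely $\Phi=\G_{\omega_\nu,\nu}$), and scale it to mass $\mu$ via $t=(\mu/\|\Phi\|_2^2)^{1/2}$. Then
\[
\F_\nu(\mu)\le F_\nu(t\Phi)=-\tfrac{\omega_\nu}{2}\mu-\tfrac{t^p}{p}\|\Phi\|_p^p<-\tfrac{\omega_\nu}{2}\mu,
\]
the strict inequality coming from $\|\Phi\|_p^p>0$. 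On the other hand Step 1 and $u\in N_\nu^\omega$ give $\F_\nu(\mu)=F_\nu(u)=\f{p-2}{2p}\|u\|_p^p-\tfrac\omega2\mu$. Comparing the two expressions yields $\f{p-2}{2p}\|u\|_p^p<\tfrac\mu2(\omega-\omega_\nu)$, and since the left–hand side is strictly positive we conclude $\omega>\omega_\nu$.

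The main obstacle is Step 2: the rescaling must simultaneously restore the mass constraint and stay compatible with the Nehari (fibre) structure. What makes this work cleanly is that a pure scalar dilation along the fibre—rather than a spatial rescaling, which $Q_\nu$ does not respect because of the point interaction and the Coulomb term—both fixes the mass and exhibits the Nehari point as the fibre maximum. The strict gap $\omega>\omega_\nu$ in Step 3 is the other delicate point and rests on the strict negativity encoded in the test function $\G_{\omega_\nu,\nu}$.
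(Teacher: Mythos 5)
Your proof is correct and follows essentially the same route as the paper: the same scalar fibering argument (rescaling a Nehari competitor to mass $\mu$ and using that $t\mapsto S_{\nu}^{\omega}(tv)$ attains its maximum at $t=1$), phrased directly rather than by contradiction, together with the same Lagrange-multiplier identity $\F_{\nu}(\mu)=\tfrac{p-2}{2p}\|u\|_{p}^{p}-\tfrac{\omega}{2}\mu$ and energy-level comparison for the bound $\omega>\omega_{\nu}$. Your Step 3 merely makes explicit, via the test function $\G_{\omega_{\nu},\nu}$, the strict inequality $\F_{\nu}(\mu)<-\tfrac{\omega_{\nu}}{2}\mu$ that the paper asserts more tersely from point (iii) of Proposition \ref{prop:Classification}.
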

\begin{proof}
Let $u$ be a ground state of $F_{\nu}$ of mass $\mu$ and let $\omega>0$ be the associated Lagrange multiplier, given by $\omega=\mu^{-1}(\|u\|_p^p-Q_{\nu}(u))$. Suppose, by contradiction, that there exists $v=\eta_\la+\xi\G_{\la,\nu} \in N_{\nu}^{\omega}$ such that $S_{\nu}^{\omega}(v)<S_{\nu}^{\omega}(u)$ and let $\beta>0$ be such that $\|\beta v\|_2^2=\mu$. Then
\begin{equation*}
S_{\nu}^{\omega}(\beta v)=\f{\beta^2}{2}(Q_{\nu}(v)+\omega\|v\|_{2}^{2})-\f{\beta^p}{p}\|v\|_p^p.
\end{equation*}
Computing the derivative with respect to $\beta$ and using that $v\in N_{\nu}^{\omega}$, we get 
\begin{equation*}
\f{d}{d\beta}S_{\nu}^{\omega}(\beta v)=\beta (Q_{\nu}(v)+\omega\|v\|_{2}^{2})-\beta^{p-1}\|v\|_p^p=\beta I_{\nu}^{\omega}(v)+(\beta-\beta^{p-1})\|v\|_p^p=\beta(1-\beta^{p-2})\|v\|_p^p,
\end{equation*}
which is greater than zero if and only if $0<\beta< 1$. Hence $S_{\nu}^{\omega}(\beta v)\le S_{\nu}^{\omega}(v)$, for every $\beta>0$. Therefore, since $S_{\nu}^{\omega}(\beta v)\le S_{\nu}^{\omega}(v) <S_{\nu}^{\omega}(u)$,
\begin{equation*}
F_{\nu}(\beta v)+\f{\omega}{2}\|\beta v\|_2^2<F_{\nu}(u)+\f{\omega}{2}\|u\|_2^2,
\end{equation*}
and using the fact that $\|\beta v\|_2^2=\|u\|_2^2=\mu$, this entails $F_{\nu}(\beta v)<F_{\nu}(u)$. However, as this contradicts the assumptions on $u$, we obtain that $u$ is a minimizer of the action $S_{\nu}^{\omega}$ at frequency $\omega$.

Suppose now that $\nu<0$. By the Lagrange Multiplier Theorem, one has
 \[
  \langle F_{\nu}'(u),v\rangle+\omega\langle u,v\rangle=0,\qquad\forall v\in D_{\nu},
 \]
 so that, setting $v=u$, we get 
 \begin{equation}
 \label{eq-eneq}
 2F_{\nu}(u)-\f{p-2}{p}\|u\|_p^p=-\omega\|u\|_2^2.
 \end{equation}
 Therefore, by point $(iii)$ in Proposition \ref{prop:Classification} 
 \begin{equation*}
 -\omega=\f{2\F_{\nu}(\mu)-\f{p-2}{p}\|u\|_{p}^{p}}{\mu}<2\mu^{-1}\F_{\nu}(\mu)<\mu^{-1}\inf_{v\in \D_{\nu}^\mu}Q_{\nu}(v)=-\omega_{\nu},
 \end{equation*}
 entailing the thesis.
\end{proof}

\begin{proof}[Proof of Theorem \ref{ex-gs-mu}]
Let $u_{n}$ be a minimizing sequence of mass $\mu$ for $F_{\nu}$. Then by Lemma \ref{minimseq2} there exists $u\in \D_{\nu}\setminus H^{1}(\Rd)$ such that, up to subsequences, all the limits in \eqref{lim-1} hold and, for any $\la\geq 4\nu^{2}$ and given the decomposition $u_{n}=\phi_{n,\la}+q_{n}\G_{\la,\nu}$, also the limits in \eqref{lim-2} hold. Set $m:=\|u\|_{2}^{2}$. By weak lower semicontinuity of the norm, $m\leq \mu$. Since $q\neq 0$, it follows that $m\neq 0$. Suppose by contradiction that $0<m<\mu$. Since $u_{n}\deb u$ in $L^{2}(\Rd)$, then $\|u_{n}-u\|_{2}^{2}=\mu-m+o(1)$, as $n\to+\infty$. On the one hand, since $p>2$ and $\f{\mu}{\|u_{n}-u\|_{2}^{2}}>1$,
\begin{equation*}
\F_{\nu}(\mu)\leq F_{\nu}\left(\f{\sqrt{\mu}}{\|u_{n}-u\|_{2}}(u_{n}-u)\right)<\f{\mu}{\|u_{n}-u\|_{2}^{2}} F_{\nu}(u_{n}-u),
\end{equation*}
thus
\begin{equation}
\label{first-ineq-exgs}
\liminf_{n} F_{\nu}(u_{n}-u)\geq \f{\mu-m}{\mu} \F_{\nu}(\mu).
\end{equation}
On the other hand,
\begin{equation*}
\F_{\nu}(\mu)\leq F_\nu\left(\sqrt{\f{\mu}{m}}u\right)<\f{\mu}{m}F_{\nu}(u),
\end{equation*}
hence 
\begin{equation}
\label{sec-ineq-exgs}
F_{\nu}(u)>\f{m}{\mu}\F_{\nu}(\mu).
\end{equation}
Moreover, there results that
\begin{equation}
\label{Q-un-u}
Q_{\nu}(u_{n}-u)=Q_{\nu}(u_{n})-Q_{\nu}(u)+o(1),\quad n\to+\infty
\end{equation}
and
\begin{equation}
\label{np-un-u}
\|u_{n}-u\|_{p}^{p}=\|u_{n}\|_{p}^{p}-\|u\|_{p}^{p}+o(1), \quad n \to +\infty,
\end{equation}
hence
\begin{equation}
\label{BL-lemma}
F_{\nu}(u_{n}-u)=F_{\nu}(u_{n})-F_{\nu}(u)+o(1),\quad n\to+\infty.
\end{equation}

In particular, \eqref{Q-un-u} is a consequence of the fact that $u_{n}\deb u$, $\phi_{n,\la}\deb \phi_{\la}$, $\na \phi_{n,\la}\deb \na \phi_{\la}$ in $L^{2}(\R^{2})$, $q_{n}\to q$ in $\C$ and \eqref{phin-phi-Coul}.


Equation \eqref{np-un-u} follows instead by Brezis-Lieb Lemma \cite{Brezis-Lieb-83}, since $(u_{n})_{n}$ is bounded in $L^{p}(\Rd)$ and $u_{n}\to u$ a.e. in $\Rd$.

By combining \eqref{first-ineq-exgs}, \eqref{sec-ineq-exgs} and \eqref{BL-lemma}, we obtain
\begin{equation*}
\F_{\nu}(\mu)=\liminf_{n}F_{\nu}(u_{n})= \liminf_{n} F_{\nu}(u_{n}-u) +F_{\nu}(u)>\F_{\nu}(\mu),
\end{equation*}
that is a contradiction, thus $m=\mu$.

In order to conclude the proof of the existence part, it is left to prove that 
\begin{equation*}
\F_{\nu}(\mu)\leq \liminf_{n} F_{\nu}(u_{n}),
\end{equation*}
in particular that $u_{n}\to u$ in $L^{p}(\Rd)$ and $\Big\|\f{\phi_{n,\la}-\phi_{\la}}{\sqrt{|x|}}\Big\|_{2}\to 0$ as $n\to +\infty$. By applying \eqref{GN-stand}, we have that for every $r>2$ 
\begin{equation*}
\|u_{n}-u\|_{r}^{r}\leq 2^{r-1}\left(K_{r}\|\na\phi_{n,\la}-\na\phi_{\la}\|_{2}^{r-2}\|\phi_{n,\la}-\phi_{\la}\|_2^2+\|\G_{\la,\nu}\|_{r}^{r}|q_{n}-q|^{r}\right),
\end{equation*}
hence $u_{n}\to u$ in $L^{r}(\Rd)$ since $\|\na\phi_{n,\la}-\na\phi_{\la}\|_{2}$ is bounded, $\phi_{n,\la}\to \phi_{\la}$ strongly in $L^{2}(\Rd)$ and $q_{n}\to q$ in $\C$: this is valid in particular for $r=p$. Finally,
\begin{equation*}
\begin{split}
\left\|\f{\phi_{n,\la}-\phi_{\la}}{\sqrt{|x|}}\right\|_{2}^{2}&=\int_{B_{1}}\f{|\phi_{n,\la}-\phi_{\la}|^{2}}{|x|}\,\ud x+\int_{\Rd\setminus B_{1}}\f{|\phi_{n,\la}-\phi_{\la}|^{2}}{|x|}\,\ud x\\
&\leq\left(\int_{B_{1}}\f{1}{|x|^{\f{p}{p-1}}}\,\ud x\right)^{1-\f{1}{p}}\|\phi_{n,\la}-\phi_{\la}\|_{2p}^{2}+\|\phi_{n,\la}-\phi_{\la}\|_{2}^{2}\rightarrow 0,\quad n\to+\infty,
\end{split}
\end{equation*}
since $\phi_{n,\la}\to \phi_\la$ strongly both in $L^{2p}(\Rd)$ and in $L^{2}(\Rd)$ and
\begin{equation*}
\int_{B_{1}}\f{1}{|x|^{\f{p}{p-1}}}\,\ud x=2\pi\int_{0}^{1}\f{1}{\rho^{\f{1}{p-1}}}\,\ud x<+\infty.
\end{equation*}

We are ready now to prove the properties of the ground states at mass $\mu$. We observe that, by Lemma \ref{GSareAM}, every ground state $u$ at mass $\mu$ is a minimizer of the action at frequency $\omega>\omega_{\nu}$. Moreover, by Theorem \ref{ex-min-act}, $u$ presents a logarithmic singularity at the origin, is positive, radially symmetric and decreasing along the radial direction, up to gauge invariance, and this concludes the proof.
\end{proof}

\appendix

\section{Self-adjoint extensions of $2d$ Schr\"odinger operator with Coulomb potential}
\label{app:self-adj}

In this appendix we prove Proposition \ref{prop:Classification} using the Kre\u{\i}n-Vi\v{s}ik-Birman self-adjoint extension theory. Since it will be needed to distinguish among a self-adjoint operator and its quadratic form, we will make use of the following notation. If $A$ is a densely defined symmetric operator, $\mathcal{D}(A)$ is the operator domain, $A u$ denotes the action of $A$ on $u \in \mathcal{D}(A)$; $\mathcal{D}[A]$ denotes the domain of the quadratic form associated to $A$, whose action is denoted by $A[u,v]$ for $u,v \in \mathcal{D}[A]$.

\subsection{Unitary equivalent problem.}

To simplify the analysis, we exploit the spherical symmetry of the potential and the isomorphism given by the unitary operator
\begin{equation}\label{eq:UnitaryPolar}
	\begin{array}{ccccc}
		U & : & L^2(\mathbb{R}^2,\ud x \ud y) & \longrightarrow & \ell^2(\mathbb{Z},L^2(\mathbb{R}^+, \ud r)) \\
		& & f & \longmapsto & (\widehat{\varphi}_k(r))_k
	\end{array}
\end{equation}
where for $k \in \mathbb{Z}$
\begin{equation*}
	\widehat{\varphi}_k(r) \;:=\; \sqrt{\frac{r}{2 \pi}} \int_0^{2 \pi} e^{\ii 2 \pi k \vartheta} f(r, \vartheta) \, \ud \vartheta
\end{equation*}
and $r,\vartheta \in \mathbb{R}^+ \times [0,2\pi)$ are the polar coordinates on the plane
\begin{equation*}
	\begin{cases}
		x \; = \; r \, \cos \vartheta \\
		y \; = \; r \, \sin \vartheta
	\end{cases} \, .
\end{equation*}
The transformed Hamiltonian has the form
\begin{equation*}
	U H U^* \;=\; \bigoplus_{k \in \mathbb{Z}} \left[-\frac{\ud^2}{\ud r^2} + \left( (2 \pi k)^2 - \frac{1}{4} \right)\frac{1}{r^2}+\frac{\nu}{r} \right] \, .
\end{equation*}
A simple application of Weyl's limit circle/limit point criterion shows that the only component which is not essentially self-adjoint is the one corresponding to $k=0$. It is also trivial to see that the operator \eqref{eq:symm_op} is \emph{bounded from below}. We call $S$ the $s$-wave operator, that is
\begin{equation*}
	S\;:=\; -\frac{\ud^2}{\ud r^2} -\frac{1}{4} \frac{1}{r^2} + \frac{\nu}{r} \, .
\end{equation*}

\subsection{Kernel of the shifted operator}

 \label{subsec:Kernel}
The first ingredient needed for the application of the Kre\u{\i}n-Vi\v{s}ik-Birman extension scheme is the characterisation of the kernel of the operator $S^*$, i.e. to find all the solution to the differential problem
\begin{equation}\label{eq:KernelEq}
	\left( -\frac{\ud^2}{\ud r^2} -\frac{1}{4 r^2} + \frac{\nu}{r} +\lambda \right) u(r) \;=\; 0
\end{equation}
in $L^2(\mathbb{R}^+)$. Through the change of variables $r \mapsto \xi = 2 r \sqrt{\lambda}$, and looking for a solution in the form $u(r)=\phi(2 r \sqrt{\lambda})=\phi(\xi)$, \eqref{eq:KernelEq} reduces to the Whittaker's equation \cite[Eq. 13.1.31]{Abramowitz-Stegun-1964}
\begin{equation}
	\frac{\ud^2 \phi}{\ud \xi^2} + \left[-\frac{1}{4}-\frac{\nu}{2 \sqrt{\lambda}} \frac{1}{\xi}+ \frac{1}{4 \xi^2} \right] \phi \;=\; 0 \,.
\end{equation}
A pair of linearly independent solutions is given, for $\lambda > 0$, by
\begin{eqnarray}
	\Phi_{\nu,\lambda}(r)\!\!\!&:=& \!\!\!\frac{1}{\sqrt{2 \pi}} \Gamma\big({\textstyle \frac{1}{2}+\frac{\nu}{2\sqrt{\lambda}}}\big)\sqrt{r} e^{-\sqrt{\lambda} r} U_{\frac{1}{2}+\frac{\nu}{2 \sqrt{\lambda}},1}(2 \sqrt{\lambda} r)\, ,\label{eq:Phi} \\
	F_{\nu,\lambda}(r) \!\!\!&:=&  \!\!\!{\frac{\sqrt{2 \pi}}{ \Gamma^2\big({\textstyle \frac{1}{2}+\frac{\nu}{2\sqrt{\lambda}}}\big)}}\sqrt{r} e^{-\sqrt{\lambda} r} M_{\frac{1}{2}+\frac{\nu}{2 \sqrt{\lambda}},1}(2 \sqrt{\lambda} r)	\nonumber
\end{eqnarray}
where $M$ and $U$ are \emph{Kummer} and \emph{Tricomi} functions defined in \cite[Eq. 13.1.2, 13.1.6]{Abramowitz-Stegun-1964} and the large-distance asymptotics are, to leading order, given by \cite[13.5.1 - 13.5.2]{Abramowitz-Stegun-1964}
\begin{eqnarray}
	\Phi_{\nu,\lambda}(r) \!\!\!&=&\!\!\! \frac{1}{2 \sqrt{\pi\lambda}}{ \Gamma\left({\textstyle \frac{1}{2}+\frac{\nu}{2\sqrt{\lambda}}}\right)} \big(2 r \sqrt{\lambda} \big)^{-\frac{\nu}{2 \sqrt{\lambda}}} e^{-\sqrt{\lambda} r} \big(1+O(r^{-1}) \Big) \, , \nonumber\\
	F_{\nu,\lambda}(r) \!\!\!&=&\!\!\! \frac{\sqrt{\pi}}{\sqrt{\lambda} \, \Gamma{^2}(\frac{1}{2}+\frac{\nu}{2 \sqrt{\lambda}})} (2 \sqrt{\lambda} r)^{\frac{\nu}{2 \sqrt{\lambda}}} e^{\sqrt{\lambda} r} \big( 1 + O(r^{-1}) \big) \, ,\nonumber
\end{eqnarray}
showing that the only potentially square integrable solution is given by $\Phi_{\nu,\lambda}(r)$.

Concerning short-distance asymptotics, these are obtained from \cite[Eq. 13.1.2 - 13.1.6]{Abramowitz-Stegun-1964}
\begin{eqnarray}\label{eq:Asymptotics_Phi_zero}
	\Phi_{\nu,\lambda}(r) \!\!\!&=&\!\!\! - \sqrt{\frac{r}{2 \pi}} \bigg[\ln r+\psi\big({\textstyle\frac{1}{2}+\frac{\nu}{2 \sqrt{\lambda}} } \big)+2 \gamma + \ln( 2 \sqrt{\lambda}) + \nu r \ln r\bigg] + O(r^{3/2})	\, , \\
\label{eq:AsymptFzero}
	F_{\nu,\lambda}(r) \!\!\!&=&\!\!\! {\frac{\sqrt{2\pi}}{\Gamma^2(\frac{1}{2}+\frac{\nu}{2 \sqrt{\lambda}})}}\sqrt{r} \big(1+ \nu r\big) + O(r^{5/2}) \, ,
\end{eqnarray}
here $\psi(z)=\Gamma'(z)/\Gamma(z)$ is the digamma function and $\gamma \sim 0.577$ is the Euler-Mascheroni constant. Asymptotics \eqref{eq:Asymptotics_Phi_zero} shows that, in fact, $\Phi_{\nu,\lambda} \in L^2(\mathbb{R}^+)$ and we can summarise the result proved in this subsection as:

\begin{lemma}
	The operator \eqref{eq:symm_op} has deficiency index 1 and $\ker (S^*+\lambda\mathbbm{1})=\mathrm{span} \{\Phi_{\nu,\lambda}\}$ with $\Phi_{\nu,\lambda}$ defined in \eqref{eq:Phi}.
\end{lemma}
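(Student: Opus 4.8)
The plan is to read the statement directly off the explicit fundamental system $\{\Phi_{\nu,\lambda},F_{\nu,\lambda}\}$ of the second-order ODE \eqref{eq:KernelEq} constructed above, deciding the square-integrability of each solution separately at the two endpoints $r=0$ and $r=+\infty$. Since \eqref{eq:KernelEq} is a linear second-order ordinary differential equation on $\mathbb{R}^+$, its classical solution space is exactly two-dimensional and spanned by $\Phi_{\nu,\lambda}$ and $F_{\nu,\lambda}$ of \eqref{eq:Phi}; hence $\ker(S^*+\lambda\mathbbm{1})$ consists precisely of those combinations $a\,\Phi_{\nu,\lambda}+b\,F_{\nu,\lambda}$ that belong to $L^2(\mathbb{R}^+)$, and the task reduces to a Weyl endpoint analysis.

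At $+\infty$ the large-distance asymptotics displayed above give $F_{\nu,\lambda}(r)\sim (2\sqrt{\lambda}\,r)^{\nu/(2\sqrt{\lambda})}e^{\sqrt{\lambda}\,r}$, which grows exponentially and is therefore not square-integrable near infinity, whereas $\Phi_{\nu,\lambda}(r)\sim (2\sqrt{\lambda}\,r)^{-\nu/(2\sqrt{\lambda})}e^{-\sqrt{\lambda}\,r}$ decays exponentially and lies in $L^2$ near infinity; thus $+\infty$ is in the limit-point case and any globally $L^2$ solution must have $b=0$. At the origin, the short-distance asymptotics \eqref{eq:Asymptotics_Phi_zero}--\eqref{eq:AsymptFzero} give $\Phi_{\nu,\lambda}(r)=O(\sqrt{r}\,|\ln r|)$ and $F_{\nu,\lambda}(r)=O(\sqrt{r})$ as $r\to0^+$; since both $\int_0^1 r(\ln r)^2\,\ud r$ and $\int_0^1 r\,\ud r$ are finite, each solution is square-integrable near $0$, so $0$ is in the limit-circle case.

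Combining the two endpoints, the unique (up to a scalar) solution of \eqref{eq:KernelEq} lying in $L^2(\mathbb{R}^+)$ is $\Phi_{\nu,\lambda}$, so $\ker(S^*+\lambda\mathbbm{1})=\mathrm{span}\{\Phi_{\nu,\lambda}\}$ is one-dimensional. By Weyl's limit-point/limit-circle alternative for Sturm--Liouville operators, the combination of a limit-point endpoint at $+\infty$ with a limit-circle endpoint at $0$ forces deficiency indices $(1,1)$, i.e. deficiency index $1$, consistently with $S$ being symmetric and bounded from below. The only genuinely delicate point is the integrability at the origin: one must use that the apparently singular leading term of $\Phi_{\nu,\lambda}$ is $\sqrt{r}\,\ln r$ rather than a negative power of $r$, so that $|\Phi_{\nu,\lambda}|^2\sim r(\ln r)^2$ stays integrable there; everything else is routine bookkeeping on the asymptotics already recorded.
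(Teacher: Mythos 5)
Your proposal is correct and follows essentially the same route as the paper: both arguments read the result off the explicit fundamental system $\{\Phi_{\nu,\lambda},F_{\nu,\lambda}\}$, discarding $F_{\nu,\lambda}$ by its exponential growth at infinity and confirming $\Phi_{\nu,\lambda}\in L^2(\mathbb{R}^+)$ from the $\sqrt{r}\ln r$ behaviour at the origin. Your explicit limit-point/limit-circle packaging of the deficiency-index conclusion is just a formalization of what the paper does implicitly (it invokes Weyl's criterion earlier for the $k\neq 0$ sectors and here simply counts the $L^2$ solutions), so the two proofs coincide in substance.
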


\subsection{Domain of the adjoint}
As a consequence of the method of variation of constants, to solve non-homogeneous ODEs we know the following. If we define the Wronskian of $\Phi_{\nu,\lambda}$ and $F_{\nu,\lambda}$ as
\begin{equation}
	W_{r}(\Phi_{\nu,\lambda},F_{\nu,\lambda}) \;:=\; \det \begin{pmatrix}
		\Phi_{\nu,\lambda}(r) & F_{\nu,\lambda}(r) \\
		\Phi'_{\nu,\lambda}(r) & F'_{\nu,\lambda}(r)
	\end{pmatrix} \;=\; \frac{1}{\Gamma(\frac{1}{2}+\frac{\nu}{2 \sqrt{\lambda}})}=:W_{\nu,\lambda}
\end{equation}
and the \emph{two-point Green's function} as
\begin{equation}\label{eq:Green}
	\G_{\lambda,\nu}(r,\rho) \;:=\; \frac{1}{W_{\nu,\lambda}}\begin{cases}
		\Phi_{\nu,\lambda}(r) F_{\nu,\lambda}(\rho) \qquad 0< \rho < r, \\
		\\
		F_{\nu,\lambda}(r) \Phi_{\nu,\lambda} (\rho) \qquad r < \rho < +\infty
	\end{cases} \, .
\end{equation}
 
We have
\begin{lemma}~ 
	\begin{enumerate}
		\item If $\lambda \in (-\infty,-\nu^2) $, the operator $R_{\G_{\lambda,\nu}}$ whose integral kernel is given by \eqref{eq:Green} is bounded, self-adjoint and everywhere defined.
		\item For any $g \in L^2(\mathbb{R}^+)$, the short-distance asymptotics of $(R_{\G_{\lambda,\nu}}g)(r)$ is
				\begin{equation}\label{eq:AsymptRG}
			(R_{\G_{\lambda,\nu}} g)(r) \;=\; \sqrt{2 \pi r} \langle \Phi_{\nu,\lambda}, g \rangle_{L^2(\mathbb{R}^+)} + o(r^{3/2} \ln r) \, .
		\end{equation}
		\item $R_{\G_{\lambda,\nu}}$ is the resolvent at $\lambda$ of a certain self-adjoint extension of $S$, called $S_D$: $(S_D+\lambda)^{-1}=R_{\G_{\lambda,\nu}}$.
	\end{enumerate}
	\begin{proof}
		(1) $\G_{\lambda,\nu}(r,\rho)$ is well-defined for $\lambda \in \mathbb{C}\setminus \mathbb{R}$ using as determination of square root the one with positive real part. This implies exponential decay of $\Phi_{\nu,\lambda}$ at large distances which is a condition we are going to use in the proof. If $\nu>0$ $\G_{\lambda,\nu}(r,\rho)$ is well defined for all $\lambda >0$; if $\nu < 0$ one has to take care of the poles of the Gamma function occurring for negative integers of its argument. The explicit expression \eqref{eq:Green} guarantees that a sufficient condition for $\G_{\lambda,\nu}(r,\rho)$ to be well defined at real $\lambda$, even in the case $\nu<0$, is $\lambda > \nu^2$.
		
		 The integral kernel of the operator $R_{\G_{\lambda,\nu}}$ splits into the sum of four integral operators with kernels given by
		\[
			\begin{split}
				\G_{\lambda,\nu}^{++}(r,\rho)\;&:=\; \G_{\lambda,\nu}(r,\rho) \mathbbm{1}_{(1,+\infty)}(r) \mathbbm{1}_{(1,+\infty)} (\rho) \\
				\G_{\lambda,\nu}^{+-}(r,\rho)\;&:=\; \G_{\lambda,\nu}(r,\rho) \mathbbm{1}_{(1,+\infty)}(r) \mathbbm{1}_{(0,1)} (\rho) \\
				\G_{\lambda,\nu}^{-+}(r,\rho)\;&:=\; \G_{\lambda,\nu}(r,\rho) \mathbbm{1}_{(0,1)}(r) \mathbbm{1}_{(1,+\infty)} (\rho) \\
				\G_{\lambda,\nu}^{++}(r,\rho)\;&:=\; \G_{\lambda,\nu}(r,\rho) \mathbbm{1}_{(0,1)}(r) \mathbbm{1}_{(0,1)} (\rho) \\				
			\end{split}
		\]
		where $\mathbbm{1}_J$ denotes the characteristic function supported on the interval $J \subset \mathbb{R}^+$. 
		
		For each $G^{LM}_{\nu,\lambda}(r,\rho)$ with $L,M \in \{+,-\}$ we can derive a point-wise estimate in $(r,\rho)$ from the large and short distance asymptotics of $\Phi_{\nu,\lambda}$ and $F_{\nu,\lambda}$. These are
		\[
			\begin{split}
				|\G_{\lambda,\nu}^{++}(r,\rho)| \; &\lesssim \; \Big({\textstyle \big(\frac{\rho}{r} \big)^{\frac{\nu}{2 \sqrt{\lambda}}}+\big(\frac{r}{\rho} \big)^{\frac{\nu}{2 \sqrt{\lambda}}}} \Big) e^{-\sqrt{\lambda} |r- \rho|}\mathbbm{1}_{(1,+\infty)}(r) \mathbbm{1}_{(1,+\infty)} (\rho) \\
			|\G_{\lambda,\nu}^{+-}(r,\rho)| \; & \lesssim \; \sqrt{\rho} r^{-\frac{\nu}{2 \sqrt{\lambda}}} e^{-\sqrt{\lambda} r}\mathbbm{1}_{(1,+\infty)}(r) \mathbbm{1}_{(0,1)} (\rho) \\
			|\G_{\lambda,\nu}^{-+}(r,\rho)| \; & \lesssim \; \sqrt{r} \rho^{-\frac{\nu}{2 \sqrt{\lambda}}} e^{-\sqrt{\lambda} \rho}\mathbbm{1}_{(0,1)}(r) \mathbbm{1}_{(1,+\infty)} (\rho) \\
			|\G_{\lambda,\nu}^{--}(r,\rho)| \; & \lesssim \; \big(\sqrt{\rho} \ln r + \sqrt{r} \ln \rho \big)\mathbbm{1}_{(0,1)}(r) \mathbbm{1}_{(0,1)} (\rho
			\end{split}
		\]
		The last three estimates show that the kernels $G^{+-}_{\nu,\lambda}(r,\rho)$, $G^{-+}_{\nu,\lambda}(r,\rho)$ and $G^{--}_{\nu,\lambda}(r,\rho)$ are in $L^2(\mathbb{R}^+\times \mathbb{R}^+,\ud r \ud \rho)$ and therefore the corresponding integral operator are Hilbert-Schmidt and, in particular, bounded. A standard Schur test and the first estimate allows to conclude that the integral operator with kernel $G^{++}_{\nu,\lambda}(r,\rho)$ is bounded and thus the overall boundedness of $R_{\G_{\lambda,\nu}}$. 
		
		If $\lambda \in \mathbb{R}$ the operator is bounded and symmetric (as its integral kernel is symmetric) and therefore also self-adjoint.
		
		(2) To estimate $(R_{\G_{\lambda,\nu}}g)(r)$ we use the explicit expression of $\G_{\lambda,\nu}$ together with the fact that $\Phi_{\nu,\lambda} \in L^2(\mathbb{R}^+)$. This yields
		\[
			\begin{split}
				(R_{\G_{\lambda,\nu}}g)(r) \;&=\; \frac{1}{W_{\nu,\lambda}} \Big[\Phi_{\nu,\lambda}(r) \int_0^r F_{\nu,\lambda}(\rho) g(\rho) \, \ud \rho + F_{\nu,\lambda}(r) \int_r^{+\infty} \Phi_{\nu,\lambda}(\rho) g(\rho) \, \ud \rho \Big] \\
				&=\frac{1}{W_{\nu,\lambda}} F_{\nu,\lambda}(r) \int_0^{+\infty} \Phi_{\nu,\lambda}(\rho) g(\rho) \, \ud \rho + \\
				& \quad +\frac{1}{W_{\nu,\lambda}} \Big[\Phi_{\nu,\lambda}(r) \int_0^r F_{\nu,\lambda}(\rho) \, g(\rho)\, \ud \rho -F_{\nu,\lambda}(r) \int_0^r\Phi_{\nu,\lambda}(\rho) \, g(\rho) \, \ud \rho \Big] 
			\end{split}
		\]
		
		We can now use asymptotic expansion \eqref{eq:AsymptFzero} to get as $r \downarrow 0$
		\[
			\begin{split}
				\frac{1}{W_{\nu,\lambda}} F_{\nu,\lambda}(r) \langle \Phi_{\nu,\lambda}, g \rangle_{L^2(\mathbb{R}^+)} \;&=\; \frac{\sqrt{2 \pi r}}{W_{\nu,\lambda} \Gamma(\frac{1}{2}+\frac{\nu}{2 \sqrt{\lambda}})} \langle \Phi_{\nu,\lambda}, g \rangle_{L^2(\mathbb{R}^+)} + O(r^{3/2}) \\
				&=\sqrt{2 \pi r} \langle \Phi_{\nu,\lambda},g \rangle + O(r^{3/2}) \, .
			\end{split}
		\]
		Using Schwarz inequality and asymtptotics \eqref{eq:AsymptFzero}-\eqref{eq:Asymptotics_Phi_zero} we can estimate the other term:
		\[
			\begin{split}
				\big|\Phi_{\nu,\lambda}(r)\big| \int_0^r \big|F_{\nu,\lambda}(\rho)\big| \big|g(\rho)\big| \, \ud \rho\; &\lesssim \; \sqrt{r} \ln r \Vert F_{\nu,\lambda} \Vert_{L^2(0,r)} \Vert g \Vert_{L^2(0,r)} \\
				&\sqrt{r} \ln r \Vert \sqrt{\rho} \Vert_{L^2(0,r)} o(1) \;=\; o(r^{3/2} \ln r) \, ;
			\end{split}
		\]
		\[
			\begin{split}
				\big|F_{\nu,\lambda}(r)\big| \int_0^r \big| \Phi_{\nu,\lambda}(\rho)\big| \big|g(\rho) \big| \, \ud \rho \; &\lesssim \; \sqrt{r} \Vert \sqrt{\rho} \ln \rho \Vert_{L^2(0,r)} \Vert g \Vert_{L^2(0,r)} = o(r^{3/2} \ln r) \, .
			\end{split}
		\]
		Since $O(r^{3/2})=o(r^{3/2} \ln r)$, this completes the proof of (2).
		
		(3) To see that $R_{\G_{\lambda,\nu}}$ inverts a self-adjoint extension of $S+\lambda$ it is sufficient to notice that for every $g \in C^\infty_0(\mathbb{R}^+)$ we have $R_{\G_{\lambda,\nu}} (S+\lambda)g=(S+\lambda)R_{\G_{\lambda,\nu}}g=g$ and since $R_{\G_{\lambda,\nu}}$ on $C^\infty_0(\mathbb{R}^+)$ extends to a unique self-adjoint operator on $L^2(\mathbb{R}^+)$ then it is the inverse of a densely defined self-adjoint operator that extends $(S+\lambda)$.
	\end{proof}
\end{lemma}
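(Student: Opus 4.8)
The plan is to establish the three claims in turn, using throughout the short-distance asymptotics \eqref{eq:Asymptotics_Phi_zero}--\eqref{eq:AsymptFzero}, the large-distance asymptotics recorded above, and the explicit Wronskian $W_{\nu,\lambda}=\Gamma\big(\tfrac{1}{2}+\tfrac{\nu}{2\sqrt\lambda}\big)^{-1}$, working in the regime $\lambda>\nu^2$ in which $\Phi_{\nu,\lambda}$ decays exponentially and $\G_{\lambda,\nu}(r,\rho)$ is finite. For part (1), I would split the kernel \eqref{eq:Green} into the four pieces $\G^{LM}_{\lambda,\nu}$, $L,M\in\{+,-\}$, obtained by inserting the indicators of $(0,1)$ and $(1,+\infty)$ in each variable. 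For the three pieces $\G^{+-}_{\lambda,\nu}$, $\G^{-+}_{\lambda,\nu}$, $\G^{--}_{\lambda,\nu}$ carrying at least one variable near the origin, the bounds $\Phi_{\nu,\lambda}(\rho)=O(\sqrt\rho\,|\ln\rho|)$, $F_{\nu,\lambda}(\rho)=O(\sqrt\rho)$ near $0$ together with the exponential/power decay of the partner factor at infinity yield pointwise estimates that are square integrable over $\R^+\times\R^+$; the associated operators are therefore Hilbert--Schmidt, hence bounded. The piece $\G^{++}_{\lambda,\nu}$, supported on $\{r,\rho>1\}$, behaves like $\big((\rho/r)^{\frac{\nu}{2\sqrt\lambda}}+(r/\rho)^{\frac{\nu}{2\sqrt\lambda}}\big)e^{-\sqrt\lambda|r-\rho|}$ and is not square integrable; for it I would instead run a Schur test, bounding $\sup_r\int_1^\infty|\G^{++}_{\lambda,\nu}(r,\rho)|\,\ud\rho$ and the symmetric quantity, both finite because the exponential factor dominates the algebraic prefactors on $\{r,\rho>1\}$. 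Boundedness of all four pieces gives boundedness of $R_{\G_{\lambda,\nu}}$; since its kernel is real and symmetric the operator is symmetric, and being everywhere defined and bounded it is self-adjoint.

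For part (2), I would write $(R_{\G_{\lambda,\nu}}g)(r)=\tfrac{1}{W_{\nu,\lambda}}\big[\Phi_{\nu,\lambda}(r)\int_0^r F_{\nu,\lambda}\,g\,\ud\rho+F_{\nu,\lambda}(r)\int_r^{+\infty}\Phi_{\nu,\lambda}\,g\,\ud\rho\big]$ and add and subtract $F_{\nu,\lambda}(r)\int_0^r\Phi_{\nu,\lambda}\,g\,\ud\rho$ so as to complete the tail integral into the full inner product $\langle\Phi_{\nu,\lambda},g\rangle_{L^2(\R^+)}$. The leading contribution $\tfrac{1}{W_{\nu,\lambda}}F_{\nu,\lambda}(r)\langle\Phi_{\nu,\lambda},g\rangle$ reduces, after inserting \eqref{eq:AsymptFzero} and the value of $W_{\nu,\lambda}$, to $\sqrt{2\pi r}\,\langle\Phi_{\nu,\lambda},g\rangle_{L^2(\R^+)}+O(r^{3/2})$. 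The two remaining terms I would control by Cauchy--Schwarz on $(0,r)$: with $\Phi_{\nu,\lambda}(r)=O(\sqrt r\,|\ln r|)$, $F_{\nu,\lambda}(r)=O(\sqrt r)$, and the norms $\|F_{\nu,\lambda}\|_{L^2(0,r)}=O(r)$, $\|\Phi_{\nu,\lambda}\|_{L^2(0,r)}=O(r\,|\ln r|)$, $\|g\|_{L^2(0,r)}=o(1)$ as $r\downarrow0$, each term is $o(r^{3/2}\ln r)$. Since $O(r^{3/2})=o(r^{3/2}\ln r)$, the asymptotics \eqref{eq:AsymptRG} follow.

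For part (3), a variation-of-constants computation shows that $R_{\G_{\lambda,\nu}}(S+\lambda)g=(S+\lambda)R_{\G_{\lambda,\nu}}g=g$ for all $g\in C_0^\infty(\R^+)$, so $R_{\G_{\lambda,\nu}}$ inverts $S+\lambda$ on the minimal domain; in particular $C_0^\infty(\R^+)\subseteq\mathrm{ran}(R_{\G_{\lambda,\nu}})$, whence the range is dense. A bounded self-adjoint operator with dense range is injective, so $A:=R_{\G_{\lambda,\nu}}^{-1}$ is densely defined and self-adjoint; setting $S_D:=A-\lambda$ gives a self-adjoint extension of $S$ (it agrees with $S$ on $C_0^\infty(\R^+)$) with $(S_D+\lambda)^{-1}=R_{\G_{\lambda,\nu}}$. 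The one genuinely delicate point is the boundedness of the non-Hilbert--Schmidt piece $\G^{++}_{\lambda,\nu}$ in part (1): it escapes the $L^2(\R^+\times\R^+)$ estimate used for the other three kernels and forces the uniform-in-each-variable $L^1$ bounds of the Schur test, where one must check that $e^{-\sqrt\lambda|r-\rho|}$ dominates the algebraic prefactors uniformly; the error bookkeeping in part (2) is a milder, secondary source of care.
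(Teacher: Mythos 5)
Your proposal is correct and follows essentially the same route as the paper's proof: the same four-way splitting of the kernel with Hilbert--Schmidt bounds on the three pieces touching the origin and a Schur test for $\G^{++}_{\lambda,\nu}$, the same add-and-subtract completion of the tail integral with Cauchy--Schwarz control of the remainders in part (2), and the same verification of the resolvent identity on $C^\infty_0(\mathbb{R}^+)$ in part (3). The only difference is that you spell out more explicitly than the paper why $R_{\G_{\lambda,\nu}}$ inverts a self-adjoint extension (dense range $\Rightarrow$ injectivity $\Rightarrow$ self-adjoint inverse), which is a welcome clarification of the paper's terser closing argument.
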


In particular, defining $\Psi_{\nu, \lambda}:=R_{\G_{\lambda,\nu}} \Phi_{\nu,\lambda}$, this Lemma implies that
\begin{equation}\label{eq:AsymptoticPsi}
	\Psi_{\nu,\lambda}(r)\;=\;(R_{\G_{\lambda,\nu}} \Phi_{\nu,\lambda})(r) \;=\; \sqrt{2 \pi r} \Vert \Phi_{\nu,\lambda}\Vert_{L^2(\mathbb{R}^+)}^2 +  o(r^{3/2} \ln r)
\end{equation}

With these results at hand, by Kre\u{\i}n-Vi\v{s}ik-Birman characterisation of $\mathcal{D}(S^*)$ \cite[Theorem 1]{GMO-KVB2017} we know that a generic function $g \in \mathcal{D}(S^*)$ has the following structure 
\begin{equation}\label{eq:FunctionAdjoint}
	g\;=\; c_0 \Phi_{\nu,\lambda}+c_1 \Psi_{\nu,\lambda} + h, \qquad c_0,c_1 \in \mathbb{C} \, , \, \, h \in \mathcal{D}(\overline{S}) \, .
\end{equation}

To compute its asymptotic expansion as $r \downarrow 0$ we need some information on $h$. The operator $S$ corresponds to the operator $L_{\beta,\alpha}$ in \cite[Sec 3.2]{Derezinski-Richard-2017} for $\beta=\nu$ and $\alpha=0$. Thus, as stated in \cite[Proposition 3.1.]{Derezinski-Richard-2018}, if $h \in \mathcal{D}(\overline{S})$ then $h,h'$ are continuous on $\mathbb{R}^+$ and displays the following asymptotics as $r \downarrow 0$:
\begin{equation*}
	h(r)\;=\;o(r^{3/2}\ln r) \, , \qquad h'(r) \;=\; o(r^{1/2} \ln r) \, .
\end{equation*}

We can therefore compute the asymptotics of $g \in \mathcal{D}(S^*)$ as:
\begin{equation}\label{eq:Asymtptgr}
	g(r) \;=\; \sqrt{\frac{r}{2 \pi}} \left\{-\ln r c_0 + \left[ 2 \pi \Vert \Phi_{\nu,\lambda} \Vert_{L^2(\mathbb{R}^+)}^2 c_1 - \left(\psi\big({\textstyle\frac{1}{2}+\frac{\nu}{2 \sqrt{\lambda}}}\big)+2 \gamma+\ln (2 \sqrt{\lambda}) \right) c_0  \right] + O(r \ln r)\right\}
\end{equation}

\subsection{Self-adjoint extensions of the radial operator.} 
Kre\u{\i}n-Vi\v{s}ik-Birman extension theory \cite[Theorem 5]{GMO-KVB2017}, applied to the present case of deficiency index one, states that self-adjoint extensions of $S$ correspond to restrictions of $S^*$ to subspaces of $\mathcal{D}(S^*)$ that, in terms of formula \eqref{eq:FunctionAdjoint}, are identified by the condition
\begin{equation}\label{eq:BetaCondition}
	c_1 \;= \; \beta c_0 \, \qquad \text{for some $\beta \in \mathbb{R} \cup \{\infty\}$} \, .
\end{equation}
Conventionally, the extension parametrised by $\beta=\infty$ has domain defined by the condition $c_0=0$. We now characterise the domain of self-adjointness in a more convenient way.

We start by defining
\begin{eqnarray}
	g_0&:=& -\lim_{r \downarrow 0} \frac{1}{\sqrt{r} \ln r} g(r) \, , \nonumber\\
	g_1&:=& \lim_{r \downarrow 0} \frac{1}{\sqrt{r}}\Big\{g(r)+g_0 \sqrt{r} \ln r \Big\} \nonumber\, 
\end{eqnarray}
that means rewriting the asymptotics of $g(r) \in \mathcal{D}(S^*)$ as
\begin{equation}
	g(r)=-g_0 \sqrt{r} \ln r + g_1 \sqrt{r} + O(r^{3/2} \ln r)
\end{equation}

Let us now denote with $S_\beta$ the extension of $S$ characterised by the condition \eqref{eq:BetaCondition} for a given $\beta$. Using \eqref{eq:Asymtptgr} and \eqref{eq:BetaCondition} for  $g \in \mathcal{D}(S_\beta)$ one gets
\begin{eqnarray}
	g_0\!\!\!&=&\!\!\!\frac{c_0}{\sqrt{2\pi}} \, , \nonumber\\
	g_1\!\!\!&=&\!\!\!\bigg\{ - \frac{1}{\sqrt{2 \pi} } \Big[ \psi\big({\textstyle \frac{1}{2}+\frac{\nu}{2 \sqrt{\lambda}}}\big)+2 \gamma + \ln(2 \sqrt{\lambda}) \Big]+ \sqrt{2 \pi} \Vert \Phi_{\nu,\lambda}\Vert^2_{L^2(\mathbb{R}^+)}\beta \bigg\} c_0 \, . \nonumber
\end{eqnarray}
Upon defining $\alpha$ as
\begin{equation}\label{eq:AlphaBeta}
	\alpha_\beta \;:=\; \Vert \Phi_{\nu,\lambda} \Vert^2_{L^2(\mathbb{R}^+)} \beta - \frac{1}{2 \pi} \left( \psi\big({\textstyle \frac{1}{2}+\frac{\nu}{2 \sqrt{\lambda}}} \big)+2\gamma+\ln(2 \sqrt{\lambda}) \right)
\end{equation}
we can rewrite the domain of the self-adjoint $S_\beta$ as
\begin{equation}\label{eq:DomainSalpha}
	\mathcal{D}(S_\beta) \;=\; \big\{g \in \mathcal{D}(S^*) \; | \; g_1 = 2 \pi \alpha_\beta g_0 \big\} \, .
\end{equation}	
Since the relation $\alpha_\beta \leftrightarrow \beta$ is one-to-one, we can denote as $S_\alpha$ the self-adjoint extension whose domain is given by \eqref{eq:DomainSalpha}. Thus, explicitly
\begin{equation*}
	\mathcal{D}(S_\alpha)\;=\; \big\{ g \in \mathcal{D}(S^*) \, | \, g_1 \;=\; 2 \pi \alpha g_0 \big\} \, .
\end{equation*}

Using this last characterisation of the domain it is immediate to identify $S_D=S_F$ \cite[Proposition 1]{2010-Duclos-2D-Delta-Coulomb}.

We now move to the general classification of the quadratic forms associated to self-adjoint extensions of $S+\lambda$. Owing to \cite[Theorem 7]{GMO-KVB2017}, for all extensions but the Friedrichs we have
\begin{equation*}
	\mathcal{D}[S_\alpha+\lambda]\;=\; \mathcal{D}[S_F] \dot{+} \mathrm{span} \{\Phi_{\nu,\lambda} \} \, .
\end{equation*}
\begin{equation*}
	(S_{\alpha}+\lambda)\big[f+q \Phi_{\nu,\lambda},f'+q' \Phi_{\nu,\lambda} \big] \;=\; (S_\alpha+\lambda)[f,f']+\beta_\alpha \overline{q} q' \Vert \Phi_{\nu,\lambda} \Vert^2_{L^2(\mathbb{R}^+)} \, .
\end{equation*}
Calling $g=f+q \Phi_{\nu,\lambda}$ and $g'=f'+q' \Phi_{\nu,\lambda}$ and using the definition of $\theta_{\nu,\lambda}$  (see \eqref{eq:thetaDef}) one has
\begin{equation*}
	\begin{split}
		S_\alpha[g,g']\;&=\; (S_\alpha+\lambda)[g,g']-\lambda \langle g,g' \rangle \\
		&=\;(S_F+\lambda)[f,f']+\beta_\alpha \overline{q} q' \Vert \Phi_{\nu,\lambda} \Vert^2_{L^2(\mathbb{R}^+)} - \lambda\langle g, g' \rangle \\
		&=\;S_F[f,f']+\lambda \Big(\langle f,f'\rangle-\langle g,g' \rangle \Big)+ \big(\alpha+\theta_{\nu,\lambda} \big) \overline{q}q'
	\end{split}
\end{equation*}
whence the result
\begin{equation}\label{eq:RadialQuadraticForm}
	S_\alpha[g]\;=\; S_F[f]+\lambda \big(\Vert f \Vert^2_{L^2(\mathbb{R}^+)}-\Vert g \Vert_{L^2(\mathbb{R}^+)}^2 \big)+ (\alpha+\theta_{\nu,\lambda} ) |q|^2 \, .
\end{equation}

\subsection{Reconstruction of the 2D problem}

Now we put together the analysis of the previous subsection to prove the main results.

\begin{proof}[Proof of Proposition \ref{prop:Classification}]
Since for $k \neq 0$ the operator in the $k$-wave sector is essentially self-adjoint we can limit our discussion to the $s$-wave sector.

Suppose $g \in \mathcal{D}(H_{\nu,\alpha})$, then the $s$-wave component of $g$ has the form
\[
	\hat g_0(r) \;=\; \frac{\sqrt{r}}{\sqrt{2 \pi}} \int_0^{2 \pi} g(r,\vartheta) \ud \vartheta = \sqrt{r} \hat f_0(r)+ q \Phi_{\lambda,\nu}(r)
\]
Let us compute its asymptotics as $r \downarrow 0$ noting first that $f(0)= \frac{1}{\sqrt{2\pi}}\hat{f}_0(0)$. Then, by direct comparison with \eqref{eq:FunctionAdjoint} and using the asymptotic expansion \eqref{eq:AsymptoticPsi} one has
\[
	f(0)\;=\; \frac{1}{\sqrt{2 \pi r}} \hat{f}_0(0) \;=\; \Vert \Phi_{\nu,\lambda} \Vert^2_{L^2(\mathbb{R}^+)} c_1
\]
Imposing now the self-adjointness condition \eqref{eq:BetaCondition} with $c_0=q$ one has
\[
	f(0)\;=\; (\alpha+\theta_{\nu,\lambda}) q
\]
which is precisely the condition \eqref{eq:Domain2DExtension-OP}.
%

(ii) Since deficiency index of the minimal operator is one, all self-adjoint extensions are bounded from below (this follows from the general result \cite[Corollary 1]{GMO-KVB2017}). Therefore, \cite[Theorem 7]{GMO-KVB2017} applies as a classification result for the quadratic form associated to each self-adjoint extension. Indeed, for $\alpha \neq \infty$,
\[
	\mathcal{D}[H_{\nu,\alpha}] \;=\; \mathcal{D}[H_F] \dot{+} \mathrm{span}_{\mathbb{C}} \{\G_{\lambda,\nu}\} \, .
\]
It is sufficient to show that $\mathcal{D}[H_F]=H^1(\mathbb{R}^2)$. This follows from Kato inequality \cite[Theorem 1.3]{2002-Bouzouina} and KLMN Theorem \cite[Theorem X.17]{rs2}.  Indeed, $1/|x|$ potential is infinitesimally form-bounded with respect to the free Laplacian (see Lemma \ref{lem:Quasi-Hardy}). Concerning the evaluation of the quadratic form \eqref{eq:FormValue}, it follows from \eqref{eq:RadialQuadraticForm}.

By definition, the energy form is half of the value of the quadratic form: $Q(g)=\frac{1}{2}H_\alpha[g,g]$, which concludes the proof.
\end{proof}

Last, before concluding the appendix, let us give a couple of interesting remarks.

First, the classification of Proposition \ref{prop:Classification} is particularly useful to characterize eigenvalues of the self-adjoint extension determined by the parameter $\alpha$. Since for $\ell \neq 0$, all operators have the same spectrum, it is of interest to compute eigenvalues for the $\ell=0$ sector only (the so-called ``$s$-wave sector). Analogously to \cite{GM-hydrogenoid-2018,GM-2017-DC-EV} we have the following corollary.
\begin{corollary}\label{cor:Eigenvalues}
	$E$ is an eigenvalue of $H_{\nu,\alpha}$ (in the $s$-wave sector) if and only if $-E>0$ is a solution to 
	\begin{equation}\label{eq:eigenvaluesCondition}
		\alpha+\theta_{-E,\nu}=0 \, .
	\end{equation}
	For fixed $\alpha \in \mathbb{R}$ and $\nu<0$, \eqref{eq:eigenvaluesCondition} has infinitely many solutions $E_n<0$ accumulating at $E=0$.
	
	In particular, the eigenvalues $E_n^{(F)}$ of the Friedrichs extension $H_{F}$ exist if and only if $\nu<0$ and are given by
	\begin{equation}\label{eq:FriedrichsEV}
		E_n^{(F)} \;=\; -\frac{\nu^2}{(1+2n)^2} \, , \qquad n\in \mathbb{N}.
	\end{equation}
\end{corollary}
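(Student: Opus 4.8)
The plan is to reduce the eigenvalue problem to the $s$-wave channel and then convert the boundary condition defining $\mathcal{D}(S_\alpha)$ into the transcendental equation \eqref{eq:eigenvaluesCondition}. Since the statement concerns the $s$-wave sector and, for $k\neq 0$, the radial operators in $U H_{\nu,\alpha} U^*$ are essentially self-adjoint and hence $\alpha$-independent, it suffices to work with the $s$-wave extension $S_\alpha$ and to look for $E\in\R$ and $g_E\in\mathcal{D}(S_\alpha)$ with $S^* g_E = E g_E$.

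First I would solve the radial ODE $(S^*+\lambda)g=0$ with $\lambda=-E$. For $E>0$ the large-distance asymptotics of the two independent solutions leave no square-integrable combination, so there are no positive eigenvalues; for $E<0$, Subsection \ref{subsec:Kernel} shows that the unique (up to scalars) $L^2(\R^+)$ solution is $\Phi_{\nu,\lambda}$ from \eqref{eq:Phi}. It then remains to impose membership in $\mathcal{D}(S_\alpha)$, i.e. the condition $g_1=2\pi\alpha g_0$ of \eqref{eq:DomainSalpha}. Normalising $g_E=\sqrt{2\pi}\,\Phi_{\nu,\lambda}$ and reading \eqref{eq:Asymptotics_Phi_zero} together with the definition \eqref{eq:thetaDef}, one obtains $g_E(r)=-\sqrt r\ln r-2\pi\theta_{\la,\nu}\sqrt r+o(r^{3/2}\ln r)$, whence $g_0=1$ and $g_1=-2\pi\theta_{\la,\nu}$; the boundary condition then reads exactly $\alpha+\theta_{-E,\nu}=0$, which is \eqref{eq:eigenvaluesCondition}.

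For the counting statement I would analyse $\lambda\mapsto\theta_{\la,\nu}$ on the whole half-line $(0,+\infty)$, not only on $(\nu^2,+\infty)$ as in Lemma \ref{lem:proptheta}. Writing $a(\lambda)=\tfrac12+\tfrac{\nu}{2\sqrt\lambda}$, which increases from $-\infty$ (as $\lambda\to0^+$, using $\nu<0$) to $\tfrac12$ (as $\lambda\to+\infty$), the digamma term in \eqref{eq:thetaDef} makes $\theta_{\la,\nu}$ develop a vertical asymptote each time $a(\lambda)$ crosses a non-positive integer $-n$, that is at $\lambda_n:=\nu^2/(2n+1)^2$. The derivative computed in Lemma \ref{lem:proptheta} stays positive on every interval between consecutive asymptotes (the trigamma function being positive where defined), so on $(\nu^2,+\infty)$ one recovers the single root of Lemma \ref{lem:proptheta}, while on each bounded interval $(\lambda_{n+1},\lambda_n)$ the function $\theta_{\la,\nu}$ sweeps monotonically from $-\infty$ to $+\infty$ and $\alpha+\theta_{\la,\nu}=0$ has exactly one root $\lambda^{(n)}$. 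Since $\lambda_n\to0$, the eigenvalues $E_n=-\lambda^{(n)}$ are negative and accumulate at $0$, and this whole structure exists precisely when $\nu<0$, because only then does $a(\lambda)$ reach the poles of $\psi$.

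Finally, the Friedrichs extension is the one selected by the regular condition $g_0=0$ (the case $\alpha=\infty$ in \eqref{eq:BetaCondition}): for $g_E=\Phi_{\nu,\lambda}$ this forces the $\sqrt r\ln r$ coefficient to vanish, which by \eqref{eq:Asymptotics_Phi_zero} happens exactly at the poles of $\theta_{\la,\nu}$, i.e. where $\tfrac12+\tfrac{\nu}{2\sqrt{-E}}=-n$, giving \eqref{eq:FriedrichsEV}; these values are attained only for $\nu<0$, explaining the stated dichotomy. I expect the main obstacle to be this last pair of points: rigorously controlling the global behaviour of $\theta_{\la,\nu}$ across its poles (monotonicity and the $\pm\infty$ limits on each subinterval) and justifying that at the pole values $\Phi_{\nu,\lambda}$ degenerates into a genuine logarithm-free $L^2$ eigenfunction of $H_F$ rather than a spurious solution.
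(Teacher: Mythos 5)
Your proposal is correct and follows essentially the same route as the paper's proof: reduction to the $s$-wave sector, identification of $\Phi_{\nu,-E}$ as the unique square-integrable solution, translation of the boundary condition $g_1=2\pi\alpha g_0$ into $\alpha+\theta_{-E,\nu}=0$, and reading the Friedrichs eigenvalues off the poles of the digamma function (which exist only for $\nu<0$). If anything, your interval-by-interval analysis of $\theta_{\lambda,\nu}$ between consecutive poles $\lambda_n=\nu^2/(2n+1)^2$ (monotonicity from the positivity of the trigamma function plus the intermediate value theorem) is more complete than the paper's argument, which asserts but does not explicitly prove that \eqref{eq:eigenvaluesCondition} has infinitely many solutions accumulating at $E=0$.
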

\begin{proof}
	Computation of eigenvalues for the self-adjoint extension parametrised by $\alpha \in \mathbb{R} \cup \{\infty\}$ amounts at solving the differential equation
	\[
	S^* g_E \;=\; E g_E
	\]
	with the boundary condition $g_1 = 2 \pi \alpha g_0$. For $E>0$, the space of solutions of the differential problem $S^*g=Eg$ do not admit square integrable solutions, thus there are no positive eigenvalues. For $E<0$, analysis of subsection \ref{subsec:Kernel}, identifies as only square integrable solution the function $g_E=\Phi_{\nu,-E}$ with $\Phi_{\nu,\lambda}$ defined in \eqref{eq:Phi}.
	
	It remains to check for which values of $E$, $g_E$ satisfies the boundary condition $g_1=2 \pi \alpha g_0$. Notice that
	\[
	g_E(r) \;=\;- \sqrt{r} \ln r -2 \pi \theta_{\nu,-E} \sqrt{r} + o(r^{3/2} \ln r)
	\]
	whence, boundary condition reads precisely \eqref{eq:eigenvaluesCondition}.
	
	For $\alpha=0$ the condition is equivalent to $\theta_{\nu,-E}=\infty$, that is equivalent to look for the positive values of $E$ for which $\theta_{\nu,-E}$ has vertical asymptotes. Since Gamma function has no zeros, only vertical asymptotes of $\theta_{\nu,-E}$ comes from the vertical asymptotes of the digamma function, that are
	\[
	\frac{1}{2}+\frac{\nu}{2 \sqrt{-E}} \;=\; -n \, , \qquad n \in \mathbb{N}
	\]
	which is equivalent to
	\[
	\frac{\nu}{\sqrt{-E}}=-2n-1 \, .
	\]
	This last equation has solution if and only if $\nu < 0$, and in such case it yields \eqref{eq:FriedrichsEV}
\end{proof}

Second, it may be of interest to relate the results with Coulomb-like potential to the ones without Coulomb potential. This is recovered as a `zero-charge limit', $\nu \to 0$. For the linear operator, one has the following remark.

\begin{remark}
	If $\nu \to 0$ the following limits hold true:
	\begin{equation*}
		\Gamma\big({\textstyle \frac{1}{2}+\frac{\nu}{2 \sqrt{\lambda}}} \big) \longrightarrow \Gamma(1/2)=\sqrt{\pi}
	\end{equation*}
	\begin{equation*}
		\lim_{\nu \to 0} U_{\frac{1}{2}+\frac{\nu}{2 \sqrt{\lambda}},1}(2 \sqrt{\lambda} r) = \frac{K_0(\sqrt{\lambda} r)}{\sqrt{\pi}} e^{\sqrt{\lambda} r}
	\end{equation*}
	and $\psi(1/2)=-2 \ln 2 - \gamma$. 
	
	One sees that Proposition \ref{prop:Classification} reduces to the analogous one for the standard $-\Delta+\text{``}\delta\text{''}$ on $L^2(\mathbb{R}^2)$ in the limit $\nu \to 0$ (see, e.g. Section 1.2 in \cite{Adami-Boni-Carlone-Tentarelli-2022}).
\end{remark}

\section{Ground states, minimizers of the action and bound states}
\label{app-gbstates}

In this section, we show that both ground states of \eqref{en-fan} and minimizers of the action \eqref{S-nu-om} are bound states, i.e. they satisfy \eqref{bcond} and \eqref{EL-eq}.

First, we note that, if $u$ is either a ground state of \eqref{en-fan} of mass $\mu$ or a minimizer of the action \eqref{S-nu-om} at frequency $\omega$, then it satisfies, for any fixed $\la>\nu^{2}\mathds{1}_{(-\infty,0)}$,
\begin{multline}
\label{ELdelta}
\langle\na\chi_{\la} ,\na\phi_{\la}\rangle+\nu\langle\chi_{\la}, |x|^{-1}\phi_{\la}\rangle+\la\langle\chi_{\la} ,\phi_{\la}\rangle+(\omega-\la) \langle \chi,u\rangle+\bar{\xi}q\left(\alpha+\theta_{\la,\nu}\right)-\langle\chi,|u|^{p-2}u\rangle=0\\
\forall\chi=\chi_{\la}+\xi\G_{\la,\nu}\in D_{\nu}.
\end{multline}
If $u$ is ground state of \eqref{en-fan} of mass $\mu$, then $\omega=\mu^{-1}(\|u\|_p^p-Q_{\nu}(u))$. Now, letting $\xi=0$ in \eqref{ELdelta}, so that $\chi=\chi_\la\in H^1(\Rd)$, there results
\begin{equation}
\langle\na\chi ,\na\phi_{\la}\rangle+\langle\chi ,\nu|x|^{-1}\phi_{\la}+\omega\phi_{\la}+(\omega-\la)q\G_{\la,\nu}-|u|^{p-2}u\rangle=0\qquad\forall\chi\in H^{1}(\Rd).
\end{equation}
Hence, as $\nu|x|^{-1}\phi_{\la}+\omega\phi_{\la}+(\omega-\la)q\G_{\la}-|u|^{p-2}u\in L^2(\Rd)$, $\phi_{\la}\in H^{2}(\Rd)$ and, by density, 
\begin{equation}
\label{EL2}
-\lap\phi_{\la}+\nu|x|^{-1}\phi_{\la}+\omega\phi_{\la}+(\omega-\la)q\G_{\la}-|u|^{p-2}u=0\qquad\text{in}\quad L^2(\Rd), 
\end{equation}
which is equivalent to \eqref{EL-eq}. On the other hand, letting $\chi_{\la}=0$ and $\xi=1$ in \eqref{ELdelta}, so that $\chi=\G_{\la,\nu}$, there results
\begin{equation}
 \langle \G_{\la,\nu}, (\omega-\la)u-|u|^{p-2}u\rangle+q\left(\alpha+\theta_{\la,\nu}\right)=0.
\end{equation} 
Finally, using \eqref{EL2}, we obtain
\begin{equation}
 \langle \G_{\la,\nu}, (-\lap+\nu|x|^{-1}+\la)\phi_{\la}\rangle=q\left(\alpha+\theta_\la\right),
\end{equation}
which is equivalent to $\phi_{\la}(0)=q\left(\alpha+\theta_{\la,\nu}\right)$, thus also \eqref{bcond} is satisfied.


\def\cprime{$'$}

\end{document}